\numberwithin{equation}{section}
\numberwithin{table}{section}
\theoremstyle{plain}
\newtheorem{thm}{Theorem}[section]
\newtheorem{thma}{Theorem}
\newtheorem{prop}[thm]{Proposition}
\newtheorem{lem}[thm]{Lemma}
\newtheorem{cor}[thm]{Corollary}
\theoremstyle{definition}
\newtheorem{dfn}[thm]{Definition}
\newtheorem{cons}[thm]{Construction}
\theoremstyle{remark}
\newtheorem{rmk}[thm]{Remark}
\newtheorem{notation}[thm]{Notation}
\DeclareMathOperator{\Ob}{Ob}
\DeclareMathOperator{\Hom}{Hom}
\tikzset{dot/.style={circle,fill=black,thick,inner sep=0pt,minimum size=1mm,draw}}
\tikzset{arrow/.style={semithick,>=stealth',shorten >=1pt,shorten <=1pt}}
\tikzset{equal/.style={kant,double distance=2pt}}
\DeclareMathAlphabet{\mathpzc}{OT1}{pzc}{m}{it}
\newcommand{\N}{\mathbb N}
\newcommand{\Z}{\mathbb Z}
\newcommand{\R}{\mathbb R}
\newcommand{\Cat}{\mathscr{C}}
\newcommand{\tCat}{\widetilde{\mathscr{C}}}
\newcommand{\Dcat}{\mathscr{D}}
\newcommand{\Ecat}{\mathscr{E}}
\newcommand{\Acat}{\mathscr{A}}
\newcommand{\Bcat}{\mathscr{B}}
\newcommand{\Tcat}{\mathscr{T}}
\newcommand{\Lcat}{\mathscr{L}}
\newcommand{\tLcat}{\widetilde{\mathscr{L}}}
\newcommand{\Pcat}{\mathscr{P}}
\newcommand{\cof}{\hookrightarrow}
\newcommand{\fib}{\twoheadrightarrow}
\newcommand{\adj}{\leftrightarrows}
\newcommand{\Fat}{\mathpzc{Fat}}
\newcommand{\Fatoc}{\mathpzc{Fat}^{\mathpzc{oc}}}
\newcommand{\Fatad}{\mathpzc{Fat}^{\mathpzc{ad}}}
\newcommand{\Eoc}{\mathpzc{EFat}^{\mathpzc{oc}}}
\newcommand{\Ead}{\mathpzc{EFat}^{\mathpzc{ad}}}
\newcommand{\Fatocg}{\mathpzc{Fat}^{\mathpzc{oc}}_{S}}
\newcommand{\Fatadg}{\mathpzc{Fat}^{\mathpzc{ad}}_{S}}
\newcommand{\Eocg}{\mathpzc{EFat}^{\mathpzc{oc}}_{S}}
\newcommand{\Eadg}{\mathpzc{EFat}^{\mathpzc{ad}}_{S}}
\newcommand{\Modgpq}{\mathrm{Mod}(S)}
\newcommand{\Mod}{\mathrm{Mod}}
\newcommand{\Diff}{\mathrm{Diff}}
\newcommand{\Homeo}{\mathrm{Homeo}}
\newcommand{\Sg}{S}
\newcommand{\bwgraphs}{\mathscr{BW}-Graphs}
\newcommand{\md}{\mathrm{deg^m}}
\newcommand{\bwd}{\mathrm{deg^{bw}}}
\newcommand{\Fatng}{\mathpzc{Fat}^{n}_{S}}
\newcommand{\FatNg}{\mathpzc{Fat}^{N}_{S}}
\newcommand{\bL}{\eth\mathscr{L}}
\newcommand{\btL}{\eth\widetilde{\mathscr{L}}}
\newcommand{\Cquasi}{\mathrm{C}^{\mathrm{quasi}}}
\newcommand{\dquasi}{\mathrm{d}^{\mathrm{quasi}}}
\newcommand{\dep}{\mathrm{depth}}
\newcommand{\ie}{\scalebox{1.5}{$\mathpzc{e}$}}
\newcommand{\Arc}{\text{Arc}}
\newcommand{\cM}{\mathcal{M}}
\newcommand{\cT}{\mathcal{T}}
\title{Comparing fat graph models of Moduli space}
\author{Daniela Egas Santander}
\begin{document}

\begin{abstract}
Godin introduced the categories of open closed fat graphs $\Fatoc$ and admissible fat graphs $\Fatad$ as models of the mapping class group of open closed cobordism. We use the contractibility of the arc complex to give a new proof of Godin's result that $\Fatad$ is a model of the mapping class group of open-closed cobordisms.  
Similarly, Costello introduced a chain complex of black and white graphs $\bwgraphs$, as  a rational homological model of mapping class groups.  We use the result on admissible fat graphs to give a new integral proof of Costellos's result that $\bwgraphs$ is a homological model of mapping class groups.  The nature of this proof also provides a direct connection between both models which were previously only known to be abstractly equivalent. 
Furthermore, we endow Godin's model with a composition structure which models composition of cobordisms along their boundary and we use the connection between both models to give $\bwgraphs$ a composition structure and show that $\bwgraphs$ are actually a model for the open-closed cobordism category.
\end{abstract}

\maketitle
\setcounter{tocdepth}{2}
\tableofcontents

\section{Introduction} 
The moduli space of Riemann surfaces is a classical object in mathematics as it is related to the classification of Riemann surfaces and families of such. Moreover, when considering Riemann surfaces with boundary, this moduli space is also related to mathematical physics, as it plays a central role in the study of two dimensional field theories.
There are several constructions of moduli space coming from different areas of mathematics: algebraic geometry, hyperbolic geometry, conformal geometry among others, with a rich interplay among them.  However, this space is not yet fully understood.  

There are many different combinatorial models of moduli space.  Such models can give us further insight on the homotopy type of moduli space via direct calculations.  For example, there are explicit computations of the unstable homology of moduli space using combinatorial models \cite{abhau, Godinunstable}.
On the other hand, combinatorial models of moduli space (and compactifications of it) also play a central role in some constructions of two dimensional field theories and in particular in the construction of string operations 
\cite{costellotcft, kaufmann_penner, godin, kaufmann_I, kaufman_II, poirier, kaufman_oc, wahlwesterland, poirierrounds, wahluniversal, drummond_poirier_rounds}.

Although all these combinatorial models are abstractly equivalent, since they all model the homotopy type of moduli space, a direct connection between them is in general not understood. In this paper, we give a direct connection between two such models: the admissible fat graph model due to Godin and the black and white graph model due to Costello.  Furthermore, we endow both with a notion of composition or gluing which models composition in moduli space, which is given by sewing surfaces along their boundaries.  As far as we know, these are the first models in terms of fat graphs to include a notion of composition along closed boundary components.

In this introduction we start by first recalling the notion of the moduli space of Riemann surfaces and open-closed two dimensional cobordisms.  Then we briefly describe fat graphs as in the work of Penner, Harer, Igusa and Godin.  Afterwards, we describe Costello's model and its relation to Godin's model.  Finally, we describe how one can endow both models with a notion of composition which corresponds to composition in moduli space.

\subsection{Cobordisms and their moduli}

The study of surfaces and their structure has been a central theme in many areas of mathematics. One approach to study the genus $g$ closed oriented surface $\Sigma_g$, is by the \emph{moduli space of $\Sigma_g$} which we denote $\cM_g$, which is a space that classifies all compact Riemann surfaces of genus $g$ up to complex-analytic isomorphism. We recall some the concepts involved in this field mainly following \cite{primer_mcg,hamenst_theich}. A \emph{marked metric complex structure} on $\Sigma_g$, is a tuple $(X,\varphi)$, where $X$ is a Riemann surface and $\varphi:\Sigma_g\to X$ is an orientation preserving diffeomorphism.  Two complex structures $(X,\varphi)$ and $(X',\varphi')$ are \emph{equivalent} if there is a biholomorphic map $f:X\to X'$ such that $f\circ \varphi$ and $\varphi'$ are isotopic.  As a set, the \emph{Teichm\"{u}ller space of $\Sigma_g$} which we denote $\cT_g$, is the set of all equivalence classes of marked metric complex structures.  It can be given a topology with which it is a contractible manifold of dimension $6g-6$. The \emph{mapping class group of $\Sigma_g$}, which we denote $\Mod(\Sigma_g)$, is the group of components of the topological group of orientation preserving self-diffeomorphisms of the surface i.e., $\pi_0(\Diff^+(\Sigma_g))$.  One can show that this definition is equivalent to many others namely 
\[\Mod(\Sigma_g)\cong\pi_0(\Homeo^+(\Sigma_g))\cong\Diff^+(\Sigma_g)\diagup_{\sim_i}\cong\Homeo^+(\Sigma_g)\diagup_{\sim_h}\]
where $\sim_i$ and $\sim_h$ denote the isotopy and homotopy relations respectively.  The mapping class group acts on Teichm\"{u}ller space by precomposition with the marking and the moduli space of $\Sigma_g$ is the quotient of Teichm\"{u}ller space by this action i.e., $\cM_g:=\cT_g/\Mod(\Sigma_g)$. 

These definitions can be extended to surfaces with additional structure.  We will study the case of open-closed cobordisms, which has applications in string topology and topological field theories. An \emph{open-closed cobordism} $S$ is an oriented surface with boundary together with a partition of the boundary into three parts $\partial_{in}S$, $\partial_{out}S$ and $\partial_{free}S$ and parametrizing diffeormorphisms 
\[ \begin{array}{rcl}
\partial_{in} S\to N_{in} &\phantom{bigbigspace}& \partial_{out} S\to N_{out}
\end{array} \]
where $N_{in}$ is a space with $p=p_1+p_2$ ordered connected components, $p_1$ of these components are standard circles (i.e., $S^1$'s) which parametrize the incoming closed boundaries and $p_2$ of them are unit intervals which parametrize the incoming open boundaries. Similarly, $N_{out}$ is a space with $q=q_1+q_2$ ordered connected components, $q_1$ of these components are standard circles which parametrize the outgoing closed boundaries and $q_2$ of them are unit intervals which parametrize the outgoing open boundaries.  The parametrizing diffeormorphisms additionally give an ordering of the incoming and outgoing boundary components, see Figure \ref{cobordism}.  
Each incoming boundary component $\partial_i S$ comes equipped with a collar i.e., a map from $(-\epsilon,0]\times S^1$ (if $\partial_i S$ is closed) or from $(-\epsilon,0]\times [0,1]$ (if $\partial_i S$ is open) to a neighborhood of $\partial_i S$ which restricts to the boundary parametrization.  Similarly, each outgoing boundary component $\partial_i S$ comes equipped with a collar i.e., a map from $[0,\epsilon)\times S^1$ (if $\partial_i S$ is closed) or from $[0,\epsilon)\times [0,1]$ (if $\partial_i S$ is open) to a neighborhood of $\partial_i S$ which restricts to the boundary parametrization.  Note that since the surface $\Sg$ is oriented, then up to homeomorphism, to give the parametrizing diffeomorphisms is equivalent to fixing a marked point in each component of $\partial_{in} S\cup\partial_{out} S$ and giving an ordering of these. As in the case of surfaces, $2$-dimensional open-closed cobordisms $\Sg$ and $\Sg'$ have the same \emph{topological type as open-closed cobordisms} if there is an orientation preserving homeomorphism $h:\Sg\to\Sg'$ that respects the collars.
 
\begin{figure}[h!]
  \centering
    \includegraphics[scale=0.3]{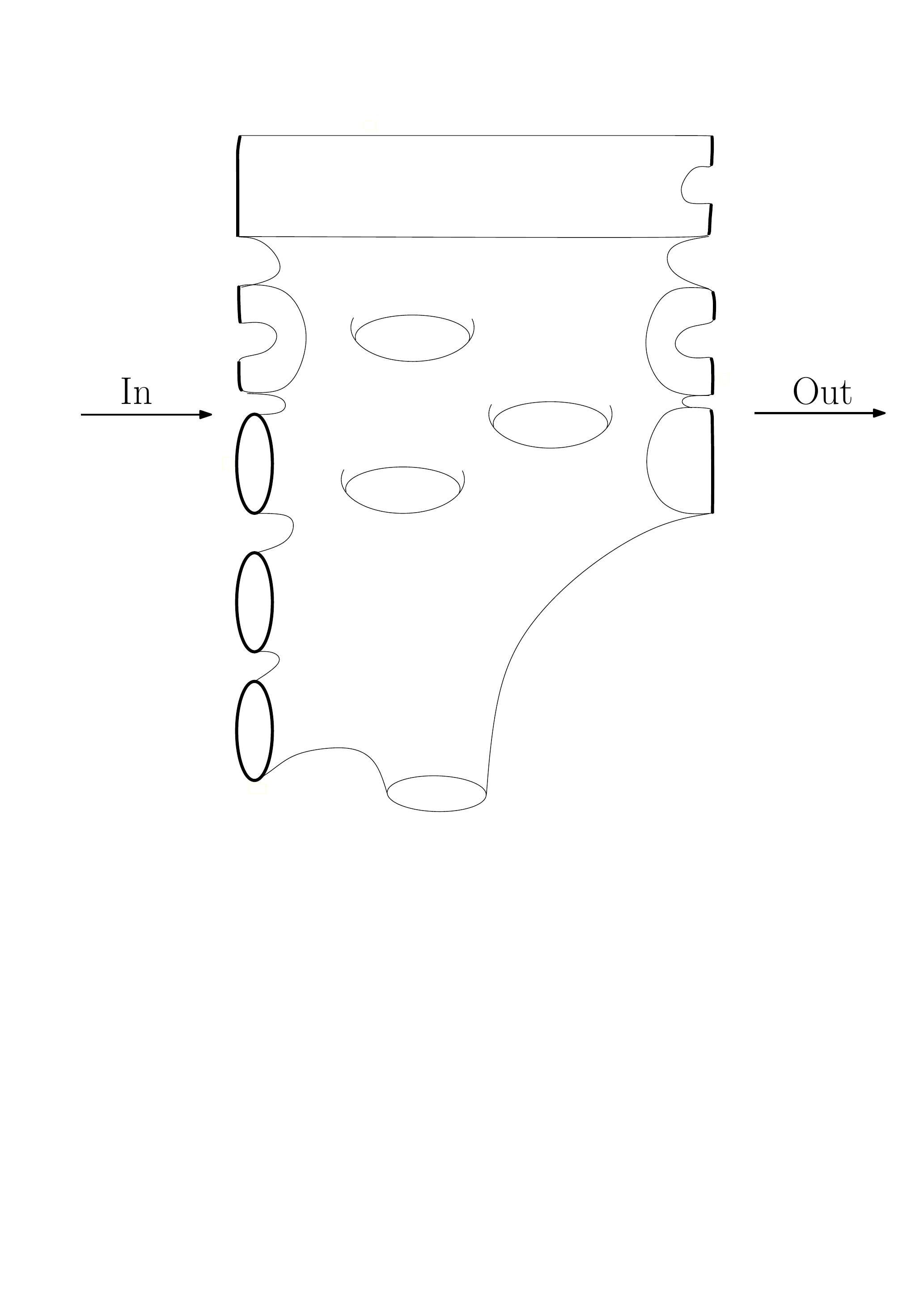}
  \caption{An open-closed cobordism whose underlying surface has genus $3$ and $8$ boundary components.  There are $3$ incoming closed boundaries and no outgoing closed boundaries.  There are $3$ incoming open boundaries and $5$ outgoing open boundaries.}
  \label{cobordism}
\end{figure}

The notions of Teichm\"{u}ller space, moduli space and mapping class groups are extended in a natural way.  More precisely, a \emph{marked metric complex structure} on $\Sg$ is a tuple $(\varphi, X)$, where $X$ is a Riemann surface with boundary parametrizations and collars and $\varphi:S\to X$ is an orientation preserving diffeomorphism that respects the collars.  Two complex structures $(X,\varphi)$ and $(X',\varphi')$ are \emph{equivalent} if there is a biholomorphic map $f:X\to X'$ that respects the collars such that $f\circ \varphi$ and $\varphi'$ are isotopic.  The \emph{Teichm\"{u}ller space of $\Sg$} which we denote $\cT_{S}$, is the space of all equivalence classes of marked metric complex structures. The \emph{mapping class group of} $\Sg$  is 
\[\Mod(\Sg):=\pi_0(\Diff^+(\Sg,\partial_{in} S\cup\partial_{out} S))\]
where $\Diff^+(\Sg,\partial_{in} S\cup\partial_{out} S)$ is the space of orientation preserving diffeomorphisms that fix the collars.  The mapping class group acts on Teichm\"{u}ller space by precomposition with the marking and $\cM_{S}:=\cT_{S}/\Mod(\Sg)$.  When there is at least one marked point in a the boundary of $\Sg$, the action of $\Mod(\Sg)$ is free and thus $\cM_{S}$ is a classifying space of $\Mod(\Sg)$ .

\subsection{Admissible fat graphs}
Informally, a \emph{fat graph} or (\emph{ribbon graph}) is a graph in which each vertex has a cyclic ordering of the edges that are attached to it, see Definition \ref{fat_definition} for a precise definition.  This cyclic ordering allows us to fatten the graph to obtain a surface.  In \cite{penner}, Penner constructs a triangulation of the decorated Teichm\"{u}ller space of surfaces with punctures, which is equivariant under the action of the mapping class group, giving a model of the decorated moduli space of punctured surfaces.  In \cite{igusahrt}, Igusa constructs a category $\mathpzc{Fat}$, with objects fat graphs whose vertices have valence greater or equal to three. He shows that this category rationally models the mapping class groups of punctured surfaces.  Following these ideas, in \cite{Godinunstable}, Godin constructs a category $\mathpzc{Fat}^{\mathpzc{b}}$ of fat graphs with leaves and shows that this category models the mapping class groups of bordered surfaces.  
In \cite{godin}, she extends this construction and defines a category $\Fatoc$, of \emph{open-closed fat graphs} which are fat graphs with labeled leaves, see Definition \ref{oc_def} for a precise definition, and shows that this category models the mapping class groups of open-closed cobordisms.  Moreover, she defines a full subcategory $\Fatad$, of \emph{admissible fat graphs}, which are a special kind of open-closed fat graphs with disjoint embedded circles corresponding to the outgoing closed boundary components, see Definition \ref{ad_def} for a precise definition, and shows that this sub-category also models the mapping class groups of open-closed cobordisms. 
However, there is a step missing in the proof of this last result which we do not know how to complete.  More precisely, Godin proves this by comparing certain fiber sequences, but a map connecting them is not explicitly constructed and we do not know how to construct such map.  
In this paper, we give a new proof of Godin's result, shown in Theorem \ref{thmA}, which is more geometric in nature, by using the contractibility of the arc complex.

\begin{thma}
\label{thmA}
The categories of open-closed fat graphs and admissible fat graphs are models for the classifying spaces of mapping class groups of open-closed cobordisms.  More specifically there is a homotopy equivalence
\[|\Fatoc|\simeq \coprod_{\Sg} \mathrm{B}\Modgpq\]
where the disjoint union runs over all topological types of open-closed cobordisms in which each connected component has at least one boundary component which is not free.  Moreover, this restricts on the subcategory of admissible fat graphs to a homotopy equivalence 
\[|\Fatad|\simeq \coprod_{\Sg} \mathrm{B}\Modgpq\]
where the disjoint union runs over all topological types of open-closed cobordisms in which each connected component has at least one boundary component which is neither free nor outgoing closed. 
\end{thma}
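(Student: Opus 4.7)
The statement splits into two equivalences which I would handle in sequence. The first, $|\Fatoc| \simeq \coprod_{\Sg} B\Mod(\Sg)$, is Godin's original theorem, whose proof I plan to follow directly: identify $|\Fatoc|$ with a space of metric open-closed fat graphs by reading simplicial coordinates as edge lengths; identify this space of metric fat graphs with $\coprod_{\Sg}\cM_{\Sg}$ via a Penner/Strebel-type decomposition adapted to the open-closed setting; and use $\cM_{\Sg}\simeq B\Mod(\Sg)$, which holds precisely when $\Mod(\Sg)$ acts freely on $\cT_{\Sg}$, i.e.\ when some boundary component is non-free. The serious new content is in the admissible statement, where the goal is to bypass the missing step in Godin's original comparison argument by means of the arc complex.

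For the admissible part, my plan is to show that the inclusion functor $\iota:\Fatad\hookrightarrow\Fatoc$, restricted to the relevant components, induces a homotopy equivalence on geometric realizations via Quillen's Theorem A. For each open-closed fat graph $\Gamma$ of topological type $\Sg$ (satisfying the boundary hypothesis), I would analyze the comma category $\iota\downarrow\Gamma$, whose objects are admissible fat graphs $\Gamma'$ equipped with an edge-collapse $\iota(\Gamma')\to\Gamma$. By the definition of admissibility, such an object amounts to choosing, for each outgoing closed boundary component of $\Sigma_\Gamma$, an embedded circle in $\Sigma_\Gamma$ parallel to that boundary, realized as a sub-graph of a refinement of $\Gamma$. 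By a standard duality this data is equivalent to a system of disjoint non-isotopic embedded arcs on $\Sg$ connecting each outgoing closed boundary to some other boundary component. The boundary hypothesis is exactly what ensures that such arc systems exist, so $\iota\downarrow\Gamma$ is non-empty; and by the theorem of Harer and Hatcher the arc complex of $\Sg$ with endpoints on the appropriate boundaries is contractible, which after identifying $|\iota\downarrow\Gamma|$ with (a subcomplex of, or a contractible deformation retract of) this arc complex gives the required contractibility and hence the homotopy equivalence $|\Fatad|\simeq|\Fatoc|$.

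The main obstacle I anticipate is making this combinatorial dictionary rigorous at the simplicial level. Concretely, one must explain how a simplex in the arc complex produces a cellular family of admissible refinements of $\Gamma$, track how edge-collapse morphisms in $\Fatad$ correspond to nested inclusions of arc systems, and verify that the nerve of $\iota\downarrow\Gamma$ is truly weakly equivalent to the full arc complex rather than only sharing its top-dimensional simplices. Each circle insertion modifies the cyclic orders at several vertices of $\Gamma$, and these modifications must be organized functorially so that the resulting simplicial map to the arc complex is both cellular and a weak equivalence. Finally, low-complexity exceptional surfaces on which the Harer/Hatcher contractibility theorem fails must be treated by hand. Once this identification is established, Quillen's Theorem A delivers $|\Fatad|\simeq|\Fatoc|$, and composition with the first equivalence yields the theorem.
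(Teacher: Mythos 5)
Your proposal takes a genuinely different route from the paper, and the admissible half has a real gap. The paper does \emph{not} compare $\Fatad$ with $\Fatoc$ via Quillen's Theorem~A, nor does it invoke Penner/Strebel or Teichm\"uller theory at any point. Instead it treats both categories in parallel and entirely combinatorially: it constructs categories $\Eocg$ and $\Eadg$ of \emph{marked} (open-closed, respectively admissible) fat graphs, shows by an explicit cut-and-paste argument plus Nielsen approximation that $\Modgpq$ acts freely and transitively on markings (so $|\Eocg|\to|\Fatocg|$ and $|\Eadg|\to|\Fatadg|$ are principal bundles), and then proves that $|\Eocg|$ and $|\Eadg|$ are contractible by identifying $\Eocg^{op}$ and $\Eadg^{op}$ with posets of filling (respectively, filling \emph{admissible}) arc systems and running a contractibility argument descending from Harer--Hatcher. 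Quillen's Theorem~A does appear, but only internally, to compare the poset of \emph{filling} arc systems against the barycentric subdivision of the whole arc complex; it is never applied to the inclusion $\Fatad\hookrightarrow\Fatoc$.

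The gap in your plan is the claim that an object of $\iota\downarrow\Gamma$ (an admissible blow-up of $\Gamma$) ``amounts to \dots a system of disjoint non-isotopic embedded arcs on $\Sg$ connecting each outgoing closed boundary to some other boundary component.'' This is much weaker than what admissibility actually translates to on the arc side. The paper's Definition~\ref{arc_admissible} shows that admissibility of the dual fat graph corresponds to \emph{three} simultaneous conditions on a filling arc system: (i) a separating subsystem isolating each outgoing closed boundary, (ii) no arc with both endpoints on the same outgoing closed boundary, and (iii) a connectivity condition on the union of arcs meeting that boundary together with auxiliary arcs. Proving this equivalence is the content of Proposition~\ref{B0_E}, and it is one of the two substantial technical inputs; your dictionary captures only a shadow of condition~(ii). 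Without the correct dictionary you cannot even set up the contractibility problem for the comma categories. Moreover, Harer--Hatcher only gives contractibility of the full arc complex $\Acat(\Sigma,V)$; what you need is contractibility of a poset of admissible refinements, which is a proper subcomplex, and this requires a separate argument. In the paper this is Proposition~\ref{B0_con}: one re-runs Hatcher's surgery flow and checks at each step that the conditions (i)--(iii) are preserved, provided the ``target'' arc $\beta$ is chosen to avoid the outgoing closed boundaries. You would need an analogue of this relative to a fixed $\Gamma$, which does not follow formally from Harer--Hatcher. In short: the high-level intuition (arc complexes, Harer--Hatcher) is shared with the paper, but the organizational scheme is different, and the two hardest pieces (the admissibility dictionary and the restricted contractibility flow) are left unproved and are misstated in your sketch.
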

We show this (in both cases) on each connected component by constructing principal $\Modgpq$-bundles $|\Eoc_S|\fib |\Fatoc_S|$ and $|\Ead_S|\fib |\Fatad_S|$ in which all spaces are finite CW-complexes and $|\Eoc_S|$ and $|\Ead_S|$ are contractible.  

The restriction to the subcategory of admissible fat graphs gives a smaller model of mapping class groups, which might permit further computations of the homology of the mapping class group.  Furthermore, the restriction to the admissible case allows us to define a composition map in terms of graphs which models composition of cobordisms as it will be explained in the last subsection of the introduction.  The admissibility condition is essential to our composition construction as it can not be extended to all open-closed fat graphs.

\subsection{Black and white graphs}

In \cite{costellotcft}, Costello shows that there is an action of the chains of the moduli space of Riemann surfaces on the Hochschild chains of any $A_{\infty}$- Frobenius algebra. The proof of this result uses a model of the chains of moduli space described in \cite{costellorg, costellotcft}.  
To build this model, Costello uses a modular space of surfaces with possibly nodal boundary and shows that the boundary of this partial compactification of moduli space is rationally equivalent to the moduli space of Riemann surfaces.  The boundary of this partial compactification has a natural CW-structure and the generators of its cellular complex are given by disks and annuli glued at the boundary.  

In their study of operations on the Hochschild complex of $A_{\infty}$- algebras with extra structure \cite{wahlwesterland}, Wahl and Westerland use a dual representation of the disks and annuli and describe this chain complex as a complex of fat graphs with two types of vertices: black vertices corresponding to the center of the disks and white vertices corresponding to the inner boundary of the annuli.  See Definitions \ref{bw_generalized_def} and \ref{bw_def} for a concrete definition of a \emph{black and white graph}.  In the Frobenius case, Wahl and Westerland recover Costello's theorem.  Moreover, they give an explicit recipe for this action, which recovers the action given by Kontsevich and Soibelman in \cite{kontsevich_soibelman} for finite dimensional $A_{\infty}$-algebras.  In genus $0$ this action recovers the  $A_{\infty}$-cyclic Deligne conjecture as described in \cite{kaufmann_schwell}.

Following the terminology of \cite{wahlwesterland}, we denote Costello's model of moduli space the \emph{chain complex of black and white graphs}, see Definition \ref{bw_cpx_def} for a concrete definition of this complex. Costello gives a geometric proof of the following theorem, giving a flow of this partial compactification of moduli space onto its boundary. 

\begin{thma}
\label{thmB}
The chain complex of black and white graphs is a (rational) model for the classifying spaces of mapping class groups of open-closed cobordisms.  More specifically there is an isomorphism
\[\mathrm{H}_*(\bwgraphs)\cong\mathrm{H}_*\left( \coprod_{\Sg}\mathrm{B}\Modgpq\right) \]
where the disjoint union runs over all topological types of open-closed cobordisms in which each connected component has at least one boundary component which is neither free nor outgoing closed. 
\end{thma}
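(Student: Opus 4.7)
The plan is to leverage Theorem \ref{thmA} by constructing a direct quasi-isomorphism between the cellular chain complex of $|\Fatad|$ and $\bwgraphs$. Since Theorem \ref{thmA} identifies the homotopy type of $|\Fatad|$ with $\coprod_{\Sg}\mathrm{B}\Modgpq$ (restricted to the appropriate topological types), such a quasi-isomorphism immediately yields the stated isomorphism on homology; moreover, working at chain level should yield the result integrally, improving on Costello's rational statement.

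The bridge between the two models is the operation of collapsing admissible circles. Given an admissible fat graph $\Gamma$, each disjoint embedded circle $C$ corresponding to an outgoing closed boundary contracts to a single vertex, which inherits a cyclic order from the cyclic order in which the edges outside $C$ meet $C$. Declaring such contracted vertices \emph{white} and the remaining vertices \emph{black} yields a black and white graph $\Phi(\Gamma)$. First I would verify that $\Phi$ extends to a chain map: collapsing an edge of $\Gamma$ not lying on an admissible circle corresponds either to an edge collapse in $\Phi(\Gamma)$ or to a change of valence of a white vertex, both of which appear in the differential of $\bwgraphs$, while collapsing an edge on an admissible circle either yields an isomorphic admissible graph (a degeneracy, sent to zero) or destroys admissibility (and must already be absent from the admissible complex). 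Second, I would filter $|\Fatad|$ by the total number of edges lying on admissible circles and compatibly filter $\bwgraphs$, so that $\Phi$ respects the filtrations, and then analyze the associated graded pieces: the fiber of $\Phi$ over a black and white graph $G$ consists of the ways of blowing up each white vertex of $G$ of valence $n$ into an admissible circle of length $n$, together with the attaching combinatorics on its disk side. A spectral sequence argument reduces the problem to showing that each such local fiber is acyclic.

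The main obstacle will be this acyclicity statement. The blow-ups of a single white vertex of valence $n$ are parametrized by cellular decompositions of a disk with $n$ marked boundary points, and one must show that the resulting cell complex is contractible with top class represented by the circle itself. I expect this to follow from the contractibility of the arc complex of a disk with marked boundary points, which is essentially the same geometric input used in the proof of Theorem \ref{thmA}: once the local fibers are identified with a complex of polygonal decompositions of a marked disk, the standard contraction of the arc complex produces the required local acyclicity, and careful bookkeeping of orientations makes the argument integral rather than rational. Combining this with the spectral sequence induced by the filtration shows that $\Phi$ is a quasi-isomorphism, and Theorem \ref{thmB} follows. The added benefit of this proof is precisely that the map $\Phi$ is constructed explicitly, providing the direct comparison between Godin's and Costello's models that is mentioned in the abstract.
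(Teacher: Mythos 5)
The overall strategy (use Theorem~\ref{thmA}, then relate $\vert\Fatad\vert$ to $\bwgraphs$ via a filtration and spectral sequence) is in the right spirit, but the central step of your plan fails for a reason the paper explicitly flags. You propose to ``verify that $\Phi$ extends to a chain map'' from the cellular chain complex of $\vert\Fatad\vert$ (in Godin's CW structure, graded by $\sum_v(\vert v\vert-3)$) to $\bwgraphs$. This cannot work because the map that collapses admissible circles to white vertices is not degree-preserving: the admissible fat graph $l_n$ (a single admissible circle with $n$ edges, $n$ labeled leaves, and one admissible leaf) has Godin degree $1$ (one $4$-valent vertex, the rest trivalent), yet it collapses to a black and white graph with a single generic white vertex of valence $n$, which has degree $n-1$. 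An admissible cycle contributes essentially nothing to the Godin degree while its image white vertex carries degree roughly equal to its valence, so there is no grading on $\bwgraphs$ for which your $\Phi$ is a chain map, and filtering both sides by the number of admissible edges does not repair this since the total degrees of the two complexes still disagree.

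The paper's fix is to abandon Godin's degree altogether and introduce a new \emph{mixed degree}
\[\md(\Gamma)=\vert E_a\vert-k+\sum_{v\in V_a\cup V_b}(\vert v\vert-3)+\sum_{v\in V_{a,*}}\max\{0,\vert v\vert-4\},\]
which rewards edges on admissible cycles so that $\md(\Gamma)$ agrees with the black and white degree of the collapsed graph, and then to build the filtration $\Fat^0\subset\Fat^1\subset\cdots\subset\Fatad$ by this mixed degree. Rather than producing a chain map from Godin's cells, the paper constructs explicit \emph{quasi-cells} $\Ecat_G$ indexed by black and white graphs $G$ (products of associahedra $\Tcat_n$ for black vertices and cyclohedron-like categories $\Lcat_n$, $\tLcat_n$ for white vertices), shows that $\vert\Fat^n\vert/\vert\Fat^{n-1}\vert$ is a wedge of $n$-spheres indexed by black and white graphs of degree $n$ (Lemmas~\ref{core_simplex}, \ref{thick_boundary}, \ref{cover}, \ref{disjoint_interior}), and identifies the connecting homomorphism with the differential of $\bwgraphs$. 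The spectral sequence of this filtration then collapses to give $H_*(\bwgraphs)\cong H_*(\vert\Fatad\vert)$. So your intuition that the comparison should run through a filtration controlled by admissible-cycle data is correct, and the acyclicity of local building blocks (associahedra and cores) is indeed the geometric input, but you are missing the key invention of a replacement degree and the corresponding quasi-cell structure; without it, the comparison map you want to write down is not even a map of chain complexes.
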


In this paper, we give a new proof of the integral version of this theorem using Theorem \ref{thmA}.  More precisely, we construct a filtration 
\[\Fatad\ldots \supset \Fat^{n+1} \supset \Fat^{n} \supset \Fat^{n-1} \ldots \Fat^{1}\supset\Fat^{0}\]
that gives a cell-like structure on $\Fatad$ where the quasi-cells are indexed by black and white graphs i.e., $\vert \Fat^{n}\vert / \vert \Fat^{n-1}\vert\cong \vee S^n$, where the wedge sum is indexed by black and white graphs of degree $n$.  

Although the admissible fat graph model and the black and white graph model are 
abstractly equivalent since they are both models for the classifying space of mapping class groups, a direct connection between them was to our knowledge, so far missing.  Besides proving Theorem \ref{thmB}, the structure of the proof gives a direct connection between these models.  Furthermore, this connection is used to define a notion of composition of cobordisms in terms of black and white graphs as it is explained below.

\subsection{Models of the open-closed cobordism category}

The (positive-boundary) open-closed cobordism category $\mathcal{OC}$ is the category enriched over chain complexes with objects pairs of natural numbers $[^{p_1}_{p_2}]\in \N\times\N$ and mapping spaces given by 
\[\Hom_{\mathcal{OC}}([^{p_1}_{p_2}],[^{q_1}_{q_2}]):=\bigoplus_S \mathrm{C}_*(\cM_S)\simeq \bigoplus_S \mathrm{C}_*(\mathrm{BMod}(S))\]
where the direct sum is taken over all cobordisms $S$ with $p_1$ incoming closed boundaries, $p_2$ incoming open boundaries, $q_1$ outgoing closed boundaries and $	q_2$ outgoing open boundaries, such that each connected component of $S$ has a boundary component which is neither free nor outgoing closed. Composition is given by sewing cobordisms along the boundary using the parametrizations.  Theorem \ref{thmA} states that admissible fat graphs model the mapping spaces of $\mathcal{OC}$.  We use the ideas of Kaufmann, Livernet and Penner in \cite{kaufmann_livernet_penner} to define a composition of admissible fat graphs which models composition in $\mathcal{OC}$.  More precisely, let $|\Fatad_S|$ denote the connected component of $|\Fatad|$ corresponding to the cobordism $S$.  Then we prove the following result. 
\begin{thma}
\label{thmC}
Let $S_1$ and $S_2$ be composable cobordisms such that the composite $S_2\circ S_1$ is an oriented cobordism in which each connected component has a boundary component which is neither free nor outgoing closed.  We construct a continuous map
\[|\Fatad_{S_2}| \times |\Fatad_{S_1}| \longrightarrow |\Fatad_{S_2 \circ S_1}|\]
which models composition on classifying spaces of mapping class groups under the equivalence of Theorem \ref{thmA}.
\end{thma}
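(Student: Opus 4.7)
The plan is to give an explicit combinatorial formula for the composition at the level of fat graphs, following the ideas of Kaufmann--Livernet--Penner, and then to lift it to the geometric realizations and identify it with the topological sewing of cobordisms. First I would describe the gluing of two admissible fat graphs $G_1\in\Fatad_{S_1}$ and $G_2\in\Fatad_{S_2}$. The outgoing open boundaries of $G_1$ and the incoming open boundaries of $G_2$ are encoded by labeled leaves, and for each matching pair I simply identify them into a single edge. The outgoing closed boundaries of $G_1$ are, by admissibility, realized as \emph{disjoint embedded} oriented cycles of $G_1$, each carrying a basepoint determined by the parametrization; the matching incoming closed boundaries of $G_2$ are boundary cycles of $G_2$, again with basepoints. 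For each such pair I glue $G_2$ onto $G_1$ by matching the basepoint and the orientation of the embedded circle in $G_1$ with those of the boundary cycle in $G_2$. Admissibility is essential here: without an embedded cycle on the $S_1$-side the attaching curve could be singular and the resulting object would fail to be a fat graph.

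The next step is to extend this combinatorial gluing to a continuous map on nerves. The key subtlety is that the embedded circle in $G_1$ and the boundary cycle in $G_2$ typically have different numbers of edges, so the identification requires a subdivision that depends on the barycentric coordinates in the simplicial complexes $|\Fatad_{S_1}|$ and $|\Fatad_{S_2}|$. I would make this precise by defining, on each product of simplices coming from chains $G_1^{(0)}\to\cdots\to G_1^{(n)}$ and $G_2^{(0)}\to\cdots\to G_2^{(m)}$, a piecewise linear map into $|\Fatad_{S_2\circ S_1}|$ that performs the gluing with a subdivision determined by the simplicial coordinates. Standard verifications then show that the construction is well-defined on isomorphism classes and compatible with face and degeneracy maps, so that it assembles into a continuous map on realizations.

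Finally, I would compare the combinatorial composition with sewing of Riemann surfaces under the equivalence of Theorem~\ref{thmA}. Recall that this equivalence comes from the principal bundles $|\Ead_S|\fib |\Fatad_S|$, whose total spaces parametrize admissible arc systems on marked surfaces. Gluing two such arc systems along matching circle boundaries produces an arc system on the sewn surface whose dual fat graph is precisely the combinatorial gluing constructed above. The main obstacle is verifying the coherence of this identification after descending by the mapping class group action; the cleanest route should be to construct an explicit one-parameter family of collar widths interpolating between geometric sewing and combinatorial gluing, and then to invoke naturality of the principal $\Modgpq$-bundles of Theorem~\ref{thmA} to conclude that the two composition maps agree up to canonical homotopy.
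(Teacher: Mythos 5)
Your high-level plan matches the paper's: glue the admissible cycles of $\Gamma_1$ onto the incoming boundary cycle sub-graphs of $\Gamma_2$ (after a re-scaling/subdivision that depends on the simplicial coordinates), join open leaves to open leaves, and then run the dual picture through arc systems and the principal bundles $|\Ead_S|\fib|\Fatad_S|$ to identify the resulting map with composition of cobordisms. You also correctly identify the two pressure points: the subdivision must depend continuously on barycentric data, and admissibility is what makes the attaching curve in $\Gamma_1$ an embedded circle rather than a singular cycle. That much is on target.

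However, several steps you wave through as ``standard verifications'' are actually where the content lives, and your proposed closing argument diverges from what is needed.

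First, continuity and well-definedness of the arc-system gluing is not routine. The paper reduces it to Kaufmann--Livernet--Penner's band construction, where weighted arcs are thickened to rectangular bands and the bands on one side of the glued circle subdivide the bands on the other (and vice versa) in a metric-dependent way. Proving that this band composition is continuous and descends to the mapping class group quotient is precisely the hard technical input imported from their work; your sketch replaces it by ``piecewise linear on products of simplices, check face and degeneracy maps,'' which is not enough because the subdivision pattern changes discontinuously in the combinatorics even though the resulting point in $|\Fatad_{S_2\circ S_1}|$ varies continuously. You need the band picture (or an equivalent device) to see why.

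Second, you do not address the key lemma that the result of gluing two filling admissible arc systems is again a filling admissible arc system (properties $(i)$--$(iii)$ of Definition~\ref{arc_admissible}, plus the filling condition, plus the weight normalization at each outgoing closed boundary). This is Lemma~\ref{gluing_closed} in the paper and requires a genuine case analysis; nothing in your proposal establishes that the glued arc system does not, say, violate the filling condition at the glued circle or exceed weight one on some arc after re-scaling.

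Third, your step identifying the combinatorial composition with sewing on $\mathrm{BMod}$ proposes a ``one-parameter family of collar widths interpolating between geometric sewing and combinatorial gluing'' followed by an appeal to naturality of the bundles. This is vague and, as far as I can see, would not close the argument: there is no natural ``collar width'' parameter living in $|\Fatad_S|$. What the paper does instead is direct and concrete: it fixes basepoints $\underline{\alpha_i}\in\underline{\Bcat}_0(S_i)$, trivializes the fibers $p_i^{-1}(\underline{\alpha_i})\cong\Mod(S_i)$, and checks by an explicit computation that the restriction of the gluing map to the product of fibers is precisely the homomorphism $\Mod(S_2)\times\Mod(S_1)\to\Mod(S_2\circ S_1)$ of equation~(\ref{gluing_Mod}). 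That fiberwise computation, not an interpolation, is the correct way to conclude, and it should replace your final paragraph.
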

The composition map is defined by scaling metric fat graphs and as a result it is not associative on the nose.  However, one can find chain models of $|\Fatad|$ on which composition is strictly associative.  An example of this will be described below.

Costello's black and white fat graphs also give a chain model for the mapping spaces in $\mathcal{OC}$.  However, since black and white graphs are born from modeling moduli spaces of surfaces with nodal boundary, they do not carry a natural notion of composition along closed boundary components.  In fact, Costello states in \cite{costellotcft} that he expects that one can not model composition via fat graphs.  However, we show here that this is in fact possible.  To do this, we use the direct connection between admissible fat graphs and black and white graphs established in the proof of Theorem \ref{thmB} to transfer Theorem \ref{thmC} to Costello's black and white model.  More precisely, let $\mathscr{BW}_S$ denote the sub-complex of $\bwgraphs$ corresponding to the cobordism $S$.  We prove the following result.

\begin{thma}
\label{thmD}
Let $S_1$ and $S_2$ be composable cobordisms such that the composite $S_2\circ S_1$ is an oriented cobordism in which each connected component has a boundary component which is neither free nor outgoing closed.  We describe a chain map
\[\circ_{BW}: \mathscr{BW}_{S_2} \otimes \mathscr{BW}_{S_1}\longrightarrow \mathscr{BW}_{S_2 \circ S_1}\]
which models composition on classifying spaces of mapping class groups.  Furthermore, composition is associative showing that $\bwgraphs$ are indeed a model of the open-closed cobordism category $\mathcal{OC}$. 
\end{thma}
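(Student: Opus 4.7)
The plan is to transfer the topological composition of Theorem \ref{thmC} along the cellular filtration built in the proof of Theorem \ref{thmB}. That proof produces a filtration $\Fatad \supset \cdots \supset \mathpzc{Fat}^{n+1} \supset \mathpzc{Fat}^n \supset \cdots$ whose $n$-th subquotient $|\mathpzc{Fat}^n|/|\mathpzc{Fat}^{n-1}|$ is a wedge of $n$-spheres indexed by black and white graphs of degree $n$, so that the associated cellular chain complex is exactly $\bwgraphs$. Hence, once the composition map of Theorem \ref{thmC} is shown to respect this filtration with additive behavior on degrees, it will descend to the sought chain map $\circ_{BW}$ on the associated graded, and $\circ_{BW}$ will automatically inherit the modeling property from its topological origin.

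First, I would verify that the scaling composition of admissible fat graphs is filtered, in the sense that it restricts to a map
\[|\mathpzc{Fat}^n_{S_2}| \times |\mathpzc{Fat}^m_{S_1}| \longrightarrow |\mathpzc{Fat}^{n+m}_{S_2 \circ S_1}|.\]
This amounts to tracking how the quasi-degree $\dquasi$ defining the filtration behaves under the gluing-by-scaling construction of Theorem \ref{thmC}: the key observation should be that the combinatorial data counting $\dquasi$ concatenate additively across the glued boundary, with the admissibility condition guaranteeing that the embedded circles of the outgoing closed boundaries of $S_1$ match up cleanly with the incoming closed boundaries of $S_2$, producing no extra contribution to the quasi-degree. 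Passing to subquotients then yields a degree-preserving map of cellular chain complexes $\mathscr{BW}_{S_2} \otimes \mathscr{BW}_{S_1} \to \mathscr{BW}_{S_2 \circ S_1}$, which is the desired $\circ_{BW}$; compatibility with differentials is automatic because the topological composition is a map of filtered CW-pairs. That $\circ_{BW}$ models composition on classifying spaces of mapping class groups then follows by combining Theorem \ref{thmC}, the chain-level identification $\mathrm{H}_*(|\Fatad_S|) \cong \mathrm{H}_*(\mathscr{BW}_S)$ established in the proof of Theorem \ref{thmB}, and naturality of these identifications with respect to filtered maps.

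For strict associativity, the point is that the two iterated composites $\circ \circ (\mathrm{id}\times \circ)$ and $\circ \circ (\circ\times \mathrm{id})$ on admissible fat graphs differ only by a canonical rescaling of metrics on the triple composite, and are therefore joined by a one-parameter filtered homotopy given by linear interpolation of the scale factors. Since filtered homotopies between filtered maps induce equal maps on the associated graded, the two induced compositions on $\bwgraphs$ coincide strictly. This rigidification of homotopy associativity into strict associativity is precisely the phenomenon alluded to in the discussion following Theorem \ref{thmC}. The main obstacle will be the first step, namely showing additivity of the quasi-degree under scaling composition on the nose. The behavior of the filtration at the newly glued region of a composite fat graph is delicate, and if additivity fails for certain boundary configurations one will have to correct the composition by a filtered homotopy before passing to the associated graded. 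Any such correction must then be chosen functorially so that the interpolation argument of the previous sentence still produces a filtered homotopy between the two triple composites, which is the only remaining subtlety.
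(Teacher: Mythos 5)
Your framing---descend from the topological composition of Theorem \ref{thmC} through the filtration of Theorem \ref{thmB} to a map on subquotients---is in the right spirit, and the paper implicitly exploits this picture. But two of your load-bearing steps are left as assertions and are not how the paper actually establishes the result. First, the mixed degree $\md$ used to define $\Fat^n$ is not obviously additive under the scaling composition: the outgoing admissible cycle of $\Gamma_1$ loses its admissible status in $\Gamma_2\circ\Gamma_1$, its edges are subdivided by the overlaid vertices of $\Gamma_2$'s incoming boundary cycle, and the resulting valences depend on the metric. You flag this yourself as ``the main obstacle,'' but you do not resolve it. The paper instead proves the stronger statement directly: the image of a product of quasi-cells $\varphi_{G_2}(\Ecat_{G_2})\times\varphi_{G_1}(\Ecat_{G_1})$ under $\circ_{\Fat}$ is exactly $\bigcup \varphi_{\lfloor G\rfloor}(\Ecat_{\lfloor G\rfloor})$ over the black and white graphs $\lfloor G\rfloor$ occurring in $G_2\circ_{BW}G_1$. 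This simultaneously yields filteredness (via Lemma \ref{cover}) \emph{and} identifies the induced map on subquotients with the explicit combinatorial $\circ_{BW}$ from \cite{wahlwesterland}; your proposal produces only an abstract map on associated graded and never matches it against Wahl--Westerland's description, although the theorem asks for the latter.

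Second, the associativity argument by filtered homotopy presupposes (a) that $\circ_{\Fat}\circ(\mathrm{id}\times\circ_{\Fat})$ and $\circ_{\Fat}\circ(\circ_{\Fat}\times\mathrm{id})$ differ only by a rescaling of the metric and (b) that linear interpolation of scale factors gives a \emph{filtered} homotopy. Neither is verified; the discrepancy between the two triple composites can involve genuine combinatorial differences (bivalent-vertex deletion, degenerate disk gluings, and the interaction of open and closed gluings), not merely a rescaling, so (a) is doubtful as stated. The paper avoids this entirely: the chain map property and associativity of $\circ_{BW}$ are not derived from $\circ_{\Fat}$ but are quoted from \cite{wahlwesterland}, where they are checked combinatorially at the level of black and white graphs; the content the paper adds is precisely the identification of $\circ_{BW}$ with the map induced by $\circ_{\Fat}$ on quasi-cells. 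If you want your filtered-homotopy argument to go through, you would have to establish (a) and (b), and also make the homotopy natural enough to be compatible with the remaining corrections you hedge about in your last sentence; as written, this is a genuine gap.
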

This composition map was first described in \cite{wahlwesterland} as part of their study of operations on the Hochschild homology of structured algebras.  They show that it is indeed a chain map and that composition is associative.  When all the outgoing boundary components of $S_1$ are open, this map restricts to the one given by Costello in \cite{costellotcft}.

In \cite{kaufmann_penner}, Kaufmann and Penner describe a different partial model of the open-closed cobordism category in terms of families of arcs embedded in surfaces.  Their construction is a partial model of $\mathcal{OC}$, because in some cases composition leaves moduli space.  On the other hand, their construction is in some sense dual to the one presented here and it is probable that one can restrict their construction to special families of arcs, say admissible open-closed arc systems, to obtain a dual model of $\mathcal{OC}$.  See Section 6.7 of \cite{wahlwesterland} for more details on the Kaufmann-Penner model and this duality.

The organization of the paper is as follows.  Section 1 gives preliminary definitions of fat graphs, their morphisms and their fattening to a surface.  Section 2 describes the categorical models of fat graphs and gives the proof of Theorems \ref{thmA} and \ref{thmC}.  Section 3 describes the chain complex of black and white graphs and gives the proofs of Theorems \ref{thmB} and \ref{thmD}.

\emph{Acknowledgements.} I would like to thank Nathalie Wahl for many interesting questions and discussions. I would also like to thank Oscar Randal-Williams and Angela Klamt for helpful discussions and comments.  The author was supported by the Danish National Research Foundation through the Center for Symmetry and Deformation (DNRF92).

\section{Preliminary definitions}

We give the basic definitions regarding fat graphs, their realizations and morphisms.

\begin{dfn}
A \emph{combinatorial graph} $G$ is a tuple $G=(V,H,s,i)$, consisting of a finite set of \emph{vertices} $V$, a finite set of \emph{half edges} $H$, a \emph{source map} $s:H\to V$ and an involution with no fixed points $i:H\to H$.
The map $s$ ties each half edge to its source vertex and the involution $i$ attaches half edges together.  An \emph{edge} of the graph is an orbit of $i$.  The valence of a vertex $v\in V$, denoted $|v|$, is the cardinality of the set $s^{-1}(v)$ and a \emph{leave} of a graph is a univalent vertex.  
\end{dfn}

\begin{dfn}
The \emph{geometric realization} of a combinatorial graph $G$ is the CW-complex $|G|$ with one 0-cell for each vertex, one 1-cell for each edge and attaching maps given by $s$.
\end{dfn}

\begin{dfn}
A \emph{tree} is a graph whose geometric realization is a contractible space and a \emph{forest} is a graph whose geometric realization is the disjoint union of contractible spaces.
\end{dfn}

\begin{dfn}
\label{fat_definition}
A \emph{fat graph} or \emph{ribbon graph} $\Gamma=(G,\sigma)$ is a combinatorial graph together with a cyclic ordering $\sigma_v$ of the half edges incident at each vertex $v$. The \emph{fat structure} of the graph is given by the data $\sigma=(\sigma_v)$ which is a permutation of the half edges. Figure \ref{Fat_example} shows some examples of fat graphs.  We denote by $|\Gamma|$ the geometric realization of $\Gamma$.  Note that this is independent of the fat structure i.e., $|\Gamma|=|G|$. 
\end{dfn}

\begin{figure}[h!]
  \centering
    \includegraphics[scale=0.7]{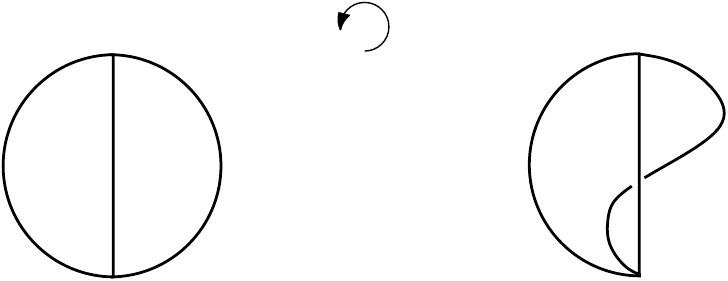}
  \caption{Two different fat graphs which have the same underlying combinatorial graph.  The fat structure is given by the orientation of the plane.}
  \label{Fat_example}
\end{figure}

\begin{dfn}
The \emph{boundary cycles} of a fat graph are the cycles of the permutation of half edges given by $\omega=\sigma\circ i$.  Each boundary cycle $c$ gives a list of half edges and determines a list of edges (possibly with multiplicities) of the fat graph $\Gamma$, those edges containing the half edges listed in $c$.  The \emph{boundary cycle sub-graph} corresponding to $c$ is the subspace of $\vert \Gamma \vert$ given by the edges determined by $c$ which are not leaves.  When clear from the context we will refer to a boundary cycle sub-graph simply as boundary cycle.
\end{dfn}

\begin{rmk}
From a fat graph $\Gamma=(G,\sigma)$ one can construct a surface with boundary $\Sigma_{\Gamma}$ by fattening the edges.  More explicitly, one can construct this surface by replacing each edge with a strip, each vertex with a disk and gluing these strips at a vertex according to the fat structure.  Notice that there is a strong deformation retraction of $\Sigma_{\Gamma}$ onto $|\Gamma|$ so one can think of $|\Gamma|$ as the skeleton of the surface.  The fat structure of $\Gamma$ is completely determined by $\omega$. Moreover, one can show that the boundary cycles of a fat graph $\Gamma=(G,\omega)$ correspond to the boundary components of $\Sigma_{\Gamma}$ \cite{Godinunstable}.  Therefore, the surface $\Sigma_{\Gamma}$ is completely determined, up to homeomorphism, by the combinatorial graph and its fat structure.
\end{rmk}

\begin{dfn}
A \emph{morphism of combinatorial graphs} $\varphi:G\to\tilde{G}$ is a map of sets $\varphi:V_G\coprod H_G\to V_{\tilde{G}}\coprod H_{\tilde{G}}$ such that
\begin{itemize}
\item[-]For every vertex $v\in V_{\tilde{G}}$ the preimage $\varphi^{-1}(v)$ is a tree in G.
\item[-]For every half edge $A\in H_{\tilde{G}}$ the preimage $\varphi^{-1}(A)$ contains exactly one half edge of $G$.
\item[-]The following diagrams commute
\begin{equation*}
\begin{array}{ccc}
\begin{tikzpicture}[scale=0.7]
\node (a) at (0,2){$V_G\coprod H_G$};
\node (b) at (4,2) {$V_G\coprod H_G$};
\node (c) at (0,0){$V_{\tilde{G}}\coprod H_{\tilde{G}}$};
\node (d) at (4,0) {$V_{\tilde{G}}\coprod H_{\tilde{G}}$};
\path[auto,arrow,->] (a) edge node{$\tilde{s}_{\scriptscriptstyle{G}}$}(b)
    		         (b) edge node{$\varphi$}(d)		             
    		         (a) edge node[swap]{$\varphi$}(c)
    		         (c) edge node [swap] {$\tilde{s}_{\tilde{\scriptscriptstyle{G}}}$}(d);
\end{tikzpicture}
& \phantom{test} &
\begin{tikzpicture}[scale=0.7]
\node (a) at (0,2){$V_G\coprod H_G$};
\node (b) at (4,2) {$V_G\coprod H_G$};
\node (c) at (0,0){$V_{\tilde{G}}\coprod H_{\tilde{G}}$};
\node (d) at (4,0) {$V_{\tilde{G}}\coprod H_{\tilde{G}}$};
\path[auto,arrow,->] (a) edge node{$\tilde{i}_{\scriptscriptstyle{G}}$}(b)
    		         (b) edge node{$\varphi$}(d)		             
    		         (a) edge node[swap]{$\varphi$}(c)
    		         (c) edge node [swap] {$\tilde{i}_{\tilde{\scriptscriptstyle{G}}}$}(d);
\end{tikzpicture}
\end{array}
\end{equation*} 
where $\tilde{i}$, respectively $\tilde{s}$, is the extension of the involution $i$, respectively the source map $s$, to $V\coprod H$ by the identity on $V$.
\end{itemize}
\end{dfn}

\begin{dfn}
A \emph{morphism of fat graphs} $\varphi:(G,\omega)\to(\tilde{G},\tilde{\omega})$ is a morphism of combinatorial graphs which respects the fat structure i.e., $\varphi(\omega)=\tilde{\omega}$. 
\end{dfn}
\begin{rmk}
\label{unique_iso}
Note that, if two fat graphs $\Gamma$, $\tilde{\Gamma}$ are isomorphic and they have at least one leaf in each connected component, and these leaves are labeled by $\lbrace 1,2, 3 \ldots k\rbrace$ i.e., the leaves are ordered, then there is unique morphism of graphs that realizes this isomorphism while respecting the labeling of the leaves.  Thus, a  fat graph $\Gamma$ that has at least one labeled leaf in each connected component has no automorphisms besides the identity morphism.
\end{rmk}
\begin{rmk}
Note that a morphism of combinatorial graphs induces a simplicial, surjective homotopy equivalence on geometric realizations and does not change the number of boundary cycles.  Thus, if there is a morphism of fat graphs $\varphi:\Gamma\to\tilde{\Gamma}$ then the surfaces $\Sigma_{\Gamma}$ and $\Sigma_{\tilde{\Gamma}}$ are homeomorphic.
\end{rmk}

\section{Categories of fat graphs}

\subsection{The definition}
We now define the basic objects and morphisms that form the categories of fat graphs that we will study.

\begin{dfn}
\label{oc_def}
Let $L_{\Gamma}$ denote the set of leaves of a fat graph $\Gamma$.  An \emph{open-closed fat graph} is a triple $\Gamma^{oc}=(\Gamma, In, Closed)$ where $\Gamma$ is a fat graph with leaves $In,Closed\subset L_{\Gamma}$, together with an ordering of the leaves in $In$ and the leaves in $L_{\Gamma}-In$.  If a leave $v\in In$ we call it \emph{incoming}, else we call it \emph{outgoing}.  Similarly if a leave $v\in Closed$ we call it \emph{closed}, else we call it \emph{open}. The triple $\Gamma^{oc}$ should be given such that the following hold:
\begin{itemize}
\item[-] All inner vertices are at least trivalent
\item[-] A closed leaf must be the only leaf in its boundary cycle
\end{itemize}
We allow degenerate graphs which are a corolla with $1$ or $2$ leaves.  Figure \ref{openclosed_example} shows an example of an open-closed fat graph.
\end{dfn}

\begin{figure}
  \centering
    \includegraphics[scale=0.7]{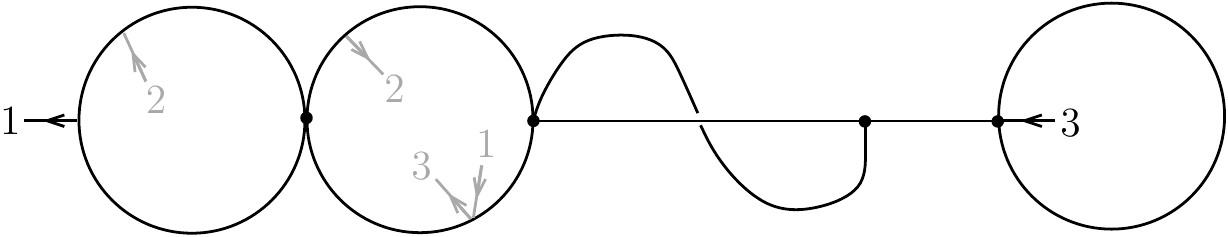}
  \caption{An example of a closed fat graph which is not admissible.  The incoming and outgoing leaves are marked by incoming or outgoing arrows. The closed leaves are depicted in black and the open ones in grey.}
  \label{openclosed_example}
\end{figure}

\begin{rmk}
\label{fatten_cobordism}
From an open-closed fat graph one can construct an open-closed cobordism $\Sg$.  First construct a bordered oriented surface $\Sigma_{\Gamma}$ as for a regular fat graph.  Now, divide the boundary by the following procedure. For a boundary component corresponding to a closed leave, label the entire boundary component as incoming or outgoing according to the labeling of the leaf and choose a marked point on the boundary. For a boundary component corresponding to one or more open leaves assign to each leaf a small part of the boundary (homeomorphic to the unit interval) such that none of these intervals intersect and such that they respect the cyclic ordering ordering of the leaves on the corresponding boundary cycle.  Then label such intervals as incoming or outgoing according to their corresponding leaves and choose a marked point in each interval. Label the rest of the boundary as free.  Finally order the marked points at the boundary according to the ordering of their corresponding leaves.  This gives and open-closed cobordism $\Sg$ well defined up to topological type.  
\end{rmk}

The following is a slight variation of a definition due to Godin in \cite{godin} of a special kind of open-closed fat graph.

\begin{dfn}
\label{ad_def}
An \emph{admissible fat graph} $\Gamma^{ad}=(\Gamma, In, Closed)$ is an open-closed fat graph in which all outgoing closed boundary cycles are disjoint embedded circles in $|\Gamma|$.  Figure \ref{Admissible_example} shows an example of an admissible fat graph and Figure \ref{openclosed_example} shows an example of an open-closed fat graph which is not admissible.
\end{dfn}
Note that an open-closed fat graph, which is not a corolla, can not be an admissible fat graph if all of its leaves are outgoing closed.

\begin{figure}[h!]
  \centering
    \includegraphics[scale=0.7]{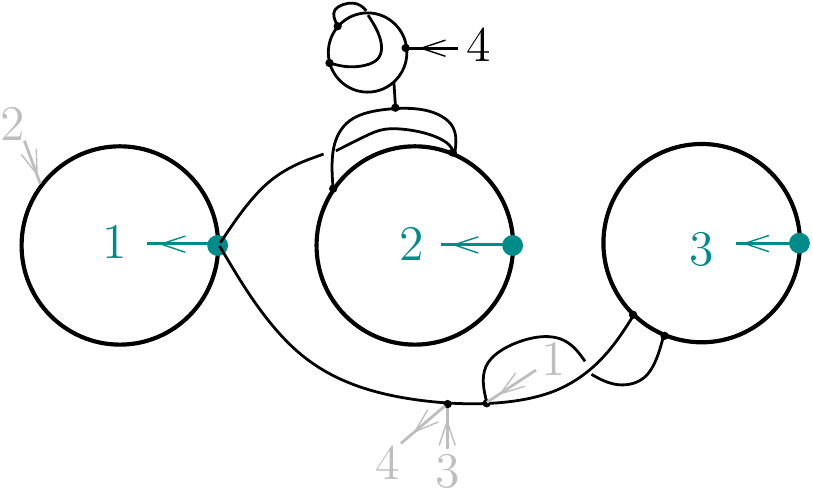}
  \caption{An example of an admissible fat graph.  The admissible leaves (outgoing closed) are pictured in green.}
  \label{Admissible_example}
\end{figure}

\begin{notation}
When it is clear from the context we will simply write $\Gamma$ instead of $\Gamma^{oc}$ or $\Gamma^{ad}$
\end{notation}

\begin{dfn}
A \emph{morphism of open-closed fat graphs} is a morphism of fat graphs which respects the labeling of the leaves. Two morphisms $\varphi_i:\Gamma_i\to\tilde{\Gamma_i}$ for $i=1,2$ are equivalent if there are isomorphisms which make the following diagram commute
\begin{equation*}
\begin{tikzpicture}[scale=0.7]
\node (a) at (0,2){$\Gamma_1$};
\node (b) at (3,2) {$\tilde{\Gamma}_1$};
\node (c) at (0,0){$\Gamma_2$};
\node (d) at (3,0) {$\tilde{\Gamma}_2$};
\path[auto,arrow,->] (a) edge node{$\varphi_1$}(b)
    		         (b) edge node{$\cong$}(d)		             
    		         (a) edge node[swap]{$\cong$}(c)
    		         (c) edge node [swap] {$\varphi_2$}(d);
\end{tikzpicture}
\end{equation*} 
\end{dfn}
\begin{rmk}
Let $[\Gamma]$ and $[\Gamma']$ be two isomorphism classes of open-closed fat graphs.  One can show that all morphisms $[\varphi]:[\Gamma]\to\tilde{[\Gamma]}$ can be realized uniquely as a collapse of a sub-forest of $\Gamma$ which does not contain any leaves.  The argument is exactly the same as the one given in \cite{Godinunstable} for the case where all leaves are incoming closed.
\end{rmk}

\begin{dfn}
The category of open-closed fat graphs $\Fatoc$ is the category with objects isomorphism classes of open-closed fat graphs with at least one leaf on each component and morphisms equivalences classes of morphisms.  The category of admissible fat graphs $\Fatad$ is the full subcategory of $\Fatoc$ on objects isomorphism classes of admissible fat graphs.
\end{dfn}

\begin{rmk}
These categories are slightly different than the ones given in \cite{godin} since there are no leaves for the free boundary components.  However, the exact same argument given in \cite{Godinunstable} shows that these categories are well defined.  More precisely, composition is well defined since as given in Remark \ref{unique_iso}, there is a unique isomorphism of open-closed fat graphs between two open-closed fat graphs with at least one leaf on each component and an open-closed fat graph of such kind has no automorphisms besides the identity morphism.
\end{rmk}

\subsection{Fat graphs as models for the mapping class group}
The categories $\Fatoc$ and $\Fatad$ are introduced by Godin in \cite{godin}.  In this paper, she shows that both categories are models of the classifying space of the mapping class group by comparing a sequence of fibrations.  However, there is a step missing in the proof which we do not know how to complete.  More precisely, Godin proves this by comparing certain fiber sequences, but a map connecting them is not explicitly constructed and we do not know how to construct such map.  In this section we give a new proof, more geometric in nature, that shows that these categories model mapping class groups, following the ideas of \cite{Godinunstable}.

\begin{thm}
\label{ad_oc}
The categories of open-closed fat graphs and admissible fat graphs are models for the classifying spaces of mapping class groups of open-closed cobordisms.  More specifically there is a homotopy equivalence
\[|\Fatoc|\simeq \coprod_{\Sg} \mathrm{B}\Modgpq\]
where the disjoint union runs over all topological types of open-closed cobordisms in which each connected component has at least one boundary component which is not free.  Moreover, this restricts on the subcategory of admissible fat graphs to a homotopy equivalence 
\[|\Fatad|\simeq \coprod_{\Sg} \mathrm{B}\Modgpq\]
where the disjoint union runs over all topological types of open-closed cobordisms in which each connected component has at least one boundary component which is neither free nor outgoing closed. 
\end{thm}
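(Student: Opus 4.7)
The plan is to prove the two statements componentwise. First, a morphism of open-closed fat graphs induces a simplicial homotopy equivalence on geometric realizations that preserves boundary cycles and the labelling of leaves, so by the fattening construction it induces a homeomorphism $\Sigma_\Gamma\to\Sigma_{\tilde\Gamma}$ respecting the open-closed cobordism structure. Hence the topological type of $S$ is locally constant on $|\Fatoc|$ and $|\Fatad|$, and I may fix $S$ and work with the corresponding components $\Fatoc_S$ and $\Fatad_S$. It suffices to produce homotopy equivalences $|\Fatoc_S|\simeq B\Modgpq$ and, when $S$ has in each component a boundary component which is neither free nor outgoing closed, $|\Fatad_S|\simeq B\Modgpq$.

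Following the strategy announced after the statement, I would introduce auxiliary categories $\Eoc_S$ and $\Ead_S$ whose objects are pairs $(\Gamma,[\phi])$ with $\Gamma\in\Fatoc_S$ (resp.\ $\Fatad_S$) and $[\phi]$ the isotopy class of an orientation-preserving homeomorphism $\phi\colon \Sigma_\Gamma\to S$ matching the boundary labels, marked points, and collars. Morphisms are morphisms of fat graphs $\varphi\colon\Gamma\to\tilde\Gamma$ such that $\tilde\phi\circ\Sigma_\varphi$ is isotopic to $\phi$, where $\Sigma_\varphi\colon\Sigma_\Gamma\to\Sigma_{\tilde\Gamma}$ is the homotopy equivalence induced by collapsing the relevant sub-forest. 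Postcomposition of markings gives a free $\Modgpq$-action on each category, and by Remark \ref{unique_iso} the fibres over $[\Gamma]$ consist of a single $\Modgpq$-orbit with no stabiliser. Since all objects are finite CW-complexes, the resulting quotient maps
\[
|\Eoc_S|\fib|\Fatoc_S|,\qquad |\Ead_S|\fib|\Fatad_S|
\]
are principal $\Modgpq$-bundles. The theorem will then follow once I prove that $|\Eoc_S|$ and $|\Ead_S|$ are contractible.

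Contractibility I would derive from the arc complex of $S$. To each marked fat graph $(\Gamma,[\phi])$ one associates a dual system of properly embedded arcs on $S$: each edge of $\Gamma$ that does not lie inside an outgoing closed boundary cycle becomes an arc meeting $\partial_{in}S\cup\partial_{out,\text{open}}S\cup\partial_{free}S$ in prescribed intervals. This assignment is (homotopy) invariant under forest collapses and so defines a functor from $\Eoc_S$ to the poset $\Arc(S)$ of isotopy classes of such full arc systems. Using Quillen's Theorem A, one shows this functor is a homotopy equivalence: the fibre over a given arc system is the nerve of a category of ``how to glue a fat graph in the complement'', which is a poset of trees-with-labels and hence contractible by an inductive leaf-pruning argument. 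The Hatcher--Harer--Godin contractibility of the arc complex of an open-closed cobordism (in the version stated by Godin for admissible arc families) then gives $|\Eoc_S|\simeq *$. For $\Ead_S$, the admissibility condition on $\Gamma$ translates to the presence of a specified closed arc (one that returns to itself along the outgoing closed boundary) in the dual system, cutting off an annular neighbourhood of each outgoing closed boundary component; the image lands in a sub-poset $\Arc^{ad}(S)\subset\Arc(S)$.

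The step I expect to be hardest is proving contractibility of $|\Ead_S|$, equivalently of $\Arc^{ad}(S)$. The admissibility condition is global, depending on how boundary cycles embed, so the straight-line combinations of arcs used in standard proofs of contractibility of the arc complex do not manifestly preserve it. I would handle this by fixing once and for all an admissible configuration with a single ``anchor'' arc $\alpha$ lying on a boundary component of $S$ which is neither free nor outgoing closed (this is where the hypothesis on $S$ is essential; without such an anchor the admissible sub-poset can be empty, consistent with the corolla exception), and running Hatcher's flow based at $\alpha$: because $\alpha$ and the circles around outgoing closed boundaries are disjoint, the flow restricts to admissible systems and produces a deformation retraction of $\Arc^{ad}(S)$ onto the star of the chosen configuration, which is contractible. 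Once this is in hand, combining with the principal bundle structure above gives both homotopy equivalences in the theorem.
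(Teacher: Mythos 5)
Your overall strategy matches the paper's: reduce to connected components, build auxiliary categories of marked fat graphs so that $|\Eoc_S|\fib|\Fatoc_S|$ and $|\Ead_S|\fib|\Fatad_S|$ are universal $\Modgpq$-bundles, translate to arc systems by duality, and deduce contractibility from Harer--Hatcher for the open-closed case while restricting Hatcher's flow in the admissible case by choosing the anchor arc away from the outgoing closed boundary. That last point in particular is exactly the paper's argument in Proposition~\ref{B0_con}: not all boundaries are outgoing closed by hypothesis, so one can choose $\beta$ accordingly and the flow preserves admissibility.

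There is, however, a genuine error in how you translate admissibility into arc-system language, and it would break the admissible half of the argument. You say the admissibility of $\Gamma$ corresponds, on the dual side, to ``the presence of a specified closed arc (one that returns to itself along the outgoing closed boundary) cutting off an annular neighbourhood.'' Under the paper's duality (Propositions~\ref{A0_E} and~\ref{B0_E}) this is essentially backwards: condition $(ii)$ of Definition~\ref{arc_admissible} explicitly \emph{forbids} arcs with both endpoints on an outgoing closed boundary component; such an arc dualizes to an edge that is traversed twice by the outgoing closed boundary cycle, which is precisely what makes the graph non-admissible. The correct arc-side characterization involves three somewhat subtle combinatorial conditions (the arcs leaving each outgoing closed boundary must be separated from the other outgoing closed boundaries, must go to other boundary components, and must collectively bound an annular piece of the complement), and establishing the equivalence with the graph-side condition is the content of Proposition~\ref{B0_E}, which takes some work. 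Your plan would need to supply that translation correctly before the restriction of Hatcher's flow can be invoked.

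A second, smaller gap: the Quillen Theorem~A step. You map $\Eoc_S$ to the full arc complex and claim the fiber over an arc system is a poset of ``how to glue a fat graph in the complement,'' contractible by leaf-pruning. The paper takes a cleaner route: it first proves directly that $\Eocg$ is \emph{isomorphic} (not merely equivalent) to the poset $\Acat_0(S,\Delta)^{\mathrm{op}}$ of \emph{filling} arc systems (Proposition~\ref{A0_E}); Quillen's Theorem~A is then used to compare this poset to the barycentric subdivision of the full arc complex, with fibers analyzed by an induction over complexity of the cut surface, reducing to contractibility of smaller $\Acat_0(S_i)$'s (Proposition~\ref{A0_con}). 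Your fiber description is vague enough that it is hard to judge whether the tree-pruning argument actually closes the inductive loop, and it would need to be spelled out. Finally, your assertion that the $\Modgpq$-action on markings is free and transitive on fibers by appeal to Remark~\ref{unique_iso} is too quick: that remark rules out graph automorphisms, but freeness and transitivity of the action on markings requires the explicit construction of a surface homeomorphism from the data of two markings (extending over the complementary disks and annuli, with uniqueness up to Dehn twists about free boundaries), which the paper does carefully in the proof of Theorem~\ref{ad_oc}.
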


Let $\Fatocg$ and $\Fatadg$ denote the full subcategories  with objects open-closed fat graphs of topological type $S$ i.e., which fatten to a cobordism $S$ as in Remark \ref{fatten_cobordism}. Note that a morphism of open-closed fat graphs respects the structure that determines the topological type of the graph as an open-closed cobordism.  Therefore we have the following isomorphisms:
\begin{align*}
\Fatoc\cong\coprod_{\Sg}\Fatocg &  & \Fatad\cong\coprod_{\Sg}\Fatadg
\end{align*}
The idea of the proof of the theorem is to show there is a homotopy equivalence on each connected component by constructing coverings of $\Fatocg$ and $\Fatadg$ which have contractible realizations and admit a free action of their corresponding mapping class group which is transitive on the fibers. 

\begin{notation}
For each topological type of open-closed cobordism, with $p$ incoming boundary components and $q$ outgoing boundary components, choose and fix a representative $\Sg$  and let $x_k$ denote the marked point in the $k$-th incoming boundary for $1\leq k\leq p$ and $x_{p+k}$ denote the marked point on the $k$-th outgoing boundary $1\leq k\leq q$.   Given an open-closed fat graph $\Gamma^{oc}$, let $v_{in,k}$ denote the $k$-th incoming leaf and $v_{out,k}$ denote the $k$-th outgoing leave. 
\end{notation}

\begin{dfn}
A \emph{marking} of an open-closed fat graph is an isotopy class of embeddings $H:|\Gamma^{oc}|\cof \Sg$ such that $H(v_{in,k})=x_k$, $H(v_{out,k})=x_{p+k}$, $H(|\Gamma|)\subset S$ is a deformation retract of $S$ and the fat structure of $\Gamma^{oc}$ coincides with the one induced by the orientation of the surface.  We will call the pair $([\Gamma^{oc}],[H])$ a marked open-closed fat graph.   
\end{dfn}


\begin{rmk}
Let $\Gamma$ be an admissible fat graph, $F$ be a forest in $\Gamma$ which does not contain any leaves of $\Gamma$ and $H$ be a representative of a marking $[H]$ of $\Gamma$.  Since $[H]$ is a marking, the image of $H|_F$ (the restriction of $H$ to $|F|$) is contained in a disjoint union of disks away from the boundary.  Therefore, the marking $H$ induces a marking $H_F:|\Gamma/ F|\cof \Sg$ given by collapsing each of the trees of $F$ to a point of the disk in which their image is contained.  Note that $H_F$ is well defined up to isotopy and it makes the following diagram commute up to homotopy
\begin{equation*}
\begin{tikzpicture}[scale=0.5]
\node (a) at (0,3){$\vert\Gamma\vert$};
\node (b) at (5,3) {$\vert\Gamma/ F\vert$};
\node (c) at (5,0){$\Sg$};
\path[auto,arrow,->] (a) edge  [left hook->] node[swap]{$$ \mbox{\footnotesize $H$ } $$}  (c)
    		                      (b) edge  [right hook->] node{$$ \mbox{\footnotesize $H_F$ } $$} (c);
\path[auto,arrow,->>] (a) edge (b);
\end{tikzpicture}
\end{equation*}
\end{rmk}

\begin{dfn}
Define the category $\Eoc$ to be the category with objects marked open-closed fat graphs $([\Gamma^{oc}],[H])$ and morphisms given by morphisms in $\Fatoc$ where the map acts on the marking as stated in the previous remark.  Define $\Ead$ to be the full subcategory of $\Eoc$ with objects $([\Gamma^{ad}],[H])$ marked admissible fat graphs. 
\end{dfn}

\begin{proof}[Proof of Theorem \ref{ad_oc}]
It is enough to show the result in each connected component. Let $\Eocg$ and $\Eadg$ be the full subcategories of $\Eoc$ and $\Ead$ corresponding to marked fat graphs of topological type $\Sg$.  There are natural projections given by forgetting the marking, making the following square commute.
\begin{equation*}
\begin{tikzpicture}[scale=0.5]
\node (a) at (0,3){$\Eadg$};
\node (b) at (5,3) {$\Eocg$};
\node (c) at (0,0){$\Fatadg$};
\node (d) at (5,0) {$\Fatocg$};
\path[auto,arrow,->] (a) edge [right hook->] (b)
    		         (c) edge [right hook->] (d);
\path[auto,arrow,->>] (a) edge (c)
    		         (b) edge (d);
\end{tikzpicture}
\end{equation*}
where the horizontal maps are inclusions.  We show that there is a free action of $\Modgpq$ on $\Eocg$ with quotient $\Fatocg$ i.e., we show that $\Modgpq$ acts on $\vert \Eocg\vert$ and we show that this action is free and transitive on the fibers by showing that it is free and transitive on the $0$-simplices.

The mapping class group acts on $\Eoc$ by composition with the marking. Thus, it is enough to show that this group acts freely and transitively on the markings i.e., for any two markings $[H_1]$ and $[H_2]$ there is a unique $[\varphi]\in\Modgpq$ such that $[\varphi\circ H_1]=[H_2]$.  Given two such markings, we will construct a homeomorphism $g:\Sg\to\Sg$ such that $[g\circ H_1]=[H_2]$ which we can approximate by a diffeomorphism by Nielsen's approximation theorem \cite{nielsen}. By definition $\Sg\setminus H_1(\Gamma)$ has $p+q+f$ connected components where  $f$ is the number of free boundary components of $\Sg$, say $\Sg\setminus H_1(\Gamma):=\sqcup_i S_i$ for $1\leq i\leq p+q+f$.  Moreover, each component $S_i$ is of one of the following forms: 
\begin{itemize}
\item[$i$] If there is exactly one leaf in a boundary cycle, then $S_i$ is a disk bounded by the image under $H_1$ of the given boundary cycle and its leave and by the corresponding boundary component.
\item[$ii$] If there is more than one leaf on a boundary cycle, then $S_i$ is a disk bounded by the image under $H_1$ of part of the boundary cycle and part of the corresponding boundary component (the sections bounded by consecutive leaves).
\item[$iii$] If there is no leaf in a boundary cycle, then $S_i$ is an annulus with boundaries the image of $H_1$ of the given boundary cycle and its corresponding boundary component.
\end{itemize}

We construct $g$ by defining homeomorphisms in each component which can be glued together consistently. Order the $S_i$'s according to the ordering of the incoming and outgoing leaves and a chosen ordering of the free boundary components. If $S_i$ is of the types $(i)$ or $(ii)$ then the corresponding boundary component of the surface is not free.  So the restriction of $g$ to such component should give a map $g_i:S_i\to S_i$. In this case, define $\tilde{g}_i:\partial S_i\to\partial S_i$ to be the identity on the boundary section and to be $H_2\circ H_1^{-1}$ on the image of the boundary cycle.  Since $S_i$ is homeomorphic to a disk, we can extend $\tilde{g}_i$ to a map $g_i:S_i\to S_i$ which is uniquely defined up to homotopy. On the other hand, if $S_i$ is of type $(iii)$ then the corresponding boundary component is free and thus the restriction of $g$ should give a map $g_{ij}:S_i\to S_j$ where $S_j$ also corresponds to a free boundary component and it could be that $i=j$. In this case, define $\tilde{g}_{ij}:\partial S_i\to\partial S_j$ to be $H_2\circ H_1^{-1}$ on the image of the boundary cycle and a homeomorphism homotopic to the identity on the boundary of the surface.  This morphism can be extended, though not uniquely, to a map $g_{ij}:S_i\to S_j$.  Choose any extension of such map. These maps can be glued together giving the desired map $g$ which we can approximate by a diffeomorphism $\varphi$.  Finally, two non-homotopic extensions of $g_{ij}$ differ only by powers of a Dehn twists around the free boundary. Thus $[\varphi]$ is determined uniquely in $\Modgpq$.  This argument restricts to the subcategory $\Eadg$.  

Propositions \ref{A0_E}, \ref{A0_con}, \ref{B0_E} and \ref{B0_con} in the next subsection show that $\vert \Eocg\vert $ and $\vert \Eadg\vert$ are finite contractible CW-complexes, which finishes the proof. 
\end{proof}

\subsubsection{The categories of marked fat graphs are contractible}
In this section we describe how the categories of marked fat graphs are dual to the categories of arcs embedded in a surface and use this to show that $\Eocg$ and $\Eadg$ are contractible categories by using Hatcher's proof of the contractibility of the arc complex.  

\begin{dfn}
Let $\Sigma$ be an orientable surface and $V\subset\partial \Sigma$ a subspace with finitely many connected components each of which is a closed interval or a circle.  
\begin{itemize}
\item[-]An \emph{essential arc} $\alpha_0$, is an embedded arc in $\Sigma$ that starts and ends at $\partial \Sigma - V$, intersects $\partial \Sigma$ only at its endpoints and it is not boundary parallel i.e., $\alpha_0$ does not separate $\sigma$ into two components one of which is a disk that contains only one connected component of $V$ which is homeomorphic to an interval.
\item[-] An \emph{arc set} $\alpha$ in $\Sigma$ is a collection of arcs $\alpha=\lbrace\alpha_0,\alpha_1,\cdots\alpha_n\rbrace$ such that their interiors are pairwise disjoint and no two arcs are ambient isotopic relative to $V$.
\item[-] An \emph{arc system} $[\alpha]$ in $\Sigma$ is an ambient isotopy class of arc sets of $\Sigma$ relative to $V$.
\item[-] An arc system is \emph{filling} if it separates $\Sigma$ into polygons.
\end{itemize}
\end{dfn}

The following definition and result is originally due to Harer in \cite{Harer_arc} to which later on Hatcher gives a very beautiful and simple proof in \cite{hatcher_arc}

\begin{dfn}
Let $\Sigma$ be an orientable surface and $V\subset\partial \Sigma$ a subspace as described above.  The \emph{arc complex}, $\Acat(\Sigma,V)$, is the complex with vertices isotopy classes of essential arcs $[\alpha_0]$, $k$ simplices arc systems of the form $[\alpha]=[\alpha_0,\alpha_1,\cdots\alpha_k]$ and faces obtained by passing to sub-collections.
\end{dfn}

\begin{thm}[\cite{Harer_arc}, \cite{hatcher_arc}]
\label{harer_contractible}
The complex $\Acat(\Sigma,V)$ is contractible whenever $\Sigma$ is not a disk or an annulus with one of its boundary components contained in $V$.
\end{thm}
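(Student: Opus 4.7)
The plan is to reproduce Hatcher's surgery argument from \cite{hatcher_arc}. Since $\Sigma$ is neither a disk nor an annulus with a boundary circle contained in $V$, one can fix a single essential arc $\alpha_0$ of $(\Sigma,V)$. I will deformation retract $\Acat(\Sigma,V)$ onto the closed star of the vertex $[\alpha_0]$, which is a cone on its link and hence contractible.

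The core construction is a surgery operation along $\alpha_0$. Given any arc system $\beta$, first isotope a representative into \emph{minimal position} with $\alpha_0$: transverse and bounding no bigon together with $\alpha_0$. By the bigon criterion this is always achievable, and for the disjoint collection of arcs underlying a whole simplex it can be arranged simultaneously while keeping the arcs of $\beta$ pairwise disjoint. At every intersection point of $\beta$ with $\alpha_0$ I cut $\beta$ and slide the two free ends along $\alpha_0$ to its two endpoints in $\partial\Sigma\setminus V$, using a consistent choice of surgery side. The output $\sigma_{\alpha_0}(\beta)$ is an arc system disjoint from $\alpha_0$; whenever $\beta$ meets $\alpha_0$ it contains at least one arc parallel to $\alpha_0$, and in any case $\sigma_{\alpha_0}(\beta)\cup\{[\alpha_0]\}$ spans a simplex of $\Acat(\Sigma,V)$.

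The retraction is then assembled as follows. For any simplex $[\beta_0,\ldots,\beta_k]$ put simultaneously in minimal position with $\alpha_0$, I would verify that the combined collection consisting of the $\beta_i$, the surgered arcs $\sigma_{\alpha_0}(\beta_i)$, and $\alpha_0$ itself can be isotoped to have pairwise disjoint interiors (the surgery takes place in an arbitrarily small neighborhood of $\alpha_0$ and, by construction, introduces no new crossings with the $\beta_i$ outside this neighborhood). Hence all these arcs span a single common simplex $\tau$ of $\Acat(\Sigma,V)$ containing $[\alpha_0]$ as a vertex. A straight-line path inside $\tau$ from any point of the original simplex to $[\alpha_0]$ provides a simplex-wise retraction to the star of $[\alpha_0]$; performing these retractions for every simplex simultaneously and checking agreement on faces yields the global deformation retraction.

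The main obstacle I expect is precisely this coherence: making the choices of minimal-position isotopy and of surgery side compatible across faces of simplices, so that the simplex-wise straight-line retractions glue into a well-defined continuous homotopy on all of $\Acat(\Sigma,V)$. This is the technical heart of Hatcher's argument and is where the hypothesis on $\Sigma$ is genuinely used: on a disk or an annulus with a boundary circle in $V$ there is no essential arc $\alpha_0$ to base the surgery on, so the construction cannot begin and indeed the arc complex fails to be contractible in those cases.
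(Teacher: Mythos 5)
The theorem you are proving is cited in the paper from Harer and Hatcher and not reproved there; the closest the paper comes is the sketch of Hatcher's flow inside the proof of Proposition \ref{B0_con}, adapted to the admissible arc subcomplex $\Bcat(S,\Delta)$. Your plan to reproduce Hatcher's surgery argument from \cite{hatcher_arc} is the right starting point, but the way you assemble the retraction contains a genuine error.

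The gap is in the sentence claiming that the combined collection consisting of the $\beta_i$, the surgered arcs $\sigma_{\alpha_0}(\beta_i)$, and $\alpha_0$ spans a single simplex $\tau$ of $\Acat(\Sigma,V)$. This cannot happen: if some $\beta_i$ crosses $\alpha_0$, then by definition of minimal position their geometric intersection number is positive, so $\beta_i$ and $\alpha_0$ cannot be isotoped off one another and hence do not span an edge of the arc complex. But these intersecting $\beta_i$ are exactly the ones the surgery is supposed to handle. So the ``big simplex'' $\tau$ on which your straight-line homotopy is meant to live does not exist, and the homotopy $(1-t)p + t[\alpha_0]$ is not a point of $\Acat(\Sigma,V)$. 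The construction cannot be repaired by a cleverer choice of surgery side or minimal-position isotopy; the obstruction is combinatorial, not a coherence issue.

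Hatcher's actual argument is sequential in an essential way. He orders the intersection points $x_1,\dots,x_l$ of the arc system with $\alpha_0$ along $\alpha_0$, starting from one endpoint, and surgers only at $x_1$, sliding the arc through $x_1$ along $\alpha_0$ to both ends. Because $x_1$ is the first intersection along $\alpha_0$, the strip of $\alpha_0$ between $x_1$ and the nearby endpoint of $\alpha_0$ is free of other arcs, so the two surgered arcs can be made disjoint from all the original $\beta_j$ simultaneously. This produces a simplex $\sigma_2$ that contains both the original simplex $\tilde\sigma_1$ and the partially-surgered simplex $\tilde\sigma_2$ as faces; crucially, $\sigma_2$ does \emph{not} contain $[\alpha_0]$ as a vertex. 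Iterating gives a finite zigzag $\tilde\sigma_1\hookrightarrow\sigma_2\hookleftarrow\tilde\sigma_2\hookrightarrow\sigma_3\hookleftarrow\cdots\hookleftarrow\tilde\sigma_{l+1}$ whose last term is disjoint from $\alpha_0$ and hence lies in the closed star of $[\alpha_0]$. The flow moves linearly through this sequence in barycentric coordinates. The well-definedness and continuity across face inclusions is exactly why the ordering is used: restricting to a face drops intersection points from the list $x_1,\dots,x_l$, and the remaining surgeries commute with this restriction. Your simultaneous surgery collapses this entire zigzag to a single step, which is precisely what breaks.

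You may also have misattributed where the hypothesis on $\Sigma$ is used. It is needed not only to guarantee that some essential arc $\alpha_0$ exists, but to guarantee that every simplex of $\Acat(\Sigma,V)$ is a face of some simplex whose underlying arc set meets $\alpha_0$ transversally in minimal position and can be made disjoint from $\alpha_0$ by the sequence of surgeries, i.e., that the flow actually terminates in the star rather than running into boundary-parallel degeneracies.
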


\begin{rmk}
The arc complex was originally described in terms of arcs with endpoints on a finite set of points $W$ in $\Sigma$.  These descriptions are equivalent.  To obtain the original description from the one above, start with a surface $\Sigma$ and $V\subset \partial \Sigma$ as above.  Then collapse each connected component of $\partial \Sigma - V$ to a single point.  One obtains a new surface $\Sigma'$ with a finite set of marked points $W$, one for each component of $\partial \Sigma - V$.  The vertices of the arc complex are then isotopy classes of arcs in $\Sigma'$ with endpoints in $W$.  
\end{rmk}

We now make the connection between arc systems and  marked fat graphs.

\begin{dfn}
Let $\Sg$ be an open-closed cobordism and $\Upsilon=\lbrace x_1,\ldots x_{p+q}\rbrace$ be the set of marked points on the boundary given by the boundary parametrizations.  For each $x_i\in\Upsilon$ which is closed, choose a closed interval $I_{x_i}\subset \partial S$ around $x_i$ and let 
\[\Delta=\partial_{\text{open}} S \bigcup \cup_{xi} I_{x_i}\]
where the union is taken over all closed marked points $x_i$.  Let $\Acat_0(\Sg,\Delta)$ denote the poset category of filling arc systems ordered by inclusion. In the case where $\Sg$ is a disk with $p+q\geq 1$, the surface is already a polygon and thus we consider the empty set to be a filling arc system.
\end{dfn}

\begin{prop}
\label{A0_E}
There is an isomorphism of categories $\Acat_0(\Sg,\Delta)^{op}\cong\Eocg$
\end{prop}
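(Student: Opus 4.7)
The plan is to construct explicit inverse functors between the two categories, exploiting the classical Poincaré-type duality between a ribbon graph embedded as a spine and a filling arc system cutting the surface into polygons. In one direction, given an arc system $[\alpha] \in \Acat_0(\Sg,\Delta)$, I cut $\Sg$ along representatives of the arcs. Since $[\alpha]$ is filling, every complementary component is a polygon whose boundary alternates between arcs of $\alpha$ and segments of $\partial \Sg$. The dual marked fat graph $\Gamma_\alpha$ is built by placing one inner vertex inside each polygon, drawing one inner edge for each arc in $\alpha$ (connecting the vertices of the two adjacent polygons), and placing a leaf at each marked point $x_i \in \Upsilon$ lying on the polygon boundary, connected by an edge running inside the polygon to the central vertex. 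The fat structure at each vertex is the cyclic order induced by the orientation of $\Sg$, and the leaves inherit the incoming/outgoing and open/closed labels from the corresponding points of $\Upsilon$. The inclusion of this graph into the surface gives the marking $[H_\alpha]$.

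In the other direction, given $([\Gamma],[H]) \in \Eocg$, I take $\alpha_\Gamma$ to consist of one arc dual to each non-leaf edge $e$ of $\Gamma$: a small embedded arc transverse to $H(e)$ whose endpoints lie on $\partial \Sg - \Delta$ (pushing the endpoints onto free boundary or onto the complement of $I_{x_i}$ on a closed boundary). Since $H(|\Gamma|)$ is a deformation retract of $\Sg$ and the leaves $v_{in,k}, v_{out,k}$ are mapped to the marked points $x_k$, cutting $\Sg$ along $\alpha_\Gamma$ recovers one polygon per vertex of $\Gamma$, so $\alpha_\Gamma$ is automatically filling; isotopy of $H$ and of the transverse arcs shows that $[\alpha_\Gamma]$ depends only on $[\Gamma]$ and $[H]$. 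The two constructions are visibly inverse on isomorphism classes, so they set up a bijection on objects.

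For functoriality, observe that adding an arc $\alpha_{n+1}$ to a filling system $[\alpha]$ further subdivides one polygon into two, which on the dual side amounts to inserting an extra vertex in the middle of the corresponding inner edge of $\Gamma_\alpha$; equivalently, $\Gamma_\alpha$ is obtained from $\Gamma_{\alpha \cup \alpha_{n+1}}$ by collapsing the new edge. Iterating, an inclusion of arc systems $[\alpha] \subseteq [\alpha']$ corresponds to collapsing a sub-forest (not containing any leaves) of $\Gamma_{\alpha'}$ to obtain $\Gamma_\alpha$, which is exactly a morphism $\Gamma_{\alpha'} \to \Gamma_\alpha$ in $\Fatocg$ lifted to markings via the induced marking described in the previous remark. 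Since morphisms in $\Acat_0(\Sg,\Delta)$ point in the direction of inclusion while the corresponding morphisms of marked fat graphs go the other way, we land in $\Eocg$ only after passing to the opposite category, producing the claimed isomorphism $\Acat_0(\Sg,\Delta)^{op} \cong \Eocg$.

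The main obstacle I expect is the bookkeeping around leaves and the degenerate cases: one must check that closed leaves appear as the unique leaf on their boundary cycle (matching the condition that $I_{x_i}$ is an interval on a closed boundary component whose complement consists of arc endpoints), that the ordering of leaves and the orientation-induced fat structure match under both constructions, and that the trivalency condition on inner vertices of $\Gamma^{oc}$ is consistent with the fact that $\Acat_0$ parametrises \emph{filling} arc systems whose dual spines need not be trivalent in general — so the statement should be read with $\Eocg$ allowing inner vertices of valence $\geq 3$, which indeed matches Definition \ref{oc_def}. The edge cases (disks with $p+q \ge 1$, for which the empty arc system is declared filling and dual to the corolla) and corollas with one or two leaves have to be handled by direct inspection.
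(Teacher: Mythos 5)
Your proposal is, at its core, the same construction as the paper's: a vertex per complementary polygon, an inner edge per arc, a leaf edge per marked point, fat structure from the surface orientation, with morphisms realized as forest collapses and the contravariance handled by passing to the opposite category. The one genuine gap is that you flag the trivalency requirement as ``something one must check'' and then dismiss it as a compatibility of definitions, whereas it is an actual claim that has to be \emph{proved} using the essentiality of the arcs: a priori a filling arc system could cut off a monogon or bigon whose dual vertex has valence $1$ or $2$, and it is precisely the condition that no arc is boundary-parallel (i.e.\ does not cut off a disk containing a single interval component of $\Delta$) which forces every such low-sided polygon to abut at least two components of $\Delta$, so that the dual vertex also carries enough leaf edges to have valence $\geq 3$. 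The paper spells this out explicitly; your sketch leaves it unargued.

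A second, smaller inaccuracy: when you describe functoriality you write that adding an arc ``amounts to inserting an extra vertex in the middle of the corresponding inner edge of $\Gamma_\alpha$.'' That is not what happens --- adding an arc splits one polygon into two, which dually is a \emph{vertex expansion}: one vertex of $\Gamma_\alpha$ splits into two vertices joined by a fresh edge (it does not subdivide an existing edge, which would create a forbidden bivalent vertex). Your very next sentence, that $\Gamma_\alpha = \Gamma_{\alpha\cup\alpha_{n+1}}/e_{n+1}$, is the correct statement and is the one actually used, so this is only a mis-phrasing of the intermediate picture, but the ``equivalently'' linking the two is false.
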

\begin{proof}
It is enough to show this for a connected cobordism $S$. Throughout the proof $S$ will be fixed, so for simplicity we will denote the category $\Eocg$ as $\Ecat$ and the category $\Acat_0(S,\Delta)$ as $\Acat_0$.  We will construct contravariant inverse functors 
\[\Phi:\Acat_0\adj\Ecat:\Psi.\]
We first define the functor $\Phi$ on objects. Let $[\alpha]=[\alpha_0,\ldots,\alpha_k]$ be a filling arc system and choose a representative arc set $\alpha=\lbrace\alpha_0,\ldots,\alpha_k\rbrace$.  Then, $S\setminus\alpha=\coprod_i T_i$ is a disjoint union of polygons.  Construct a fat graph $\Gamma$ on the surface $S$ by setting a vertex $v_i$ in each $T_i$.  If $T_i$ and $T_j$ are bordering components separated by an arc $\alpha_{ij}$ connect $v_i$ with $v_j$ with an edge $e_{ij}$ that crosses only $\alpha_{ij}$ and crosses it exactly once.  Moreover, if the marked point $x_j\in T_i$ connect $v_i$ with $x_j$ via an edge $l_j$. Make all edges non-intersecting on the surface.  Each polygon $T_i$ has an induced orientation coming from $S$, this gives a cyclic ordering of the edges incident at $v_i$.  Note that the $x_j$'s are leaves of $\Gamma$.  Moreover, by construction $\Gamma$ comes with a natural marking $[H]$ on $S$.  So set $\phi(\alpha)=([\Gamma], [H])$. Note that all the polygons $T_i$ have at least three bounding arcs or are of the form shown in Figure \ref{trivalent} with at least two components of $\Delta$ bounded by an essential arc.  Thus, all inner vertices in $\Gamma$ are at least trivalent, so $\phi(\alpha)$ is an object of $\Ecat$.  Moreover, setting $\Phi([\alpha])=\phi(\alpha)$ is well defined since two representatives $\alpha$ and $\beta$ of the arc system $[\alpha]$ are ambient isotopic so they split the surface in the same number of connected components giving isomorphic underlying fat graphs.  Moreover, we can use the ambient isotopy connecting both representatives to show that they induce the same marking on $[\Gamma]$. 

To define $\Phi$ on morphisms, let $[\beta]$ be a face of $[\alpha]$.  We can find representatives such that $\alpha=\beta\cup\lbrace\alpha_0,\ldots,\alpha_n\rbrace$.  Note that if the edges corresponding to $\lbrace\alpha_0,\ldots,\alpha_n\rbrace$ form a cycle on $\phi(\alpha)$ then $\beta$ is not filling. Therefore, the edges corresponding to these arcs must form a forest and there is a uniquely defined morphism obtained from collapsing such forest which gives the map $\Phi([\alpha])\to\Phi([\beta])$.  This construction behaves well with composition.

\begin{figure}
  \centering
    \includegraphics[scale=0.3]{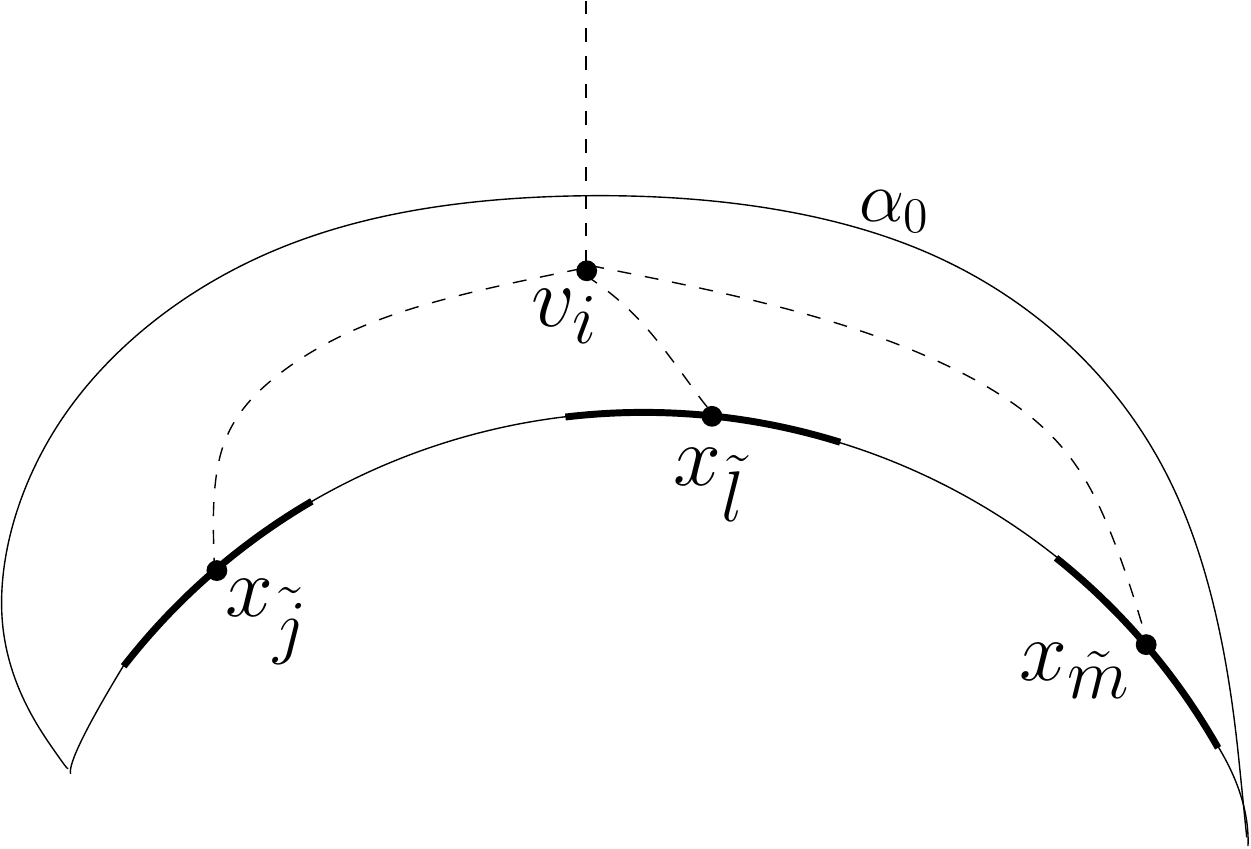}
  \caption{The components of $\Delta$ are marked by thin lines.  If $\alpha_0$ is essential, then there must be at least two components of $\Delta$ bounded by it on the boundary.}
  \label{trivalent}
\end{figure}

We now define the functor $\Psi$ on objects.  Let $W$ be the subspace $W:=\partial S-\Delta$.  The arcs in $\Acat_0$ have their endpoints in $W$, sometimes called the space of windows of $S$. Let $([\Gamma],[H])$ be an object of $\Ecat$.  Then for representatives $(\Gamma,H)$, the complement $S\setminus H(\Gamma)$ is a disjoint union of connected components, say $\coprod_i S_i$.  By construction the component $S_i$ is either a polygon which contains exactly one connected component of $W$ homeomorphic to an interval, or an annulus which contains exactly one connected component of $W$ homeomorphic to a circle.
Define an arc set $\psi(\Gamma, H)$ as follows.  If there is an edge $e_{ij}$ whose image under $H$ separates $S_i$ and $S_j$, then $\psi(\Gamma, H)$ has an arc $\alpha_{ij}$ crossing only $e_{ij}$.  The arc $\alpha_{ij}$ starts in $S_i\cap W$ and ends in $S_j\cap W$.  Notice that it might be that $i=j$ i.e., the arc starts and ends at the same component.  Now pull all arcs tight to make all their interiors non-intersecting and discard the arcs that crossed the leaves.  By construction this arc set is filling; thus, let $\Psi([\Gamma],[H])=[\psi(\Gamma, H)]$. As before this functor is well defined on objects and it is defined on morphisms in the a similar was as for $\Phi$.  Finally, the functors $\Phi$ and $\Psi$ are clearly inverses of each other.
\end{proof}

\begin{prop}
\label{A0_con}
The category $\Acat_0(\Sg,\Delta)$ is contractible.
\end{prop}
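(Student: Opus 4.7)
The plan is to adapt Hatcher's surgery flow from the proof of Theorem \ref{harer_contractible} to the subposet $\Acat_0(\Sg,\Delta)$ of filling arc systems. A key structural observation is that filling is preserved under enlargement of an arc system, so $\Acat_0$ is an upward-closed subposet of the poset of all arc systems. When $\Sg$ is a disk with $p+q\geq 1$ marked intervals on its boundary, the empty arc system is filling by convention and is the unique vertex of $|\Acat_0|$, making the claim trivial. Otherwise the arc complex $\Acat(\Sg,\Delta)$ is positive-dimensional, and we fix once and for all an essential arc $\alpha^*$ that can be extended to a filling arc system.

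Given any filling arc system $[\alpha]$, choose a representative in minimal position with $\alpha^*$. If the arcs of $\alpha$ are disjoint from $\alpha^*$, then $\alpha\cup\{\alpha^*\}$ is again a filling arc system. Otherwise we apply Hatcher's surgery: for each arc $\gamma$ of $\alpha$ intersecting $\alpha^*$, cut $\gamma$ at its intersection points with $\alpha^*$ and paste with short parallel copies of $\alpha^*$ to obtain new arcs disjoint from $\alpha^*$. The resulting arc system is disjoint from $\alpha^*$ and, crucially, still fills $\Sg$: each local surgery corresponds to further subdividing a polygon of $\Sg\setminus\alpha$ by a chord parallel to $\alpha^*$, so the polygonal decomposition is preserved (indeed refined). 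Carrying out this surgery continuously across all simplices of $|\Acat_0|$, as in Hatcher's original construction, gives a deformation retraction of $|\Acat_0|$ onto the subposet $\Acat_0' \subseteq \Acat_0$ of filling arc systems that contain $\alpha^*$.

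To finish, the subposet $\Acat_0'$ is naturally isomorphic, as a poset, to the analogous poset of filling arc systems on the cut surface $\Sg\setminus\alpha^*$ equipped with its induced window structure. Since this cut surface has strictly smaller arc-complex dimension than $\Sg$, an induction on this dimension reduces the problem to small base cases where the arc complex has dimension zero and $|\Acat_0|$ is either a single point or a contractible finite complex that can be verified by direct inspection.

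I expect the main obstacle to be the careful verification that Hatcher's surgery preserves the filling property, since this is an additional condition beyond what is needed for Theorem \ref{harer_contractible}. In particular, one must check that the new arcs produced by surgery, together with the surviving arcs of $\alpha$, again cut $\Sg$ into polygons rather than leaving a non-polygonal piece; this amounts to an explicit local analysis at each intersection point of $\alpha$ with $\alpha^*$, ensuring that each surgered arc lies in the refined polygonal piece one expects.
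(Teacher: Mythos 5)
Your approach is genuinely different from the paper's. The paper avoids adapting Hatcher's surgery to $\Acat_0$ altogether: it shows the inclusion $\iota:\Acat_0(\Sg,\Delta) \hookrightarrow \Pcat(\Acat(\Sg,\Delta))$ is a homotopy equivalence via Quillen's Theorem A, with the over-categories $[\alpha]\setminus\iota$ identified with products $\prod_i \Acat_0(S_i,\Delta_i)$ over the components of $\Sg$ cut along $\alpha$, which are contractible by induction on surface complexity; the result then follows from the Harer--Hatcher contractibility of the full arc complex $\Acat(\Sg,\Delta)$. Your plan instead runs Hatcher's flow directly inside $\Acat_0$, which is a sensible alternative --- it is the strategy the paper itself uses for the admissible complex $\Bcat$ in Proposition \ref{B0_con}, where there is no analogue of Harer's theorem to reduce to --- but it necessarily requires establishing the thing you flag as the main obstacle, and that is where the gap lies.

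The justification you give that surgery preserves filling is incorrect as stated. You assert the surgery ``corresponds to further subdividing a polygon of $\Sg\setminus\alpha$ by a chord parallel to $\alpha^*$, so the polygonal decomposition is preserved (indeed refined).'' But Hatcher's slide does not merely add chords: it \emph{replaces} the arc $\gamma$ by two new arcs and then deletes $\gamma$. Adding the new arcs refines the decomposition, but deleting $\gamma$ merges the two complementary regions that $\gamma$ separated, so the resulting decomposition is not a refinement of the original, and a ``refinement'' argument cannot show it is polygonal. A correct argument would have to analyze the hexagonal region $H$ bounded by the two new arcs, $\gamma$, and small boundary segments near an endpoint of $\alpha^*$, show that gluing $H$ across $\gamma$ to the polygon on the far side still yields a disk, and handle the corner cases (a new arc boundary-parallel and discarded; a new arc isotopic to an arc already in $\alpha$; both sides of $\gamma$ bordering the same polygon). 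You anticipate that a local analysis is needed, which is right, but the analysis you outline addresses where the new arcs land rather than the effect of deleting $\gamma$, which is the actual danger. A secondary point worth spelling out: Hatcher's flow lives on $|\Acat|$, while $|\Acat_0|$ is the full subcomplex of $|\Pcat(\Acat)|$ on filling arc systems, so you must say how the flow transfers, including noting that the intermediate ``enlarged'' simplices $\sigma_j$ in Hatcher's zig-zag are filling (immediate, since they contain a filling face).
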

\begin{proof}
We follow a similar proof to the one given by Giansiracusa on \cite{giansiracusa} on a similar poset. Again let $W:=\partial S -\Delta$.  We will prove this by induction on the complexity of the cobordism namely on the tuple $k=(g,n,f,p+q)$ ordered lexicographically, where $g$ is the genus of the surface, $f$ is the number of connected components of $W$ which are homeomorphic to a circle (i.e., the number of free boundary circles of $S$), $n=\#(\pi_0(\partial S))-f$ (i.e., the number of boundary components of $S$ which contain a marked point) ,and  $p$ and $q$ are the number of incoming, respectively outgoing boundaries of the cobordism.  For the rest of the proof we will denote this by $(S,c(S)=k)$.

We start the induction with $k=(0,1,0,p+q)$ for any $p+q\geq1$.
In this case, the category $\Acat_0(S,c(S)=k)$ is contractible since it has the empty set as initial element.  Now let $k=(g,n,f,p+q)>(0,1,0,r)$ for any $r\geq1$ and assume contractibility holds for all $k'<k$. Let $\Pcat(\Acat(S,c(S)=k))$ be the poset category obtained from the arc complex, $\Acat(S,c(S)=k)$, by barycentric subdivision and let $\iota$ denote the inclusion:
\[\iota:\Acat_0(S,c(S)=k)\cof\Pcat(\Acat(S,c(S)=k))\]
For an object $[\alpha]$ in $\Pcat(\Acat(S,c(S)=k))$, consider the over category $[\alpha]\setminus\iota$ which in this case is the full subcategory of $\Acat_0(S,c(S)=k)$ with objects:
\[\Ob([\alpha]\setminus\iota)=\lbrace[\beta]\in\Acat_0(S,c(S)=k)|[\beta]\geq[\alpha]\rbrace\]
Note first the set of objects is not empty, since every arc system can be extended to a filling arc system.  Let $\alpha$ be a representative of $[\alpha]$.  Then, $(S,c(S)=k)\setminus\alpha$ is a disjoint union of cobordisms $\coprod_{i=1}^m (S_i,c(S_i)=k_i)$ and there is an isomorphism of categories
\[\Phi:[\alpha]\setminus\iota\cong\prod_{i=1}^m\Acat_0(S_i,c(S_i)=k_i):\Psi\]
The mutually inverse functors are given as follows.  Let $[\beta]$ be an object of $[\alpha]\setminus\iota$.  We can choose a representative such that $\beta=\alpha\cup\lbrace\beta_0,\ldots \beta_n\rbrace$. Each essential arc $\beta_i$ is completely contained in some $S_j$.  Denote $\lbrace\beta_{0_i}\cdots\beta_{l_i}\rbrace$ the arc set contained in $S_i$; this set fills $S_i$ and it is possibly empty if $S_i$ is already a disk. Set $\Phi([\beta])=\prod_{i=1}^m[\beta_{0_i}\cdots\beta_{l_i}]$ with the natural map on morphisms. This is a well defined functor with inverse $\Psi(\prod_{i=1}^m[\beta_{0_i}\cdots\beta_{l_i}])=[\alpha]\cup_{i=1}^m[\beta_{0_i}\cdots\beta_{l_i}]$ and the natural map on morphisms.  Now, since all arcs of $\alpha$ are essential, then $k_i<k$ for all $1\leq i\leq m$; so by the induction hypothesis $\Acat_0(S_i,c(S_i)=k_i)$ is contractible and thus $[\alpha]\setminus\iota$ is a contractible category.  Then, Quillen Theorem A gives that $\iota$ is a homotopy equivalence.  Finally, since $k > (0,1,0,p+q)$ then then either $f>0$, $n>1$ or $g>0$ so $(S,c(S)=k)$ is neither a disk, or an annulus with one of its boundary components contained in $\Delta$. Therefore, by Theorem \ref{harer_contractible}  $\Acat(S,c(S)=k)$ is contractible, which finishes the proof.
\end{proof}

We now look into the case of the admissible fat graphs, and give a geometric interpretation for such condition.

\begin{dfn}
\label{arc_admissible}
Let $\Sg$  be a cobordism with $p$ incoming boundary components, $q$ outgoing boundary components and $p+q\geq 1$.  Let $\partial_{i_1} S, \partial_{i_2} S \ldots \partial_{i_k} S$ be the boundary components which are outgoing closed and $\alpha$ be a filling arc set in the cobordism $\Sg$.  We say that $\alpha$ is \emph{admissible} if the following conditions hold.
\begin{itemize}
\item[$i$]  $\alpha$ has a subset of arcs which cut $\Sg$ into $k$ components such that the $j$-th component contains in its interior all the arcs with an endpoint at $\partial_{i_j} S$ for all $1\leq j\leq k$.
\item[$ii$]  $\alpha$ does not contain an arc with both endpoints at $\partial_{i_j} S$ for any $1\leq j\leq k$.
\item[$iii$] Let $\alpha_{j_1}, \alpha_{j_2} \ldots \alpha_{j_r}$ be the arcs in $\alpha$ with an endpoint in $\partial_{i_j}S$. For all $1\leq j\leq k$, the arc set $\alpha$ also contains arcs $\beta_{j_1},\beta_{j_2} \ldots \beta_{j_{\tilde{r}}}$ such that the subspace 
\[\left((\cup_l \alpha_{j_l})\bigcup(\cup_l \beta_{j_l})\right)\bigcap (S-\partial_{i_j}S)\subset S
\]
is connected. 
\end{itemize}
Note that conditions $i-iii$ are well defined for arc systems.  We define $\Bcat(\Sg,\Delta)$ to be the sub-complex of $\Acat(\Sg,\Delta)$ 
given by the simplicial closure of admissible arc systems.  Similarly, we define $\Bcat_0(\Sg,\Delta)$ to be the \emph{sub-poset of $\Acat_0(\Sg,\Delta)$ of filling admissible arc systems}.
\end{dfn}

\begin{prop}
\label{B0_E}
Let $[\alpha]$ be a filling arc system in the cobordism $\Sg$ and let $([\Gamma_{\alpha}],[H_{\alpha}])$ be its corresponding open-closed marked fat graph under the isomorphism of Theorem \ref{A0_E}.  The arc system $[\alpha]$ is admissible if and only if the graph $[\Gamma_{\alpha}]$ is admissible.   
\end{prop}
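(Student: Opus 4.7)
The approach I would take is to establish a combinatorial dictionary between the arcs of $\alpha$ touching an outgoing closed boundary $\partial_{i_j} S$ and the boundary cycle sub-graph of the corresponding outgoing closed leaf $l_j$ in $\Gamma_\alpha$, and then match the three admissibility conditions one by one.

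First I would trace the boundary cycle of $l_j$ in $\Gamma_\alpha$ using the duality $\Phi$ from the proof of Proposition \ref{A0_E}. By construction of the dual graph, this boundary cycle corresponds, under the marking $H_\alpha$, to the boundary component $\partial_{i_j} S \subset S$, and its edges are precisely those dual to arcs in $\alpha$ having at least one endpoint on $\partial_{i_j} S$. Moreover, such an edge is traversed in the cyclic sequence of half-edges exactly once for each endpoint of the dual arc lying on $\partial_{i_j} S$: once if the arc has exactly one endpoint on $\partial_{i_j} S$ and twice otherwise.

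Next I would translate each admissibility condition on $\alpha$ into a graph-theoretic property of the outgoing closed boundary cycle sub-graphs of $\Gamma_\alpha$. Condition (ii) is equivalent to every edge of each such sub-graph being traversed exactly once, by the dictionary above, since an arc with both endpoints on $\partial_{i_j} S$ would force its dual edge to appear twice. Condition (iii) is equivalent to the sub-graph being a single connected cycle, i.e., every vertex has degree $2$ and the sub-graph is connected. To see this I would use a collar neighborhood $N_j$ of $\partial_{i_j} S$: the arcs $\alpha_{j_l}$ partition $N_j$ into sectors, each sector lies in some polygon $T_i$ of $S \setminus \alpha$, and the degree of the vertex $v_i$ in the boundary cycle sub-graph equals the number of sectors of $N_j$ contained in $T_i$. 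The existence of arcs $\beta_{j_l}$ making $(\cup_l \alpha_{j_l}) \cup (\cup_l \beta_{j_l}) \cap (S - \partial_{i_j} S)$ connected is exactly what ensures that each polygon adjacent to $\partial_{i_j} S$ contains a unique sector of $N_j$ (forcing vertex degree $2$) and that the sub-graph itself is connected. Finally, condition (i) is equivalent to the outgoing closed boundary cycle sub-graphs for different $j$ being pairwise disjoint subsets of $|\Gamma_\alpha|$: the separating arcs in (i) are dual to edges whose removal partitions $\Gamma_\alpha$ into regions, each containing exactly one sub-graph.

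Combining these translations, $\alpha$ is admissible if and only if every outgoing closed boundary cycle sub-graph of $\Gamma_\alpha$ is an embedded circle and these circles are pairwise disjoint, which is exactly the condition for $\Gamma_\alpha$ to be admissible.

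The main obstacle I expect is the precise analysis of condition (iii): carefully matching sector-to-polygon incidences with vertex degrees in the boundary cycle sub-graph, and showing that the existence of the $\beta$-arcs is exactly equivalent to every polygon adjacent to $\partial_{i_j} S$ containing a unique sector of $N_j$. The cleanest way to address this is the collar-sector analysis described above, combined with an inductive argument on the number of additional arcs needed to separate multi-sector polygons.
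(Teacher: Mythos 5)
Your overall strategy is the paper's own: set up the arc--graph duality from Proposition~\ref{A0_E}, then translate each of the three admissibility conditions on the arc system into a condition on the outgoing closed boundary cycle sub-graphs of $\Gamma_\alpha$. The dictionary you describe — that an edge of the $j$-th boundary cycle sub-graph is the dual of an arc meeting $\partial_{i_j}S$, appearing with multiplicity equal to the number of endpoints of that arc on $\partial_{i_j}S$ — is precisely the reduction the paper makes, and your treatments of conditions~(i) and~(ii) agree with the paper's (disjointness of images, and no double-traversed edges, respectively).

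The genuine gap is condition~(iii), which you yourself flag as the main obstacle. First, your reformulation ``(iii) $\Leftrightarrow$ the sub-graph is a single connected cycle'' is imprecise: the boundary cycle sub-graph is always connected (it is the image of a continuous map from $S^1$), so that part is vacuous, and what actually needs proving is that, given (ii), condition~(iii) is equivalent to every vertex of the sub-graph having degree $2$ — i.e., that the tracing map $c_j$ is injective rather than merely having a circle as image. Second, the equivalence ``(iii) $\Leftrightarrow$ each polygon adjacent to $\partial_{i_j}S$ contains a unique sector of the collar'' is exactly the hard step, and you only sketch it with a proposed ``inductive argument on the number of additional arcs.'' The paper does not use induction here; it takes a minimal set $\Xi$ of arcs containing the $\alpha_{j_l}$ for which $\Lambda = \Xi \cap (S - \partial_{i_j}S)$ has the fewest connected components, and argues that if $\Lambda$ is disconnected then two of the collar sectors $A_s, A_l$ lie in the same component of $S\setminus\alpha$, forcing a shared vertex of degree $\geq 4$ — with two explicit sub-cases (the components of $\Lambda$ being adjacent or nested around $\partial_{i_j}S$, plus their combinations). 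Until this case analysis, or something equivalent, is carried out, the core of the argument is missing; moreover, your proposed induction would need a careful base case and a verification that ``number of additional arcs'' strictly decreases, and it is not obvious how it handles the nested configuration, which is the subtle one. So: same approach as the paper, but the proof of the (iii) $\Leftrightarrow$ degree-$2$ equivalence is not actually given.
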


Before proving the proposition we will state an immediate corollary
\begin{cor}
There is an isomorphism of categories $\Bcat_0(\Sg,\Delta)^{op}\cong\Eadg$
\end{cor}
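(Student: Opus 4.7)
The plan is to establish the biconditional by translating each of the three arc-admissibility conditions (i), (ii), (iii) into a combinatorial condition on the outgoing closed boundary cycle sub-graphs of $\Gamma_\alpha$. For each outgoing closed boundary $\partial_{i_j}S$, let $\omega_j$ denote its boundary cycle in $\Gamma_\alpha$ and let $B_j \subset |\Gamma_\alpha|$ be the associated sub-graph (with the unique closed leaf $x_{i_j}$ and its leaf-edge excluded). Under the bijection of Proposition \ref{A0_E}, arcs of $\alpha$ correspond to non-leaf edges of $\Gamma_\alpha$ via transverse intersection, so the edges of $B_j$ are exactly the edges dual to the arcs $\alpha_{j_1}, \ldots, \alpha_{j_r}$ of $\alpha$ having at least one endpoint on $\partial_{i_j}S - I_{x_{i_j}}$; the vertices of $B_j$ are the polygon-vertices $v_T$ whose polygon $T$ has a boundary piece on $\partial_{i_j}S - I_{x_{i_j}}$.

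I then verify the three equivalences individually. First, (ii) is equivalent to each edge of $B_j$ having exactly one half-edge in $\omega_j$, since an edge has both half-edges in $\omega_j$ precisely when its dual arc has both endpoints on $\partial_{i_j}S$. Second, (i) is equivalent to the $B_j$'s being pairwise disjoint sub-graphs of $|\Gamma_\alpha|$: a cutting subset of $\alpha$ separating $S$ into components $N_1, \ldots, N_k$ with all arcs touching $\partial_{i_j}S$ lying in $N_j$ forces the vertices and edges of $B_j$ to lie entirely inside $N_j$, and conversely disjoint embedded circles $B_j$ bound pairwise disjoint annular collars of the $\partial_{i_j}S$'s in $S$, from which the cutting subset is read off as the arcs dual to the edges of the $B_j$'s. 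Third, (iii) is equivalent to each $B_j$ being homeomorphic to $S^1$, equivalently each vertex of $B_j$ having local degree exactly two: a vertex $v_T$ has degree higher than two precisely when the polygon $T$ borders $\partial_{i_j}S$ in two or more separated pieces, and this multi-piece pathology is ruled out exactly when there exist arcs $\beta_{j_l} \in \alpha$ whose interiors separate the pieces, which dually is the connectedness of $(\bigcup_l \alpha_{j_l}) \cup (\bigcup_l \beta_{j_l})$ in $S - \partial_{i_j}S$ demanded by (iii). Combining these three equivalences with the definition of admissibility for $\Gamma_\alpha$ (pairwise disjoint embedded-circle outgoing closed boundary cycle sub-graphs) yields the biconditional.

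The main obstacle is the correct matching of condition (iii) on the graph side: conditions (i) and (ii) alone do not suffice, since a single polygon $T$ may still border $\partial_{i_j}S$ in multiple disjoint pieces and contribute a branching vertex of degree four or higher in $B_j$. One must carefully verify that (iii), phrased purely in terms of arcs and their endpoints, precisely rules out this multi-piece pathology via suitable separating $\beta$-arcs in $\alpha$. A secondary technicality is the careful bookkeeping of the polygon containing $x_{i_j}$, which plays the role of the "closing" vertex of the cyclic sub-graph $B_j$, and whose two bordering arcs are precisely those flanking $I_{x_{i_j}}$ on $\partial_{i_j}S$.
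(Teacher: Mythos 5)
Your proposal takes essentially the same route as the paper: the corollary is proved by establishing the biconditional of Proposition \ref{B0_E} (arc-admissibility $\Leftrightarrow$ graph-admissibility, condition by condition) and then restricting the isomorphism of Proposition \ref{A0_E} to the full subcategories of admissible objects. The paper translates the graph-side conditions via the boundary-cycle tracing maps $c_j\colon S^1\to|\Gamma_\alpha|$ and their injectivity and disjointness, while you phrase them equivalently via the boundary cycle sub-graphs $B_j$ and their local vertex degrees; these are the same argument in different clothing. The one point to make explicit: the paper translates condition (iii) only \emph{given that} (ii) already holds (so that ``$c_j$ does not self-intersect at a vertex'' is the residual obstruction once edge-doubling is excluded), whereas you present ``(iii) $\Leftrightarrow$ $B_j\cong S^1$, equivalently all vertices have degree two'' as if it were independent of (ii); strictly speaking the degree-two criterion characterizes injectivity of $c_j$ only after (ii) has ruled out doubled edges, so this conditioning should be stated.
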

\begin{proof}
This isomorphism is just a restriction of the isomorphism of theorem \ref{A0_E}, which is well defined by the proposition above.
\end{proof}

\begin{proof}[Proof of Proposition \ref{B0_E}] 
Let $\partial_{i_1} S, \partial_{i_2} S \ldots \partial_{i_k} S$ be the boundary components of $\Sg$ which are outgoing closed, let $\alpha$ and $\Gamma_\alpha$ be representatives of the arc system and fat graph of the theorem.  Recall that $\Gamma_{\alpha}$ is an admissible fat graph if the boundary cycle sub-graphs corresponding to the $k$ outgoing closed leaves, say $C_1, C_2 \ldots C_k$ are disjoint circles in $|\Gamma_{\alpha}|$.  For this proof it will be convenient to reinterpret the admissibility condition in a different but equivalent way.  Each boundary cycle $\omega_j$ determines a map 
\[c_j:S^1\to  |\Gamma_\alpha|\]
well defined up to homeomorphism.  Informally, this map is given by tracing the boundary cycle $\omega_j$ along $|\Gamma_\alpha|$.  More precisely, let $\omega_j=(l_j,\bar{l}_j, h_1,h_2,\ldots, h_{n_j})$, where $\{l_j,\bar{l}_j\}$ is the leave of this boundary cycle.  Let $e_i$ denote the edge corresponding to the half edge $h_i$.  Subdivide $S^1$ into $n_j$ consecutive oriented intervals $I_i$, for $1\leq i \leq n_j$, such that their orientation coincides with the orientation of $S^1$.  Let $0_i$ denote the start point of $I_i$.  Then $c_j:S^1\to  |\Gamma_\alpha|$ is the map that sends $I_i\to |e_i|$ via a homeomorphism such that $c_j(0_i)=s(h_i)$.  The fat graph $\Gamma_\alpha$ is admissible if and only if all $c_j$'s are injective and their images are disjoint.

We will show that condition $i$ is equivalent to saying that all $c_j$'s have disjoint images and conditions $ii$ and $iii$ are equivalent to saying that each $c_j$ is injective. 

Note that condition $i$ does not hold for $\alpha$ if and only if for some $i_j$ and $i_s$ with $j \neq s$ at least one of the following hold  
\begin{itemize}
\item[$(a)$] There is an arc in $\alpha$, say $\alpha_{j,s}$, connecting $\partial_{i_j}S$ and $\partial_{i_s}S$.
\item[$(b)$] There is a component in $\Sg-\alpha$, say $T_{j,s}$, which has as part of its boundary: an arc $\alpha_j$ with an endpoint at $\partial_{i_j}S$ and an arc $\alpha_s$ with an endpoint at $\partial_{i_s}S$.  Let $v_{j,s}$ denote a marked point in the interior $T_{j,s}$.
\end{itemize}
If $a$ holds, then $\Gamma_\alpha$ must have an edge $e_{j,s}$ constructed by crossing $\alpha_{j,s}$.  The edge $e_{j,s}$, belongs to the $i_j$-th and $i_s$-th boundary cycles i.e., $c_j$ and $c_s$ intersect at the edge $e_{j,s}$. If $b$ holds, then there must be an edge $e_j$ (respectively $e_s$) constructed by crossing the boundary of $T_{j,s}$ at $\alpha_j$ (respectively $\alpha_s$) and connecting to $v_{j,s}$.  Moreover, the edges $e_j$ (resp. $e_s$) belongs to the $i_j$-th (resp. $i_s$-th) boundary cycles. Thus, $c_j$ and $c_s$ intersect at the vertex $v_{j,s}$. Finally, notice that if two outgoing boundary cycles of $\Gamma_\alpha$ intersect at an edge (respectively at a point) then condition $a$ (respectively $b$) hold on $\alpha$. Therefore condition $i$ is equivalent to saying that the image of all $c_j$'s are disjoint.

We will show now that condition $ii$ is equivalent to saying that the map $c_j$ does not intersect itself at an edge i.e., for any edge $e$ the restriction 
\[c_j|_{e}:c_j^{-1}(e)\to |\Gamma_{\alpha}|\]
is injective.
If $ii$ does not hold, then there must be and arc $\alpha_j$ in $\alpha$ that starts and ends at $\partial_{i_j}S$.  Let $e_j$ be its corresponding edge on $\Gamma_\alpha$.  Recall that $\alpha_j$ and $e_j$ cross exactly once.  Then $\alpha_j$ starts on one side of $e_j$ crosses to the other side at the intersection point and then returns to the initial side without any additional crossing.  This means that both sides of $e_j$ belong to the same boundary cycle i.e., $c_j$ intersects itself on the edge $e_j$.  The inverse assertion follows similarly.

Assume condition $ii$ holds in $\alpha$ for some $1\leq j \leq k$.  Then $c_j$ does not intersect itself at an edge, but it could still intersect itself at a point i.e., there could be a vertex $v$ such that the restriction 
\[c_j|_{v}:c_j^{-1}(v)\to |\Gamma_{\alpha}|\]
is not injective.  
Let $\alpha_{j_1}, \alpha_{j_2} \ldots \alpha_{j_r}$ be the arcs in $\alpha$ with an endpoint in $\partial_{i_j}S$.  Since condition $ii$ holds, each of these arcs must have their other end point in a different boundary component. The orientation of the surface together with the marked point on $\partial_{i_j}S$ give an ordering of the arcs.  Assume that the labeling given above respects this order.  Let $A_1,A_2\ldots A_r$ denote the areas of the surface between these arcs in that given order, see Figure \ref{caseii} below. 
\begin{figure}[h]
  \centering
    \includegraphics[scale=0.3]{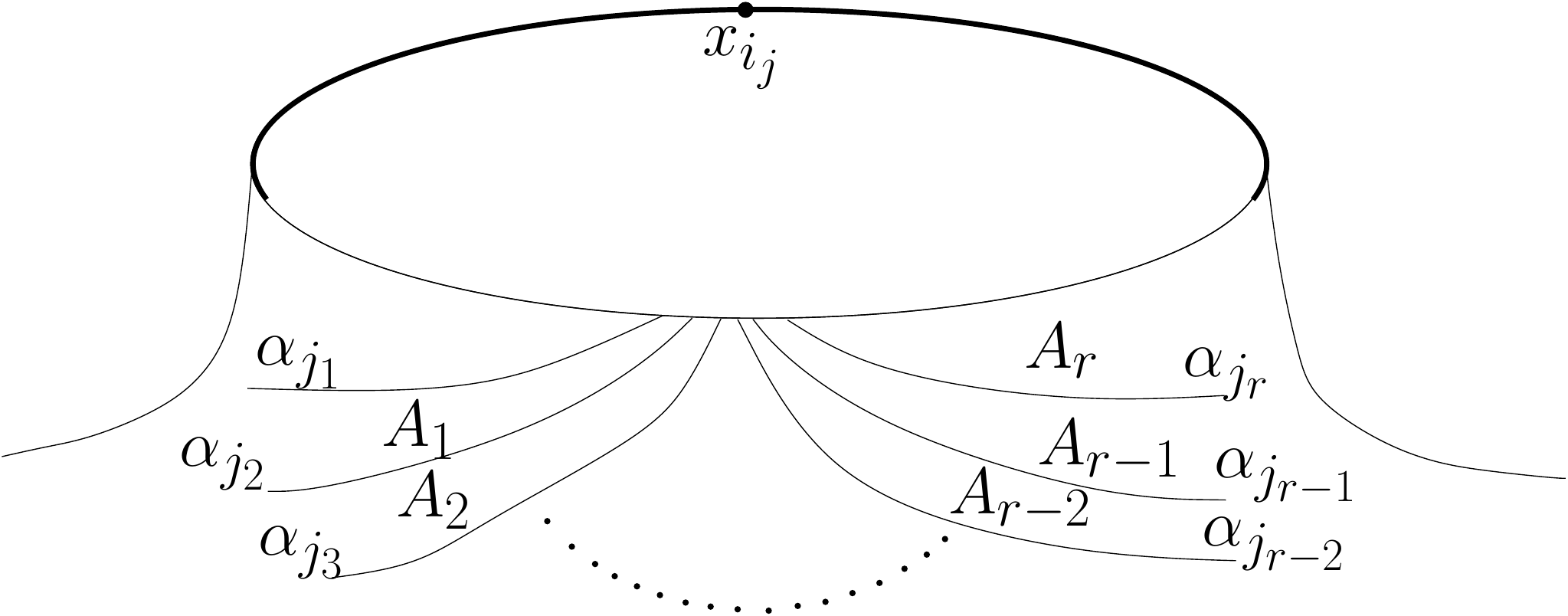}
  \caption{Local picture at the outgoing closed boundary $\partial_{i_j}S$ for an arc system $\alpha$ in which condition $ii$ of Definition \ref{arc_admissible} holds.}
  \label{caseii}
\end{figure}
Let $\Xi$ be the smallest subset of $\alpha$ such that: ${\alpha_{j_1}, \alpha_{j_2} \ldots \alpha_{j_r}}\subset\Xi\subset\alpha$ and $\Lambda:=\Xi\cap(S-\partial_{i_j}S)$ 
has a minimum number of connected components.  If condition $iii$ holds then $\Lambda$ is connected.  The areas $A_s$ and $A_l$ for $1\leq s\neq l \leq r$ belong to the same connected component in $\Sg-\alpha$, if and only if there is a path in $\Sg$ connecting them that does not intersect with $\alpha$, and this happens if and only if $\Lambda$ is not connected.  Therefore, if $iii$ holds each area $A_s$ contains a different vertex $v_s$ of $\Gamma_\alpha$.  
Let $e_s$ denote the edge in $\Gamma_\alpha$ that crosses $\alpha_{j_s}$, see Figure \ref{caseii_admissible}.  Then the $i_j$-th boundary corresponds to one side of the edges $e_s$ for $1\leq s\leq r$ i.e., $c_j$ is injective. 
\begin{figure}
  \centering
    \includegraphics[scale=0.3]{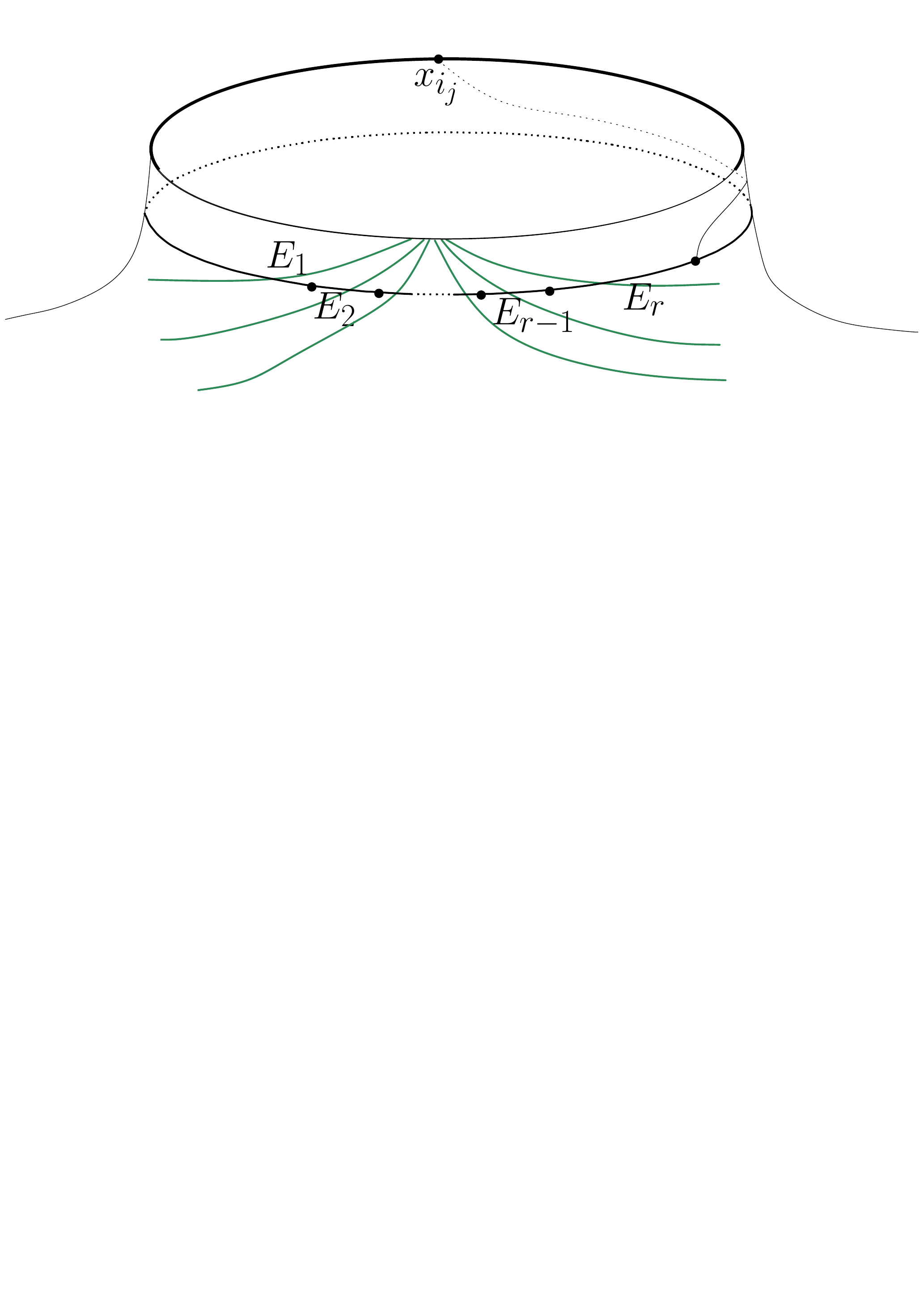}
  \caption{Local picture at the outgoing closed boundary $\partial_{i_j}S$ for an arc system $\alpha$ in which conditions $ii$ and $iii$ of Definition \ref{arc_admissible} hold.   The arcs are shown in green and their corresponding edges in black.}
  \label{caseii_admissible}
\end{figure}

If condition $iii$ does not hold then $\Lambda$ is not connected.  Assume for simplicity first that $\Lambda$ has two connected components. Then $\Xi$ must fall in one of the two following cases
\begin{itemize}
\item[$(a)$]  The two components of $\Lambda$ are next to each other in $\Sg$ i.e., there is an $t$ such that ${\alpha_{j_1}, \alpha_{j_2} \ldots \alpha_{j_t}}$ belong to one component and ${\alpha_{j_{t+1}}, \alpha_{j_{t+2}} \ldots \alpha_{j_r}}$ to the other (see Figure \ref{not_ad_next}.) Then by the argument above each $A_s$ for $1\leq s \leq t-1$ or $t+1\leq s\leq r-1$ contains a different vertex $v_s$ of $\Gamma_\alpha$.  However, $A_t$ and $A_r$ belong to the same connected component in $\Sg-\alpha$ so they both contain only one vertex, say $v$, of $\Gamma_\alpha$ which is connected to $\partial_{i_j}S$.  As before, the $i_j$-th boundary corresponds to one side of the edges $e_s$ for $1\leq s\leq r$ but these edges intersect at the point $v$.

\item[$(b)$] The two components of $\Lambda$ are nested in $\Sg$ i.e., there are $t < l$ such that 
$ \alpha_{j_1}, \alpha_{j_2} \ldots  $ 
$ \alpha_{j_t-1},\alpha_{j_{l+1}}, \alpha_{j_{l+2}} \ldots \alpha_{j_r} $ 
belong to one component and 
${\alpha_{j_{t}}, \alpha_{j_{t+1}} \ldots \alpha_{j_l}}$ 
to the other (see Figure \ref{not_ad_nested}).  Then similarly, each $A_s$ contains a different vertex $v_s$ of $\Gamma_\alpha$ except for $s=t-1$ and $s=l$, since $A_{t-1}$ and $A_l$ belong to the same connected component in $\Sg-\alpha$.  So they both contain only one vertex say $v$ of $\Gamma_\alpha$.  Then as before, the $i_j$-th boundary cycle intersects itself at $v$.
\end{itemize}
\begin{figure}
  \centering
    \includegraphics[scale=0.3]{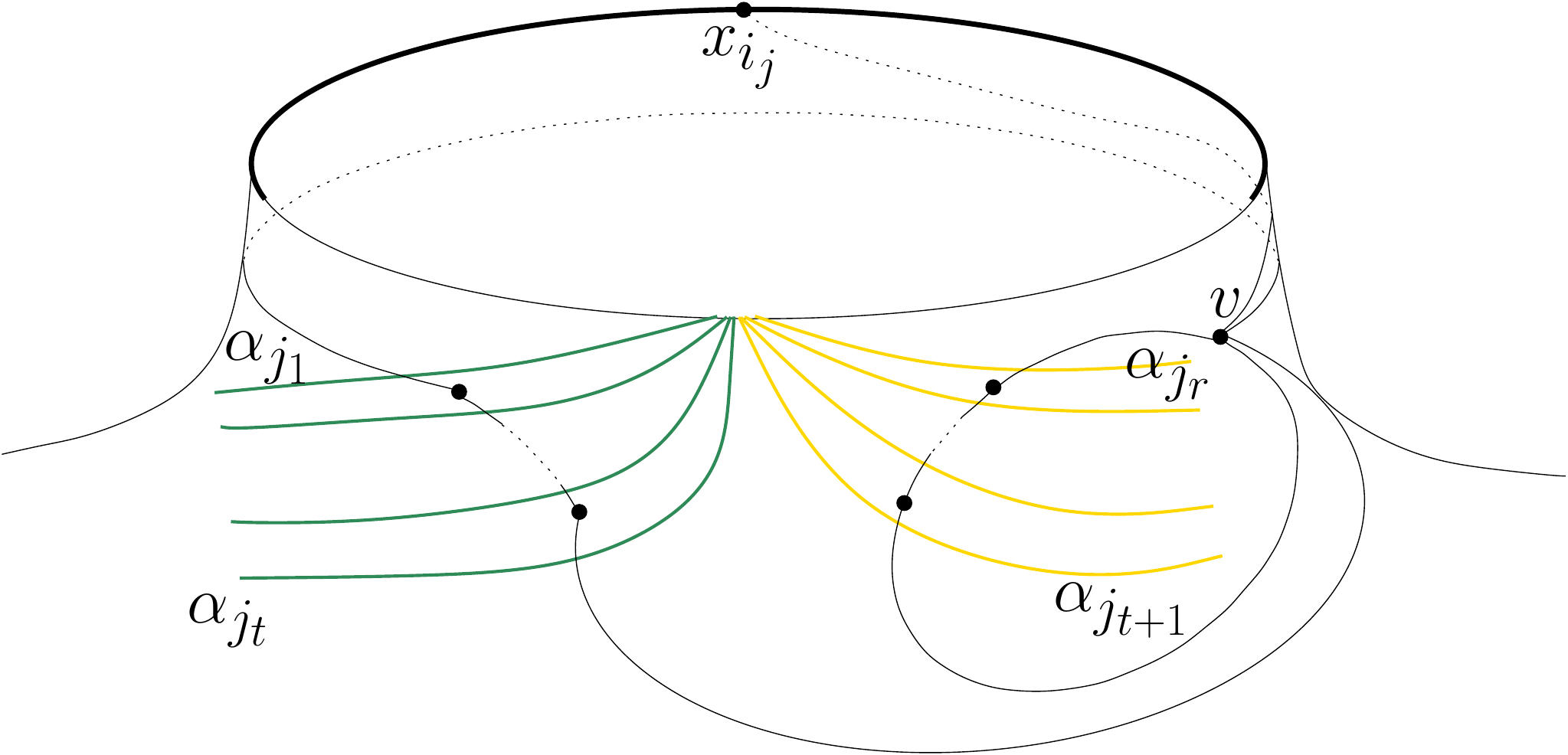}
  \caption{Local picture at the outgoing closed boundary $\partial_{i_j}S$ for an arc system $\alpha$ in which conditions $ii$ holds but condition $iii$ does not.  The arcs are shown in green and yellow to distinguish the connected components they belong to in $\Xi$.  The picture represents case $a$ in which the components are next to each other. The edges corresponding to the arcs are shown in black.}
  \label{not_ad_next}
\end{figure}
\begin{figure}
  \centering
    \includegraphics[scale=0.3]{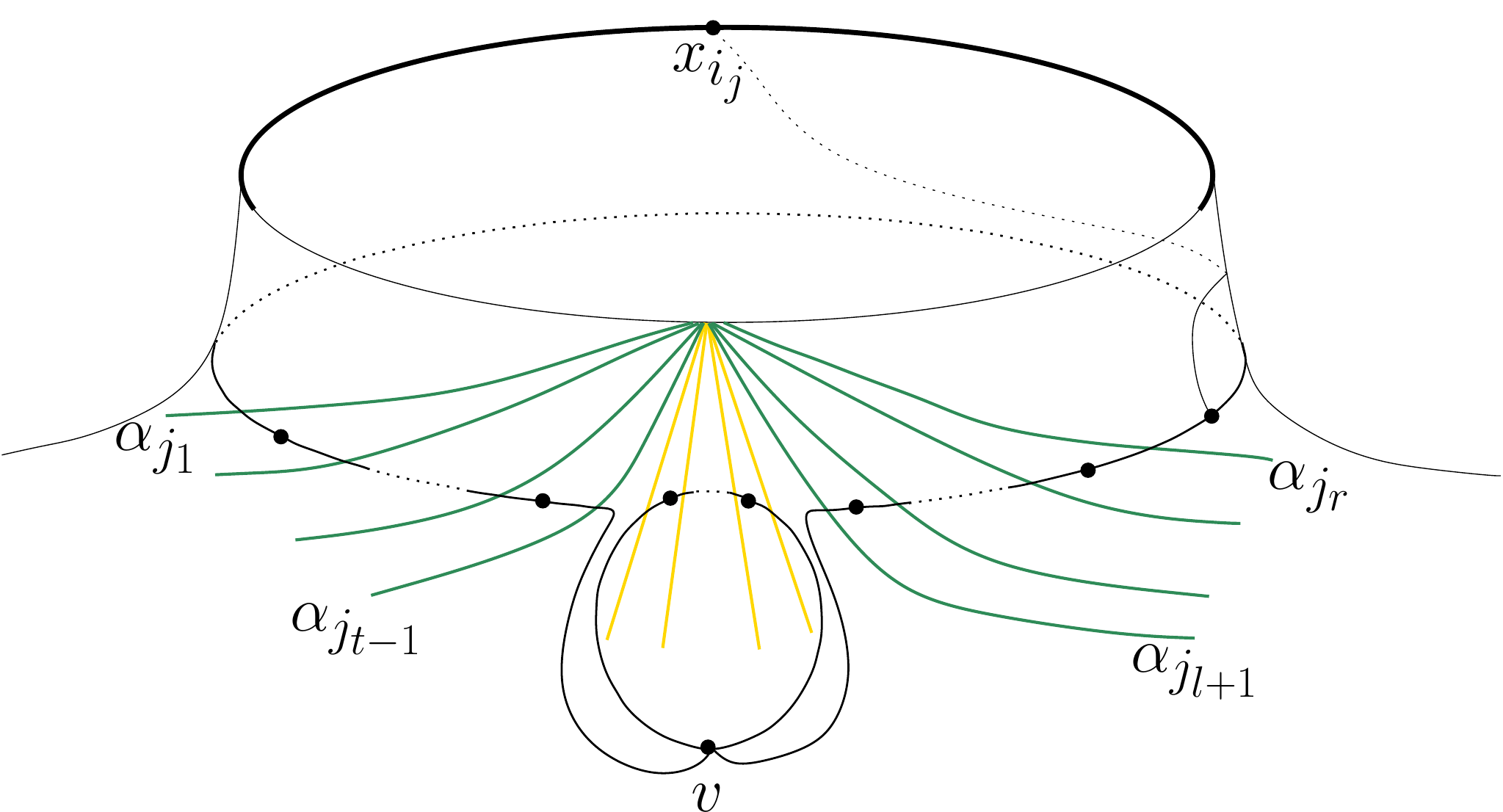}
  \caption{Local picture at the outgoing closed boundary $\partial_{i_j}S$ for an arc system $\alpha$ in which conditions $ii$ holds but condition $iii$ does not.  The arcs are shown in green and yellow to distinguish the connected components they belong to in $\Xi$.  The picture represents case $b$ in which the components are nested. The edges corresponding to the arcs are shown in black.}
  \label{not_ad_nested}
\end{figure}
The case for more connected components is a combination of these two cases giving that the $i_j$-th boundary cycle intersects itself in multiple points. Therefore conditions $ii$ and $iii$ together are equivalent to saying that the map $c_j$ is injective, which finishes the proof.
\end{proof}

\begin{prop}
\label{B0_con}
If $\Sg$ is an open-closed cobordism which is not a disk, and whose boundary is not completely outgoing closed, then the complex $\Bcat(\Sg,\Delta)$ is contractible.
\end{prop}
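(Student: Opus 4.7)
The plan is to adapt Hatcher's surgery-based proof of contractibility of the arc complex, working entirely inside the admissible subcomplex $\Bcat(\Sg,\Delta)$. First I would produce an explicit admissible filling arc system $A$ to serve as a base point: for each outgoing closed boundary $\partial_{i_j}S$, choose a small neighborhood $N_j$ cut off from the rest of $\Sg$ by a single essential arc $\gamma_j$, and populate $N_j$ with a minimal set of parallel arcs joining $\partial_{i_j}S$ to $\gamma_j$; this configuration satisfies conditions $(i)$, $(ii)$, $(iii)$ of Definition \ref{arc_admissible} locally at $\partial_{i_j}S$. I then extend to a filling arc system by filling the complement $\Sg\setminus \bigcup_j N_j$, using the hypothesis that $\Sg$ is not a disk and has at least one non-outgoing-closed boundary component to ensure this complement is a non-degenerate surface admitting essential arcs.

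Next, for any simplex $[\beta]=[\beta_0,\ldots,\beta_m]$ of $\Bcat$, I would represent its arcs so that they intersect $A$ transversally and minimally. Following Hatcher, at each intersection point between some $\beta_i$ and some $\alpha\in A$ perform the standard surgery: cut $\beta_i$ and reattach along the two sides of $\alpha$, replacing $\beta_i$ by two sub-arcs pushed slightly off $\alpha$. Iterating, I obtain an arc system $\beta'$ which is disjoint from $A$. The claim is that $\beta'\cup A$ is admissible, so that the simplex it spans lies inside $\Bcat$ and provides a straight-line path from $[\beta]$ to $[A]$.

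To verify that $\beta'\cup A$ is admissible, note that condition $(i)$ is immediate since the arcs $\gamma_j\subset A$ already separate each $\partial_{i_j}S$ from the rest of $\Sg$, so no arc of $\beta'$ can cross this separation without being cut by the surgery; condition $(ii)$ is preserved because $\beta$ originally satisfied it and Hatcher surgery along $A$ cannot produce a new self-loop at $\partial_{i_j}S$; condition $(iii)$ is preserved because the parallel arcs already present inside each $N_j$ force the corresponding subspace $\Lambda$ to be connected. Making the surgery continuous in the barycentric coordinates gives a deformation retraction $r\colon \Bcat\to \mathrm{Star}(A)$ onto the simplex spanned by $A$, which is contractible.

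The main obstacle I anticipate is the verification that condition $(iii)$ is preserved by the surgery in full generality. Unlike $(i)$ and $(ii)$, which are essentially local disjointness/separation conditions, condition $(iii)$ is a global connectedness condition on a subspace $\Lambda\subset\Sg$ built from arcs of the system, and surgery reshapes these arcs in a non-local way. The delicate step is to show that one can always find arcs in $\beta'$ (possibly supplemented by arcs of $A$ itself) playing the role of the $\beta_{j_l}$ of Definition \ref{arc_admissible}, so that the union remains connected; this should follow from the fact that $A$'s admissibility structure inside each $N_j$ is as tight as possible and therefore dominates any admissible structure $\beta$ may have contributed near $\partial_{i_j}S$. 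If this direct flow argument turned out to be too rigid, a back-up plan would be to proceed by induction on the complexity $c(S)$, as in the proof of Proposition \ref{A0_con}, cutting along a single admissible arc and combining contractibility of the pieces via a Quillen Theorem A argument.
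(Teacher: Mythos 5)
Your strategy (pick an admissible filling base system $A$ and surger an arbitrary admissible system onto it) is genuinely different from the paper's, and it is the harder route. The paper also runs Hatcher's flow, but its target is a \emph{single} essential arc $\beta$, chosen with both endpoints on boundary components that are \emph{not} outgoing closed; this choice is possible precisely by the hypothesis that the boundary is not completely outgoing closed. Because $\beta$ never touches an outgoing closed boundary, the surgery of any $\alpha_i$ along $\beta$ produces new arcs $\tilde\alpha_{i_1},\tilde\alpha_{i_2}$ that also avoid outgoing closed boundaries, and the arcs incident to those boundaries are left untouched; hence conditions $(i)$--$(iii)$ of Definition \ref{arc_admissible} hold for every intermediate simplex $\sigma_j$, $\tilde\sigma_j$ of the flow \emph{for free}. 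You, by contrast, take a base $A$ that by construction must have arcs hitting the outgoing closed boundaries (otherwise $A$ cannot be filling and admissible), so your surgery operates exactly in the region where admissibility is a constraint, and you then have to argue it is preserved.

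This is where the gap lies, and it is a bit broader than you indicate. Your worry about condition $(iii)$ is well placed, but you would also need to check admissibility of every \emph{intermediate} arc system $\sigma_j$ in Hatcher's flow, not just of the endpoint $\beta'\cup A$ (the flow passes linearly through those simplices, so each must lie in $\Bcat$). Worse, once $\beta'$ is disjoint from the filling system $A$, most or all of its arcs live inside polygons of $\Sg\setminus A$ and are boundary parallel or isotopic to arcs of $A$, so $\beta'\cup A$ degenerates essentially to $A$; the interesting content of the flow is entirely in its intermediate stages, which is precisely where you have not secured admissibility. Finally, surgering along a multi-arc base requires a canonical ordering of the surgeries to make the flow continuous in barycentric coordinates, an ingredient Hatcher's single-arc argument gives you automatically and you would have to supply. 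The back-up induction on complexity does not immediately help either: in the paper that induction is used (in Proposition \ref{A0_con} and its admissible analogue) to pass from the barycentric poset to the complex via Quillen's Theorem A, and its base case still relies on contractibility of the full complex $\Bcat(\Sg,\Delta)$, which is exactly the present claim. The clean fix is the paper's: choose the Hatcher target $\beta$ to avoid outgoing closed boundaries altogether, so that the surgery never needs to know about admissibility.
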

 
 Before proving the proposition we will state a corollary.
\begin{cor}
If $\Sg$ is an open-closed cobordism whose boundary is not completely outgoing closed, then poset category $\Bcat_0(\Sg,\Delta)$ is contractible.
\end{cor}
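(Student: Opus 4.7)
The plan is to mimic the argument of Proposition \ref{A0_con}, threading the admissibility condition through. Without loss of generality, assume $\Sg$ is connected, since $\Bcat_0(\Sg,\Delta)$ factors as a product over connected components. First, handle the disk case separately: if $\Sg$ is a disk, its single boundary component cannot be partly outgoing closed (a closed parametrization covers the entire boundary circle), and since the case where the whole boundary is outgoing closed is excluded, $\Sg$ has no outgoing closed boundary at all. Hence the admissibility conditions are vacuous, and by the convention in the definition of $\Acat_0$, the empty arc system is the unique filling admissible arc system, so $\Bcat_0(\Sg,\Delta)=\{[\emptyset]\}$ is contractible with an initial object.

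For non-disk $\Sg$, consider the inclusion
\[\iota:\Bcat_0(\Sg,\Delta)\hookrightarrow\Pcat(\Bcat(\Sg,\Delta)),\]
where the codomain is the poset obtained from $\Bcat(\Sg,\Delta)$ by barycentric subdivision. By Proposition \ref{B0_con} the codomain is contractible, so it suffices to show that $\iota$ is a homotopy equivalence. By Quillen's Theorem A, this reduces to showing that for every admissible arc system $[\alpha]$ in $\Pcat(\Bcat(\Sg,\Delta))$, the under-category
\[[\alpha]\setminus\iota=\{[\beta]\in\Bcat_0(\Sg,\Delta)\mid[\beta]\geq[\alpha]\}\]
is contractible.

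Given a representative $\alpha$, cutting $\Sg$ along $\alpha$ produces a disjoint union of sub-cobordisms $\coprod_{i=1}^m(S_i,\Delta_i)$, and by condition (i) of admissibility for $\alpha$, each outgoing closed boundary component of $\Sg$ lies in the interior of a unique $S_i$. Exactly as in the proof of Proposition \ref{A0_con}, one expects mutually inverse functors exhibiting an isomorphism of categories
\[[\alpha]\setminus\iota\cong\prod_{i=1}^m\Bcat_0(S_i,\Delta_i),\]
obtained by decomposing an extension $\beta=\alpha\cup\{\beta_1,\ldots,\beta_n\}$ into its restrictions $\beta\cap S_i$, and conversely by gluing admissible filling systems on each piece along $\alpha$. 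The main technical obstacle is verifying that this decomposition preserves admissibility: $[\beta]$ is admissible and filling in $\Sg$ if and only if each $(\beta\cap S_i)$ is admissible and filling in $(S_i,\Delta_i)$. Here condition (i) for $\beta$ is automatic because $\alpha$ already separates the outgoing closed boundaries into distinct components, while conditions (ii) and (iii) of Definition \ref{arc_admissible} are local near each outgoing closed boundary and therefore translate directly between $\Sg$ and the unique $S_i$ containing that boundary. This should reduce to a case analysis analogous to the one carried out in the proof of Proposition \ref{B0_E}.

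Finally, since $[\alpha]$ is a non-empty simplex, each $S_i$ has some new boundary coming from the cut, which is neither free nor outgoing closed; in particular no $S_i$ has boundary entirely outgoing closed, so the inductive hypothesis applies. Furthermore, each $S_i$ has strictly smaller complexity than $\Sg$ in the sense used in Proposition \ref{A0_con}, so by induction each factor $\Bcat_0(S_i,\Delta_i)$ is contractible, whence the product $\prod_i\Bcat_0(S_i,\Delta_i)$ is contractible. This finishes the verification of Quillen's Theorem A and yields the contractibility of $\Bcat_0(\Sg,\Delta)$.
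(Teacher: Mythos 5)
Your high-level strategy coincides with the paper's (induct on complexity, apply Quillen's Theorem A to the inclusion of $\Bcat_0$ into the barycentric subdivision of $\Bcat$, whose contractibility comes from Proposition~\ref{B0_con}, and decompose over-categories as products over the pieces of a cut). However, there is a genuine gap in the key step. You write that it suffices to check Quillen's condition ``for every \emph{admissible} arc system $[\alpha]$ in $\Pcat(\Bcat(\Sg,\Delta))$,'' and your analysis of $[\alpha]\setminus\iota$ leans on condition~(i) of Definition~\ref{arc_admissible} being available for $\alpha$ itself. But $\Bcat(\Sg,\Delta)$ is defined as the \emph{simplicial closure} of admissible arc systems, so $\Pcat(\Bcat(\Sg,\Delta))$ contains many objects $[\alpha]$ that are not admissible (they are merely faces of admissible systems). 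Quillen's Theorem A requires the over-category to be contractible for \emph{all} such $[\alpha]$. For a non-admissible $\alpha$ the cut $\Sg\setminus\alpha$ need not separate the outgoing closed boundaries into distinct pieces, so a single $S_i$ can carry several of them; then condition~(i) for $\beta$ is most certainly \emph{not} automatic, and the claimed ``locality'' of conditions~(ii)--(iii) is not by itself a substitute.

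There is also an imprecision even in the cases you do treat: when $\alpha$ is admissible and has arcs with endpoints on an outgoing closed boundary $\partial_{i_j}S$, cutting along \emph{all} of $\alpha$ (not only the subsystem furnished by condition~(i)) chops $\partial_{i_j}S$ into segments, so it is not a boundary circle of any $S_i$, let alone a component ``in the interior'' of one. The decomposition $[\alpha]\setminus\iota\cong\prod_i\Bcat_0(S_i,\Delta_i)$ does appear to hold in general once one lets each $S_i$ inherit whatever outgoing closed boundary circles of $\Sg$ survive the cut intact, and a short supporting lemma is still needed: for arbitrary $\alpha$ a face of an admissible system, $\beta\supset\alpha$ is filling and admissible in $\Sg$ if and only if each $(\beta\setminus\alpha)\cap S_i$ is filling and admissible in $S_i$. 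That lemma, argued for example via the fat-graph translation in Proposition~\ref{B0_E}, is what actually closes the argument; without it, and without dropping the restriction to admissible $\alpha$, the Quillen argument is incomplete.
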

\begin{proof}
For the case of one boundary component, the contractibility of $\Bcat_0(\Sg,\Delta)$ follows immediately, since this just reduces to the case of $\Acat_0(\Sg,\Delta)$.  The general case, follows by induction just as in the proof of Proposition \ref{A0_con} from the contractibility of $\Bcat(\Sg,V)$.
\end{proof}

\begin{proof}[Proof of Proposition \ref{B0_con}]
Note first that by construction $(\Sg,\Delta)$ is never an annulus with one of its boundary components contained in $\Delta$.  Moreover, if $\Sg$ has no outgoing closed boundary components or it is an annulus with one outgoing boundary component, then $\Bcat(\Sg,\Delta)=\Acat(\Sg,\Delta)$ and the result follows by the contractibility of the arc complex.

In all other cases the result follows directly as a reduction of Hatcher's proof of the contractibility of the arc complex given in \cite{hatcher_arc}, so we just give a sketch of this proof.  We consider first the case where $\Sg$ has at most one marked point in each boundary component.  Hatcher writes a flow of the arc complex $\Acat(S,\Delta)$ onto the star of a vertex.  We will sketch the construction of this flow and see that it restricts to  $\Bcat(\Sg,\Delta)$ if one chooses the vertex correctly; which finishes the proof in this special case since the closure of the star of a vertex is contractible.  Given that not all the boundary is outgoing closed and $\Sg$ is not a disk or an annulus, we can find an essential arc $\beta$ that starts and ends at a boundary components which are not outgoing closed (possibly the same one).  Hatcher's construction gives a continuous flow 
\[\Bcat(\Sg,\Delta)\to\overline{Star([\beta])}\]
In order to construct this flow let $\tilde{\sigma}_1=[\alpha]$ be a $k-$simplex of $\Bcat(\Sg,\Delta)$ and choose a representatives $\lbrace\alpha_0\ldots\alpha_k\rbrace$ with minimal intersection with $\beta$.  Let $x_1,\ldots,x_l$ denote the intersection points of $\alpha$ and $\beta$ occurring in that order.  The first intersection point $x_1$ corresponds to an arc $\alpha_i$.  Let $\tilde{\alpha}_{i_1}$ and $\tilde{\alpha}_{i_2}$ be the arcs obtained by sliding $\alpha_i$ along $\beta$ all the way to the boundary of $\beta$, see figure \ref{arc_flow}.
\begin{figure}[h!]
  \centering
    \includegraphics[scale=0.4]{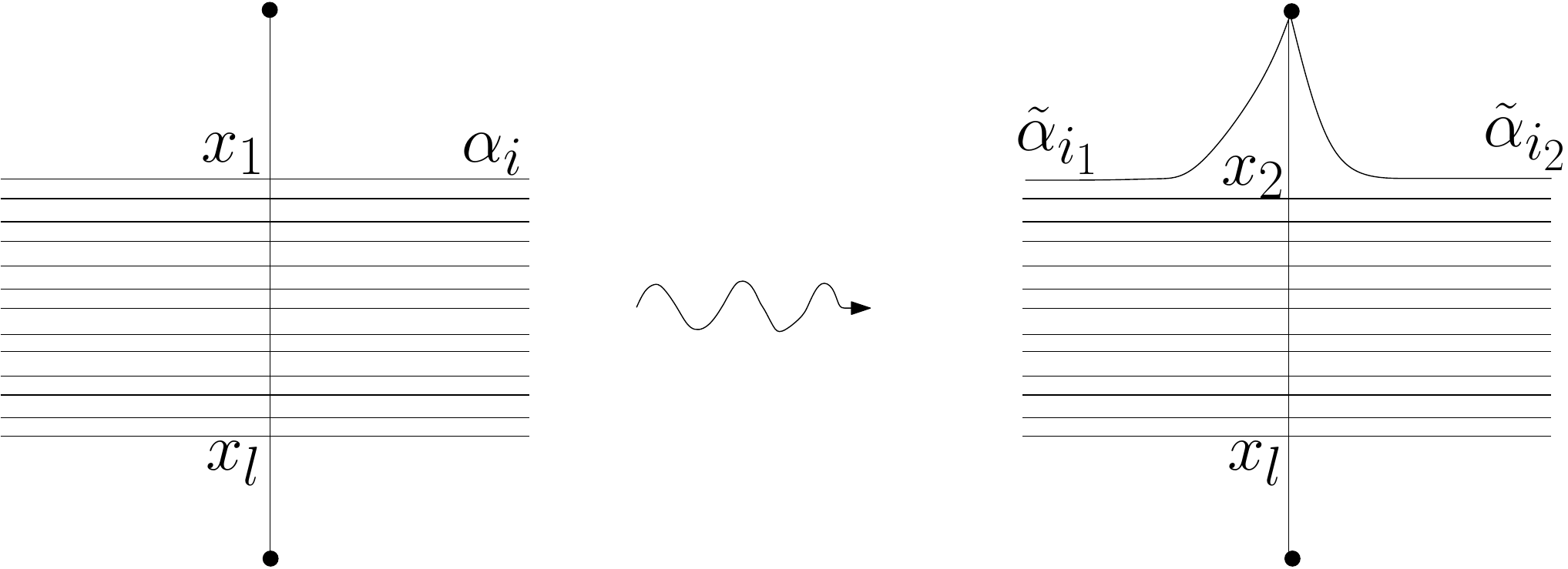}
  \caption{The arcs $\tilde{\alpha}_{i_1}$ and $\tilde{\alpha}_{i_2}$ obtained by sliding $\alpha_i$ along $\beta$}
  \label{arc_flow}
\end{figure}

Define $\sigma_2$ to be the simplex given by $\tilde{\sigma_1}\cup\tilde{\alpha}_{i_1}\cup\tilde{\alpha}_{i_2}$ and $\tilde{\sigma}_2$ to be the simplex given by replacing $\alpha_i$ with $\tilde{\alpha}_{i_1}\cup\tilde{\alpha}_{i_2}$ in $\tilde{\sigma}_1$.  If one of these new arcs is boundary parallel we just discard it, but notice that since there is at most one marked point in each boundary component then at least one of these two arcs is not boundary parallel.  Since $\beta$ doesn't intersect with any outgoing closed boundary component, we can see that this construction preserves conditions $i$-$iii$ of definition \ref{arc_admissible} i.e., $\sigma_2$ and $\tilde{\sigma}_2$ are admissible arc sets.  Furthermore, $\sigma_2$ contains $\tilde{\sigma}_1$ and $\tilde{\sigma}_2$ as faces and this last simplex intersects $\beta$ only at $x_2,\ldots,x_l$.  In this way we can define a sequence of simplices in $\Bcat(\Sg,\Delta)$
\begin{equation*}
\begin{tikzpicture}[scale=0.5]
\node (t1) at (0,0){$\tilde{\sigma_1}$};
\node (2) at (1,2) {$\sigma_2$};
\node (t2) at (2,0) {$\tilde{\sigma_2}$};
\node (3) at (3,2){$\sigma_3$};
\node (t3) at (4,0) {$\tilde{\sigma_3}$};
\node (tl) at (8,0) {$\tilde{\sigma_l}$};
\node (l) at (9,2) {$\sigma_l$};
\node (tl1) at (10,0) {$\tilde{\sigma}_{l+1}$};
\node (dot) at (6,1) {$\cdots\cdots$};

\path[auto,arrow,->]     (t1) edge [right hook->](2)
    		             (t2) edge [left hook->](2)		             
    		             (t2) edge [right hook->](3)
    		             (t3) edge [left hook->](3)
    		             (tl) edge [right hook->](l)
    		             (tl1) edge [left hook->](l);
\end{tikzpicture}
\end{equation*} 
where $\sigma_j$ contains $\tilde{\sigma}_{j-1}$ and $\tilde{\sigma}_j$ as faces and this last simplex intersects $\beta$ only at $x_j,\ldots,x_l$.  Finally, by construction $\tilde{\sigma}_{l+1}$ is in the closure of the star of $\beta$.  Thus, we can define a flow $\Bcat(\Sg,\Delta)\times I\to\Bcat(\Sg,\Delta)$ by use of barycentric coordinates which flows linearly along this finite sequence of simplices and when restricted to a face corresponds to the flow of the face.  Moreover, we can also show this flow is well defined on arc systems.  This finishes the proof in the special case.  

Now, to consider the case where there is a boundary component with more than one marked point. It is enough to consider what happens when we add a marked point $p\in\partial S-\Upsilon$.  Let $\Upsilon'=\Upsilon\cup\{p\}$ and $\Delta'=\Delta\cup I_p$ where $I_p$ is an interval around $p$ such that $I_p \cap \Delta =\emptyset$. This additional marked point $p$ can not be added to an outgoing closed boundary.  By using a similar argument as for the case with at most one marked point in the boundary component we can show that if $\Bcat(\Sg,\Delta)$ is $n$ connected then $\Bcat(\Sg,\Delta')$ is $n+1$ connected.  Wahl describes this argument in detail in \cite{wahlmcg} and we can see that her argument restricts to $\Bcat(\Sg,\Delta)$ in a similar way as for the special case.
\end{proof}

\subsection{Gluing admissible fat graphs}
\label{gluing_ad_section}
We want to think of open-closed cobordisms as morphisms between one dimensional manifolds, where composition is given by gluing along the boundary using the parametrizations. Furthermore, we want to model composition of cobordisms by a map described combinatorially in terms of fat graphs.  More precisely, consider $S_1$ and $S_2$ composable open-closed cobordisms i.e., $\partial_{out}S_1 \cong \partial_{in}S_2$ via an orientation reserving diffeomorphism.  Then we can glue $S_1$ to $S_2$ along $\partial_{out}S_1$ and $\partial_{in}S_2$ using the boundary parametrizations to obtain an oriented surface $S_2 \circ S_1$ together with a map 
\[S_1\sqcup S_2 \longrightarrow S_2 \circ S_1\]
which is injective everywhere except on $\partial_{out}S_1$ and $\partial_{in}S_2$. This induces a map 
\begin{equation}
\label{gluing_Mod}
\begin{array}{ccc}
\Mod(S_2)\times\Mod(S_1)          &\longrightarrow              &  \Mod(S_2\circ S_1)\\
(\varphi_2,\varphi_1)                 &\mapsto                         &  \varphi_2\circ\varphi_1  
\end{array}
\end{equation}
where $\varphi_2\circ\varphi_1$ is the diffeomorphism which restricts to $\varphi_i$ on the image of $S_i\cof S_2 \circ S_1$ for $i=1,2$.  The diffeomorphism  $\varphi_2\circ\varphi_1$ is well defined since $\varphi_1$ and $\varphi_2$ fix $\partial_{out}S_1$ and $\partial_{in}S_2$ and their collars point-wise.  Taking classifying spaces we get a continuous map 
\begin{equation}\
\label{gluing_BMod}
\text{B} \Mod(S_2) \times \text{B} \Mod(S_1) \longrightarrow \text{B} \Mod(S_2 \circ S_1)
\end{equation}
In this section we construct a map on admissible fat graphs which models this map up to homotopy.  This will be a topological map, constructed on the realization of the categories of admissible fat graphs.  First, we will introduce the notion of a metric fat graph to give a different interpretation of the elements of the realization of these categories. 

\begin{dfn}
A \emph{metric admissible fat graph} is a pair $(\Gamma,\lambda)$ where $\Gamma$ is an admissible fat graph and $\lambda$ is a \emph{length function}, i.e., a function $\lambda: E_{\Gamma}\to [0,1]$ where $E_{\Gamma}$ is the set of edges of $\Gamma$ and $\lambda$ satisfies:
\begin{enumerate}[(i)]
\item $\lambda(e)=1$ if $e$ is a leaf,
\item $\lambda^{-1}(0)$ is a forest in $\Gamma$ and $\Gamma/\lambda^{-1}(0)$ is admissible
\item for any admissible cycle $C$ in $\Gamma$ we have $\sum_{e\in C}\lambda(e) = 1$.
\end{enumerate}
We will call the value of $\lambda$ on $e$ the \emph{length} of the edge $e$ in $\Gamma$.
\end{dfn}

\begin{dfn}
Two metric admissible fat graphs $(\Gamma,\lambda)$ and $(\tilde{\Gamma}, \tilde{\lambda})$ are called \emph{isomorphic} if there is an isomorphism of admissible fat graphs $\varphi:\Gamma\to\tilde{\Gamma}$ such that $\lambda=\tilde{\lambda}\circ\varphi_*$, where $\varphi_*$ is the map induced by $\varphi$ on $E_{\Gamma}$.  We denote by $[\Gamma, \lambda]$ an isomorphism class of metric admissible fat graphs.
\end{dfn}

In other words, (i) we identify isomorphic admissible fat graphs with the same metric and (ii) we identify a metric admissible fat graph with some edges of length $0$ with the metric fat graph in which these edges are collapsed and all other edge lengths remain unchanged.

\begin{rmk}
\label{metric_nerve}
The elements of $\vert\Fatad\vert$ can be interpreted as (isomorphism classes of) metric admissible fat graph as follows.  Each point in the realization is given by $x=([\Gamma_0]\to[\Gamma_1]\to\ldots\to[\Gamma_k],s_0,s_1,\ldots s_k)\in N_k\Fatad\times\Delta^k$, where $N_k$ denotes the set of $k$-simplices of the nerve.  Choose representatives $\Gamma_i$ for $0\leq i\leq k$ and for each $i$, let $C^j_i$ denote the $j$th admissible cycle of $\Gamma_i$ and $n^j_i$ denote the number of edges in $C^j_i$. Each graph $\Gamma_i$ naturally defines a metric admissible fat graph $(\Gamma_0,\lambda_i)$ where $\lambda_i$ is given as follows:
\[
\begin{array}{rcl}
\lambda_i:E_{\Gamma_0} & \longrightarrow & \hspace{7pt} [0,1]\\
\hspace{7pt}e&\longmapsto & 
\left\lbrace\begin{array}{cl}
0 & \text{if } e \text{ is collapsed in }\Gamma_i\\
{1}/{n^i_j} & \text{if } e\in C^i_j \\
{1}& \text{otherwise}
\end{array}\right.
\end{array}
\]
Then $x$ determines an isomorphism class of metric fat graphs $[\Gamma_0,\sum_{i=0}^k s_i\lambda_i]$.  It is easy to see that this assignment respects the simplicial identities and is injective.  Thus we can represent elements of $|\Fatad|$ uniquely by their corresponding metric fat graphs.
\end{rmk}

\begin{cons}
\label{gluing_ad}
Given $S_1$ and $S_2$ composable cobordisms, we construct a map 
\begin{equation}
\label{gluing_ad_map}
\begin{array}{ccc}
|\Fatad_{S_2}| \times |\Fatad_{S_1}| &\longrightarrow &|\Fatad_{S_2 \circ S_1}|\\
([\Gamma_2,\lambda_2],[\Gamma_1,\lambda_1]) &\mapsto &[\Gamma_2 \circ \Gamma_1, \lambda_2 \circ \lambda_1]
\end{array}
\end{equation}
Choose representatives $(\Gamma_1,\lambda_1)$ and $(\Gamma_2,\lambda_2)$ such that there are no edges of length zero. We first fix some notation.
Let 
$l_1^1,l_1^2,\ldots ,l_1^{q_1}$
be the outgoing closed leaves of $\Gamma_1$ and 
$l_1^{q_1+1},l_1^{q_1+2},\ldots$,
$l_1^{q_1+q_2}$
be the outgoing open leaves of $\Gamma_1$.
Similarly, let 
$l_2^1,l_2^2,\ldots ,l_2^{q_1}$
be the incoming closed leaves of $\Gamma_2$ and 
$l_2^{q_1+1},l_2^{q_1+2},\ldots ,l_2^{q_1+q_2}$
be the incoming open leaves of $\Gamma_2$.
Moreover, let
$\Gamma_1^i\subset \Gamma_1$
be the sub-graph corresponding to $l_1^i$, the $i$th outgoing closed leave of $\Gamma_1$, 
and similarly let
$\Gamma_2^i\subset \Gamma_2$
be the sub-graph corresponding to $l_2^i$, the $i$th incoming closed leave of $\Gamma_2$.
Finally, define $B_i$ to be the total length of the boundary cycle corresponding to $l_2^i$ i.e., $B_i=\sum_{e\in \Gamma_2^i} m_e\cdot\lambda_2(e)$, where $m_e$ is the number of times $e$ appears in the boundary cycle of  $l_2^i$.  Note that $m_e\in \{1,2\}$.

Since $\Gamma_1$ is an admissible fat graph, all the sub-graphs $\Gamma_1^i$'s are disjoint and thus we can re-scale $(\Gamma_1,\lambda_1)$ to $(\Gamma_1,\tilde{\lambda}_1)$ where: 
\[
 \tilde{\lambda}_1(e) =
  \begin{cases}
   B_i \cdot \lambda_1(e) & \text{if } e\in \Gamma_1^i \\
   \lambda_1(e)          & \text{else} 
  \end{cases}
\]
In other words, we independently re-scale the sub-graphs corresponding to the outgoing closed leaves such that the total lengths of each incoming boundary cycle of $\Gamma_2$ equals the total lengths of its corresponding outgoing cycle in $\Gamma_1$.  Now we can define a metric fat graph $(\Gamma_2 \underset{c}{\circ} \Gamma_1, \lambda_2 \underset{c}{\circ} \lambda_1)$ obtained by gluing $\Gamma_1$ and $\Gamma_2$ along their closed boundary components.  More precisely, this is the metric fat graph obtained by:
\begin{itemize}
\item[$(i)$] Collapsing the leaves $l_1^i$ and $l_2^i$ to vertices $v_1^i$ and $v_2^i$.
\item[$(ii)$] gluing each $\Gamma_2^i \subset \Gamma_2$ to $\Gamma_1$ along the boundary cycle corresponding to $l_1^i$  such that $v_1^i$ and $v_2^i$ coincide in a single vertex denoted $v_i$. If $v_i$ is a bivalent vertex we delete it.
\item[$(iii)$] The metric $\lambda_2 \underset{c}{\circ} \lambda_1$ is the one induced by $\tilde{\lambda}_1$ and $\lambda_2$. 
\end{itemize} 
See Figure \ref{admissible_gluing_pic} for an example.
Finally, we define $\Gamma_2 \circ \Gamma_1$ to be the admissible fat graph obtained by gluing each open leave $l_1^{q_1+j}$ to the open leave $l_2^{q_1+j}$ for $1\leq j \leq q_2$ to obtain an edge $e_j$ and we endow this graph with the following metric
\[
 \lambda_2 \circ \lambda_1(e) =
  \begin{cases}
   1                                                    & \text{if } e=e_j \text{ for some } 1\leq j \leq q_2 \\
   \lambda_2 \underset{c}{\circ} \lambda_1(e)  & \text{else} 
  \end{cases}
\]
See Figure \ref{admissible_gluing_pic} an example.
\end{cons}

\begin{figure}[h]
  \centering
  \includegraphics[scale=0.7]{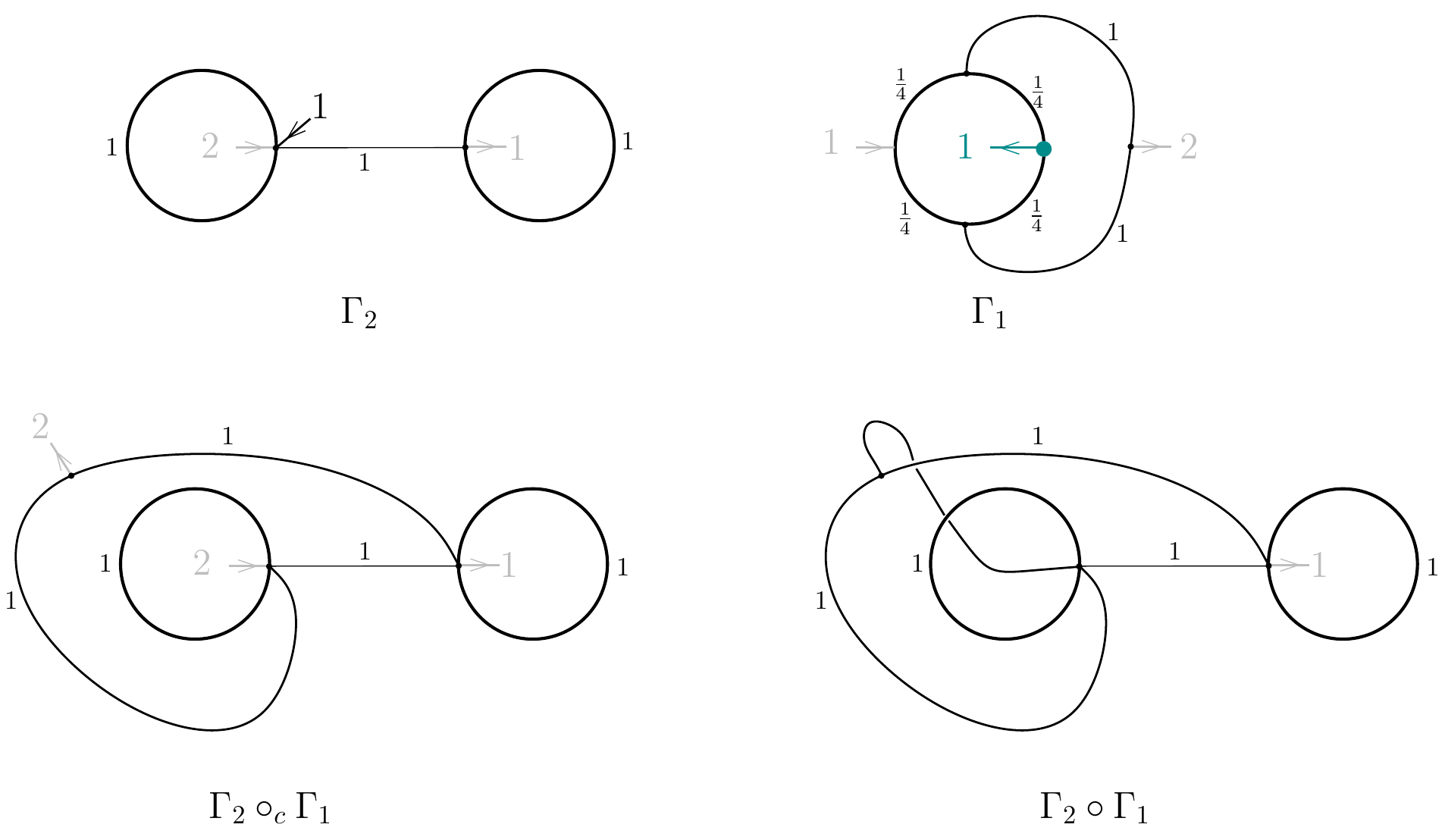}
  \caption{An example of gluing metric admissible fat graphs.  Incoming and outgoing leaves are marked with arrows.  Closed leaves are in black and open leaves in grey.  The lengths of the edges are written next to each edge.}
  \label{admissible_gluing_pic}
\end{figure}

\begin{thm}
\label{admissible_composition}
If $S_2\circ S_1$ is an oriented cobordism in which each connected component has a boundary component which is neither free nor outgoing closed, then Construction \ref{gluing_ad} models the map on classifying spaces of mapping class groups, map (\ref{gluing_BMod}), under the equivalence of Theorem \ref{ad_oc}.  
\end{thm}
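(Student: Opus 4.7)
The plan is to lift Construction \ref{gluing_ad} to the level of marked admissible fat graphs and then argue that this lift is equivariant with respect to the group-theoretic composition \eqref{gluing_Mod}, which by the principal bundle structure established in the proof of Theorem \ref{ad_oc} forces the quotient map to realize \eqref{gluing_BMod} up to homotopy.

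First, I would construct a continuous map
\[
|\Ead_{S_2}|\times |\Ead_{S_1}|\longrightarrow |\Ead_{S_2\circ S_1}|
\]
using the metric fat graph description from Remark \ref{metric_nerve}. Given representatives $([\Gamma_1,\lambda_1],[H_1])$ and $([\Gamma_2,\lambda_2],[H_2])$, form $[\Gamma_2\circ\Gamma_1,\lambda_2\circ\lambda_1]$ as in Construction \ref{gluing_ad} and endow it with the following marking. The admissibility of $\Gamma_1$ ensures that the outgoing closed boundary cycles of $\Gamma_1$ are disjoint embedded circles in $|\Gamma_1|$; after a canonical isotopy of $H_1$ we may assume that $H_1$ sends each such circle to a parallel curve of the corresponding outgoing boundary component of $S_1$, and similarly for the incoming closed boundary cycles of $\Gamma_2$ in $S_2$. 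Since $\partial_{out}S_1$ is identified with $\partial_{in}S_2$ via the boundary parametrizations used in Construction \ref{gluing_ad}, the embeddings $H_1$ and $H_2$ glue to a marking $H_2\circ H_1\colon |\Gamma_2\circ\Gamma_1|\hookrightarrow S_2\circ S_1$. Along the open leaves, the glued edges $e_j$ embed as short intervals joining the two markings near the corresponding open boundary, and elementary arguments show that the isotopy class of $H_2\circ H_1$ depends only on $[H_1]$ and $[H_2]$ and not on the chosen representatives of $[\Gamma_i,\lambda_i]$ (in particular, it is compatible with the collapse of length-zero sub-forests, so the map descends from the nerve to the realization).

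Next, I would verify $\Mod(S_2)\times\Mod(S_1)$-equivariance. Given $(\varphi_1,\varphi_2)\in \Mod(S_1)\times\Mod(S_2)$, the composite diffeomorphism $\varphi_2\circ\varphi_1$ of \eqref{gluing_Mod} restricts to $\varphi_i$ on the image of $S_i$ in $S_2\circ S_1$. Precomposing the glued marking $H_2\circ H_1$ with the action gives $(\varphi_2\circ H_2)\circ(\varphi_1\circ H_1)$ essentially by construction, because the gluing only affects a neighbourhood of the boundaries that $\varphi_1,\varphi_2$ fix pointwise together with their collars. Hence the lifted map is equivariant, and passes to a continuous map
\[
|\Fatad_{S_2}|\times|\Fatad_{S_1}|\longrightarrow |\Fatad_{S_2\circ S_1}|
\]
which, on isomorphism classes of underlying metric fat graphs, coincides with Construction \ref{gluing_ad}.

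Finally, under the homotopy equivalences of Theorem \ref{ad_oc}, the square with horizontal maps the lifted composition and quotient composition and vertical maps the projections from $|\Ead|$ to $|\Fatad|\simeq B\Mod$ commutes on the nose. Because $|\Ead_{S_i}|$ and $|\Ead_{S_2\circ S_1}|$ are contractible finite CW-complexes on which the respective mapping class groups act freely, the quotient map of classifying spaces induced by a group homomorphism is characterized up to homotopy by its equivariance against an equivariant lift to contractible covers. Thus the equivariance established above identifies our gluing map with $B(\Mod(S_2)\times\Mod(S_1)\to\Mod(S_2\circ S_1))$ up to homotopy, proving the theorem. The main obstacle I expect is the verification that the gluing of markings is well defined on isotopy classes and continuous in the metric parameters; in particular, the rescaling step in Construction \ref{gluing_ad} must be matched with an ambient isotopy of $H_1$ that suitably straightens the outgoing closed boundary cycles, and care is needed where edges of length zero are collapsed so that the map does descend from $N_k\Fatad\times\Delta^k$ to $|\Fatad|$.
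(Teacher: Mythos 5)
Your overall framework matches the paper's: lift the gluing to the universal-bundle level (your $|\Ead|$ is isomorphic to the paper's $|\Bcat_0|$ by Propositions \ref{A0_E}, \ref{B0_E}), verify $\Mod(S_2)\times\Mod(S_1)$-equivariance, and conclude via the principal-bundle structure from Theorem \ref{ad_oc}. The equivariance argument you give is essentially the one in the paper, just phrased in markings rather than arc systems.

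There is, however, a genuine gap: you never establish that the map of Construction \ref{gluing_ad} (or your proposed lift of it to $|\Ead|$) is well-defined and continuous. You correctly single this out as "the main obstacle," but then only assert that "elementary arguments show" the glued marking is well-defined on isotopy classes, and that "care is needed" at length-zero edges, without resolving either. This is precisely where the paper does its real work. Rather than gluing markings directly, the paper first notes that the metric gluing \eqref{gluing_ad_map} is dual to a gluing of weighted admissible arc systems, sets up Constructions \ref{admissible_arcs} and \ref{open_gluing} to realize that gluing on $|\Bcat_0|$, and then invokes the results of \cite{kaufmann_livernet_penner}, which establish continuity and well-definedness of the band/arc gluing by a lengthy argument the paper explicitly describes as "quite involved." In particular, the rescaling step (dividing by $A_i$ and multiplying by $B_i$, or the admissible normalization $A_i=1$) and the behavior as edges on an admissible cycle collapse are not compatible with the simplicial structure of $N_\bullet\Fatad$ in any obvious way; this is exactly the point where a direct marking-based argument would have to reprove, from scratch, something the paper imports. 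Without filling this in, your proposal shows that \emph{if} the gluing map exists as a continuous map commuting with the quotient, then it models \eqref{gluing_BMod}; it does not show that the map exists.
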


The proof of this theorem will follow several steps.  The main idea is as follows.  Recall that the quotient map $\Ead_S \fib \Fatad_S$ is a model for the universal $\Mod(S)$-bundle.  Moreover, we have a bundle isomorphism 
\begin{equation*}
\begin{tikzpicture}[scale=1]
\node (a) at (0,0){$|\Fatad_S|$};
\node (b) at (3,0) {$|\underline{\Bcat}_0(S)|$};
\node (c) at (0,2) {$|\Ead_S|$};
\node (d) at (3,2){$|\Bcat_0(S)|$};
\path[auto,arrow,->]     (a) edge node[swap]{$\cong$}(b)
    		             (c) edge node {$\cong$}(d);
\path[auto,arrow,->>]    (c) edge (a)
    		             (d) edge (b);
\end{tikzpicture}
\end{equation*} 
where $\Bcat_0(S)$ is the poset category of filling admissible arc systems, $\underline{\Bcat_0}(S)$ is the quotient category under the action of the mapping class group and the isomorphisms are given by taking duals.  To proof the theorem, we will expand the construction in \cite{kaufmann_livernet_penner} to give a map 
\[|\Bcat_0(S_2)|\times |\Bcat_0(S_1)|\to|\Bcat_0(S_2\circ S_1)|\]
which descends to a well defined map 
\[|\underline{\Bcat_0}(S_2)|\times |\underline{\Bcat_0}(S_1)|\to 
|\underline{\Bcat_0}(S_2\circ S_1)|\]
which is dual to the one defined in Construction \ref{gluing_ad}.  Finally, we study what this map does on each fiber of the universal bundle to show that it models the map on classifying spaces given in (\ref{gluing_BMod}). 

\begin{rmk}
 It is important to remark that the map defined on metric graphs (\ref{gluing_ad_map}) is heavily dependent on the metric and it is thus only a topological map i.e., it is defined on the realization and can not be defined on the level of categories.  Moreover, this map is not strictly associative but only homotopy associative.
\end{rmk} 

As for the case of fat graphs, we will introduce the notion of weighted arc systems to give a specific interpretation of the elements of the realization of these categories of arcs. 

\begin{dfn}
A \emph{weighted arc system in $S$} is a pair $([\alpha],\omega)$ where $[\alpha]$ is a simplex of the arc complex $\Acat(S)$ and $\omega$ is a \emph{weight function}, i.e., a function $\omega: [\alpha]\to (0,1]$, $\alpha_i \mapsto \omega_i$.
Similarly, a \emph{weighted admissible arc system in $S$} is a weighted arc system $([\alpha],\omega)$ where $[\alpha]\in\Bcat_0(S)$ and $\omega$ is a weight function, such that for any outgoing closed boundary $\partial_jS$
\[\sum \omega(\alpha_i)=1\]
where the sum is taken over all $\alpha_i$ with an endpoint in $\partial_j S$.
\end{dfn}

\begin{rmk}
Just as in Remark \ref{metric_nerve} the elements of the realization of the poset category $\vert\Pcat(\Acat(S))\vert$ can be uniquely determined by weighted arc systems.  And each point in the the quotient under the action of the mapping class group, $\vert\underline{\Pcat(\Acat}(S))\vert$, can be uniquely determined as an equivalence class of weighted arc systems, where $\Mod(S)$ acts trivially on the weights.  
Similarly, each point of $\vert\Bcat_0(S)\vert$ can be uniquely determined by a weighted admissible arc system.  And each point in $\vert\underline{\Bcat}_0(S)\vert$ can be uniquely determined as an equivalence class of such under the action of the mapping class group. 
\end{rmk}

\begin{dfn}
An arc system $[\alpha]\in\Acat(S)$ is said to be \emph{exhaustive} if for each closed boundary component of $S$, say $\partial_jS$, there is at least one arc $\alpha_i\in[\alpha]$ with one of its endpoints in $\partial_jS$.
Let $\Arc(S)$ be the subspace of $\vert\Pcat(\Acat(S))\vert$ of weighted exhaustive arc systems and $\underline{\Arc}(S)$ be its quotient under the action of the mapping class group.
\end{dfn}

Let $\partial_i S_1$ denote the $i$-th outgoing closed boundary component of $S_1$ and $\partial_i S_2$ denote the $i$-th incoming closed boundary component of $S_2$.  Let $S_2\underset{i}{\circ}S_1$ denote the surface obtained by gluing $\partial_i S_1$ to $\partial_i S_2$ using the parametrizations.  In \cite{kaufmann_livernet_penner}, Kaufmann, Livernet and Penner construct a map
\begin{equation}
\label{KLP}
\circ_i:\Arc(S_2) \times \Arc(S_1)\to \Arc(S_2\underset{i}{\circ} S_1)
\end{equation} 
which descends to a well defined map 
\begin{equation}
\label{KLP_quot}
\circ_i:\underline{\Arc}(S_2) \times \underline{\Arc}(S_1)\to \underline{\Arc}(S_2\underset{i}{\circ} S_1)
\end{equation}
The proof that these maps are well defined and continuous is quite involved.  However, the main idea of the construction is simple and beautiful.  Here we informally describe this construction.  Consider $([\alpha],\omega)\in \Arc(S_1)$, we can interpret the weight $\omega_j$ of $\alpha_j$ as the height of a rectangular band $R^1_j$ whose core is identified with $\alpha_j$.  By perturbing the endpoints of the arcs $\alpha_j$ we can represent $([\alpha],\omega)$ by a family of bands in $S$ which intersect only at their boundaries and which form a solid band on a collar of the boundaries, see Figure \ref{arc_window}. 
The arcs incident on each outgoing closed boundary can be totally ordered using the orientation of the surface together with the marked point.  Thus the bands forming the solid band at the boundary appear in a specific order. Similarly we can geometrically interpret  $([\beta],\sigma)\in \Arc(S_2)$ as a family of rectangular bands $R^2_j$ with core $\beta_j$.
\begin{figure}[h]
  \centering
  \includegraphics[scale=0.4]{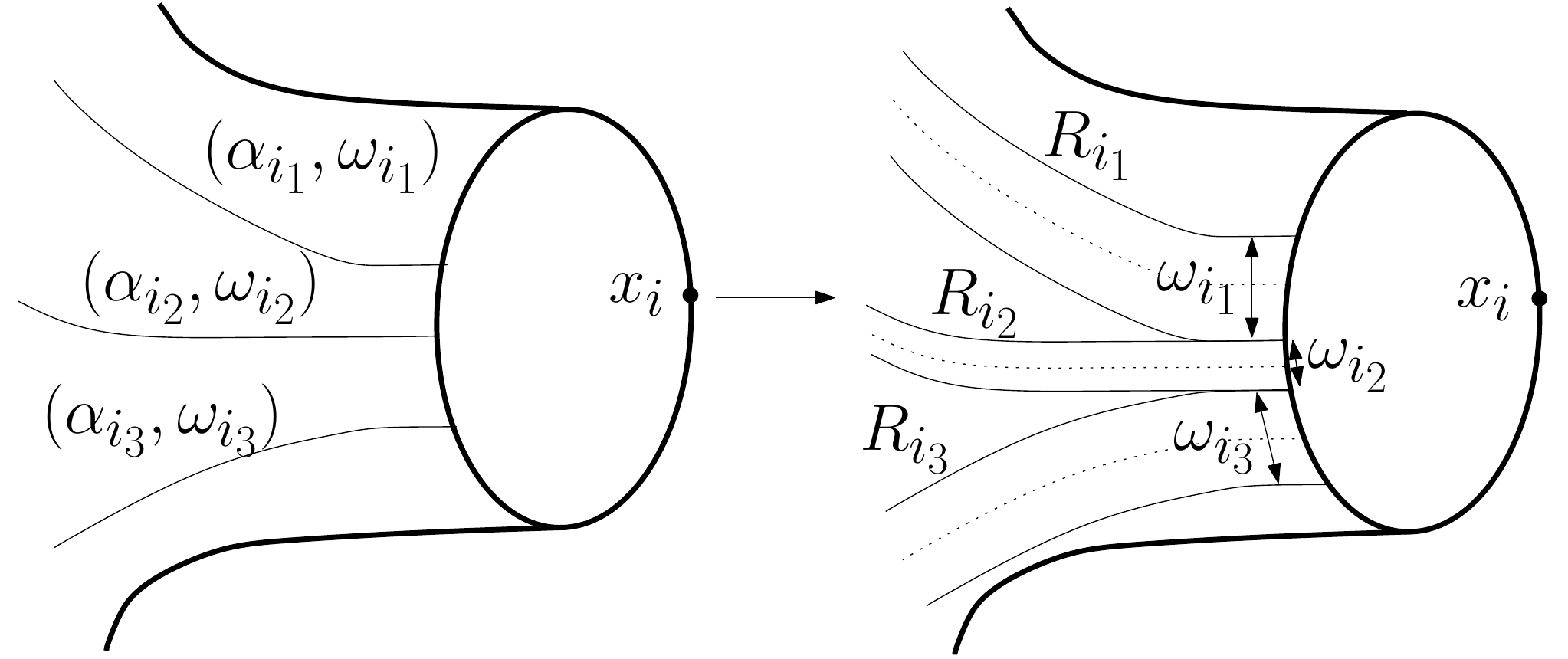}
  \caption{On the left a weighted arc system at a boundary $\partial_i S$ on the right its interpretation in terms of solid bands on the surface.}
  \label{arc_window}
\end{figure}

Let $A_i$ be the total height of the bands attached at $\partial_i S_1$ and $B_i$ be the total height of the bands attached at $\partial_i S_2$.  More precisely:
\[A_i:=\sum_{
\begin{array}{c}
j\\
\alpha_j\cap\partial_iS_1\neq\emptyset
\end{array}} m_j\cdot\omega_j
\]
\[B_i:=\sum_{
\begin{array}{c}
j\\
\beta_j\cap\partial_iS_2\neq\emptyset
\end{array}} n_j\cdot\sigma_j
\]
where $m_j$ is the number of end points the arc $\alpha_j$ has on $\partial_i S_1$ and $n_j$ is the number of end points the arc $\beta_j$ has on $\partial_i S_2$.  In other words, $A_i$ and $B_i$ are the sum of the weights of the arcs which intersect $\partial_i S_1$ and $\partial_i S_2$ respectively counted with multiplicities.

Assume first that 
\begin{equation}
\label{equal_heights}
A_i = B_i
\end{equation}
i.e., the solid bands at $\partial_i S_1$ and $\partial_i S_2$ have the same total height.  Thus, the bands $\{R^1_j\}$ at $\partial_i S_1$ can be attached to the bands $\{R^2_j\}$ at $\partial_i S_2$  to obtain a system of bands in $S_2\underset{i}{\circ}S_1$, which in turn determines a weighted arc system in $S_2\underset{i}{\circ}S_1$, see Figure \ref{arc_gluing}.  Note that the horizontal edges of  $\{R^1_j\}$ decompose the bands $\{R^2_j\}$ into sub-rectangles and vice-versa.  This decomposition depends on the heights of the bands.  Therefore, this construction depends on the weights of the arc systems, although in a very explicit manner.  This is why we only have a topological map. Finally, this construction could create simple closed curves or boundary parallel arcs.  If it does, we just discard them.

If the total height of the bands at the gluing boundaries do not match i.e.,
Equation (\ref{equal_heights}) does not hold, we first re-scale $([\alpha],\omega)$ to $([\alpha],\omega')$ such that the total heights agree.  More precisely, we set:
\[
\begin{array}{rcl}
\omega'_j & = &  
\left\lbrace
	\begin{array}{cl}
	 \frac{B_i}{A_i}\cdot\omega_j & \text{if } \alpha_j\cap\partial_i S_1\neq\emptyset\\
	\omega_j & \text{otherwise }
\end{array}\right.
\end{array}
\] 
This is possible since $[\alpha]$ and $[\beta]$ are is exhaustive and thus $A_i\neq 0 \neq B_i$. Then, the new heights agree and we combine these weighted arc systems as before.  This defines the maps (\ref{KLP}) and (\ref{KLP_quot}) constructed in \cite{kaufmann_livernet_penner}.
\begin{figure}[h]
  \centering
  \includegraphics[scale=0.6]{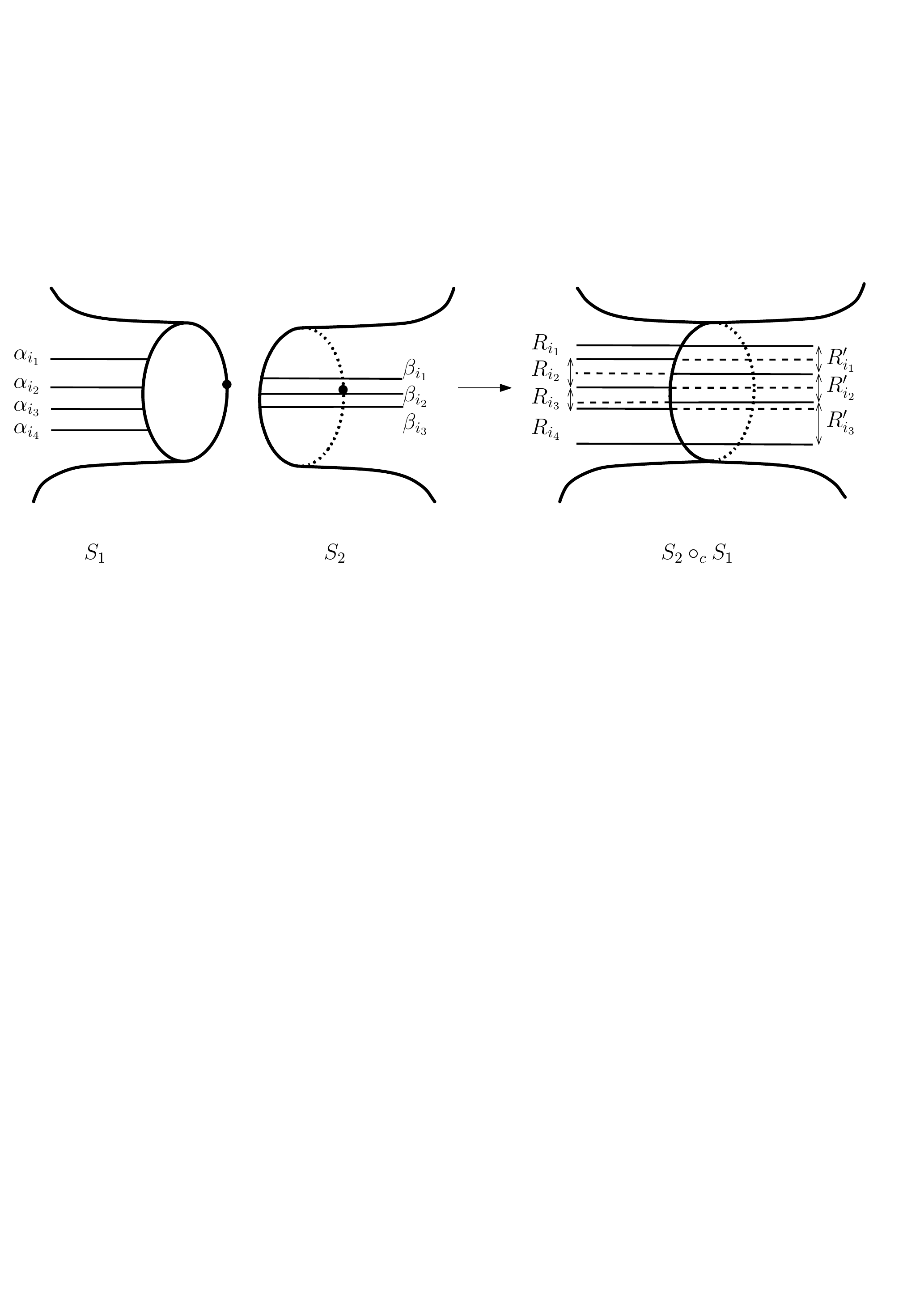}
  \caption{Local picture of gluing weighted arc systems along a closed boundary.  The dotted lines represent how the bands in $S_1$ subdivide the bands in $S_2$ and vice-versa.}
  \label{arc_gluing}
\end{figure}

We now show that if we restrict to the admissible case we can do this construction along all closed boundary components at once.
\begin{cons}
\label{admissible_arcs}
Note first that if $S$ is not a disk, every filling arc system is exhaustive and thus for any cobordism $S$ which is not a disk it holds that
\[|\Bcat_0(S)|\subset \Arc(S)\]
Let $S_2\underset{c}{\circ} S_1$ be the surface obtained by gluing the closed outgoing boundary components of $S_1$ to the closed incoming boundary components of $S_2$.  The construction of Kaufmann, Livernet and Penner extends to a map
\begin{equation*}
\begin{array}{ccl}
\underset{c}{\circ}:|\Bcat_0(S_2)| \times |\Bcat_0(S_1)| & \to  &|\Arc(S_2 \underset{c}{\circ} S_1)|\\
(([\beta],\sigma),([\alpha],\omega)) & \mapsto & ([\gamma],\theta)
:=([\beta \underset{c}{\circ}\alpha],\sigma \underset{c}{\circ} \omega)
\end{array}
\end{equation*}
where $([\gamma],\theta)$ is the weighted arc system obtained by gluing all the outgoing closed boundaries of $S_1$ to the incoming closed boundary components of $S_2$ simultaneously as in \cite{kaufmann_livernet_penner}. More precisely, for $i=1, \ldots,q_1$, let $A_i$  be the total heights of the bands at $\partial_iS_1$ and $B_i$ be the total heights of the bands at $\partial_iS_2$.  If $A_i=B_i$ for all $i=1,\ldots ,q_1$, then we can just combine the weighted arc systems as described above.  In case this does not hold, then again we can first do a re-scaling procedure on $([\alpha],\omega)$ and then glue.  To see that this is well defined, notice that since $([\alpha],\omega)$ is admissible, then the following conditions hold:
\begin{itemize}
\item[$(i)$] $A_i=1$ for all $i=1,\ldots,q_1$.
\item[$(ii)$] If $\partial_iS_1\cap\alpha_j\neq\emptyset$, then for all $\partial_k S_1$ outgoing closed such that $k\neq i$ it holds that $\partial_kS_1\cap\alpha_j=\emptyset$. 
\end{itemize}
Thus, we can re-scale $([\alpha],\omega)$ to $([\alpha],\omega')$ where $\omega'$ is defined as follows:
\[
\begin{array}{rcl}
\omega'_j & = &  
\left\lbrace
	\begin{array}{cl}
	 B_i\cdot\omega_j & \text{if } \alpha_j\cap\partial_i S_1\neq\emptyset\\
	\omega_j & \text{otherwise }
\end{array}\right.
\end{array}
\]
Then $A_i'=B_i$ for all $i=1,\ldots,q_1$ and we can glue as before.

It only remains to say what happens in the degenerate case when $S_2$ is a disk with one incoming closed boundary component,
in which case the empty arc system is the only object of $\Bcat_0(S_2)$.  In this case, we just forget the arcs incident at the outgoing closed boundary of $S_1$.  If $S_2$ has a disjoint union of disks with one incoming closed boundary, then similarly we forget the arcs incident at their corresponding outgoing closed boundaries in $S_1$.

\end{cons}

\begin{lem}
\label{gluing_closed}
The map given in Construction \ref{admissible_arcs} restricts to a map
\begin{equation*}
\underset{c}{\circ}:|\Bcat_0(S_2)| \times |\Bcat_0(S_1)| \to  |\Bcat_0(S_2 \underset{c}{\circ} S_1)|
\end{equation*}
which induces a well defined map on the quotients
\[\underset{c}{\circ}:|\underline{\Bcat}_0(S_2)| \times |\underline{\Bcat}_0(S_1)| \to  |\underline{\Bcat}_0(S_2 \underset{c}{\circ} S_1)|\]
\end{lem}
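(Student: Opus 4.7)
The plan is to establish two things: first, that the weighted arc system $([\gamma],\theta)=([\beta\underset{c}{\circ}\alpha],\sigma\underset{c}{\circ}\omega)$ produced by Construction \ref{admissible_arcs} is in fact admissible and filling, so it lies in $|\Bcat_0(S_2\underset{c}{\circ}S_1)|$; and second, that the construction is equivariant with respect to the gluing homomorphism $\Mod(S_2)\times\Mod(S_1)\to\Mod(S_2\underset{c}{\circ}S_1)$ given in (\ref{gluing_Mod}), so that it descends to a well-defined map on the quotients under the mapping class group actions.

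For the first point, I would check the filling property, the three admissibility clauses of Definition \ref{arc_admissible}, and the weight normalization at each outgoing closed boundary, in turn. Filling follows because the polygons of $S_1\setminus\alpha$ and $S_2\setminus\beta$ glue along matched subintervals of the common gluing boundary (precisely the matching produced by the rescaling and band refinement) to give polygonal complementary regions of $\gamma$ in $S_2\underset{c}{\circ}S_1$. The key observation for the admissibility clauses is that the outgoing closed boundary components of $S_2\underset{c}{\circ}S_1$ are exactly those of $S_2$, and the construction only alters bands in a neighborhood of the gluing boundaries, so a collar of each $\partial_{i_j}S_2$ remains undisturbed. Clause (ii) is immediate from the same condition on $\beta$; for clause (i), a cutting family is obtained from the arcs of $\beta$ realizing clause (i) in $S_2$, with their extensions into $S_1$: the resulting family still separates $S_2\underset{c}{\circ}S_1$ into $k$ components, with each piece of $S_1$ appended to the unique component of $S_2$ containing the corresponding incoming closed boundary; clause (iii) follows in the same fashion since only a neighborhood of $\partial_{i_j}S_2$ matters. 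Finally, the rescaling only modifies weights of arcs in $\alpha$ incident to outgoing closed boundaries of $S_1$, so the weights of $\beta$ are unchanged; an arc $\beta_k$ incident to $\partial_{i_j}S_2$ may be subdivided at its other end into several subbands, but each subband inherits the weight $\sigma_k$, so its contribution at $\partial_{i_j}S_2$ is still a single band of weight $\sigma_k$ and $\sum_k\sigma_k=1$ is preserved.

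For the equivariance, I will use that representatives of classes in $\Mod(S_i)$ fix the boundary parametrizations pointwise and therefore preserve the widths, positions and cyclic orderings of the bands on the gluing boundaries. Hence the band-matching and the rescaling are carried out identically on equivalent representatives, and the glued mapping class acts on $\gamma$ by the corresponding restriction to each side. Continuity of the resulting map is inherited from the continuity of the Kaufmann--Livernet--Penner map on $\Arc(S_2)\times\Arc(S_1)$, which restricts to the admissible subspaces. I expect the main technical obstacle to be the bookkeeping in the degenerate cases noted at the end of Construction \ref{admissible_arcs}: when $S_2$ contains disk components with a single incoming closed boundary, the arcs of $\alpha$ meeting the matched outgoing closed boundary of $S_1$ must simply be discarded, and one has to verify that the surviving arc system remains admissible and filling in the correct sense, with the weight normalization still holding on the remaining outgoing closed boundaries of $S_2\underset{c}{\circ}S_1$.
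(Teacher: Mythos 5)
Your proposal hits the same three checkpoints as the paper's own proof---admissibility clauses $(i)$--$(iii)$, the filling property, the weight normalisation---and defers continuity and descent to the Kaufmann--Livernet--Penner results, so the overall route is the same. Two places are worth tightening. First, your weight argument is garbled: when an arc $\beta_k$ incident to $\partial_{i_j}S_2$ is subdivided (because its other end meets several $\alpha$-bands across the gluing boundary), the resulting sub-bands are stacked with heights \emph{summing} to $\sigma_k$ --- each does not ``inherit the weight $\sigma_k$.'' The correct point, which the paper also makes, is simply that the construction never rescales $\beta$, so the total height at each $\partial_{i_j}S_2$ remains $1$ even though the bands there may be refined. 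Second, your filling argument (``polygons glue along matched subintervals'') silently relies on property $(iii)$ of $\alpha$: it is precisely this clause that guarantees the regions of $S_1\setminus\alpha$ abutting an outgoing closed boundary are $k_1$ \emph{distinct} disks, each meeting the gluing circle along a single interval, which is what prevents the glued-up complementary region from becoming an annulus; the paper makes this dependency explicit, and so should you. You also leave unstated (as does the paper, somewhat tersely) that no arc of $\gamma$ can cross the gluing circle twice --- this uses properties $(i)$ and $(ii)$ of $\alpha$ to rule out $\alpha$-segments with both endpoints on outgoing closed boundaries of $S_1$, and it is what justifies ``clause $(ii)$ is immediate'' and rules out the creation of simple closed curves. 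With those points spelled out, your argument matches the paper's.
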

\begin{proof}
That this map is continuous and induces a well defined map on the quotients follows from the results in \cite{kaufmann_livernet_penner}.  Thus it is enough to show that
\[([\gamma],\theta):=([\beta \underset{c}{\circ}\alpha],\sigma \underset{c}{\circ} \omega)
\]
is a weighted admissible arc system. Recall that a filling arc system is admissible if it has properties $i$, $ii$ and $iii$ given in Definition \ref{arc_admissible}.

To see $([\gamma],\theta)$ has these properties, consider first the case where $S_1$ has only one outgoing closed boundary component and $S_2$ has only one incoming closed boundary component.  
Since $[\alpha]$ and $[\beta]$ have property $ii$ i.e., arcs that start at an outgoing closed boundary end elsewhere, then it is clear that $[\gamma]$ has property $ii$ as well.  In particular, notice that the gluing procedure restricted to admissible arc systems does not create any simple closed curves. Now, since $[\beta]$ has property $i$, i.e., it has arcs that cut the surface into sub-surfaces which separate the outgoing closed boundaries of $S_2$, then the arcs with endpoints at the incoming closed boundary of $S_2$ have another endpoint in at most one of the outgoing closed boundaries of $S_2$.  Therefore, $[\gamma]$ also has property $i$.  Finally, since $[\beta]$ has property $iii$, then $[\gamma]$ has property $iii$.  Now, if $S_1$ has more than one outgoing closed boundary component, since $[\alpha]$ has property $i$, then one can repeat the argument above independently for each outgoing close boundary component of $S_1$ to see that $[\gamma]$ has properties $i$, $ii$ and $iii$.

To see that $[\gamma]$ is filling, we study first the simplest case.  Assume that $S_1$ has only one outgoing boundary component.  Assume furthermore, that the weighted arc systems $([\alpha],\omega)$ and $([\beta],\sigma)$ have the same number of arcs attached at $\partial_1 S_1$ and $\partial_1 S_2$ (counted with multiplicities) and that their corresponding weights match. 
More precisely, let $\{\alpha_{1_1}, \alpha_{2_1}, \ldots \alpha_{k_1}\}$ be the arcs with endpoints in $\partial_1 S_1$ listed with multiplicities and ordered by the orientation of the surface and the marked point in $\partial_1 S_1$.  Since $[\alpha]$ is admissible, then $\alpha_{i_1}\neq \alpha_{j_1}$ if $i_1\neq j_1$.  Similarly, let $\{\beta_{1_1}, \beta_{2_1},\ldots \beta_{k_1}\}$ be the arcs with endpoints in $\partial_1 S_2$ listed with multiplicities and ordered by the orientation of the surface and the marked point in $\partial_1 S_2$.  Note that is possible that $\beta_{i_1}=\beta_{j_1}$ for some pairs $({i_1},{j_1})$.  Let $\omega_{i_1}$ be the weight of $\alpha_{i_1}$ and $\sigma_{i_1}$ be the weight of $\beta_{i_1}$.  We are assuming that $\omega_{i_1} = \sigma_{i_1}$ for all $i_1$.  We can choose representatives $\alpha$ and $\beta$ such that the end points of $\alpha_{i_1}$ and $\beta_{i_1}$ match under the parametrizations of $\partial_1 S_1$ and $\partial_1 S_2$.  Then $[\gamma]$ is the arc system in $S_2 \underset{c}{\circ} S_1$ obtained by gluing $\alpha_{i_1}$ with $\beta_{i_1}$ for all $i_1$.  See Figure \ref{easy_gluing}.  Therefore, 
\[S_2 \underset{c}{\circ} S_1-\gamma=(S_2-\beta)\bigcup_{\partial_1 S_1 \sim \partial_1 S2} (S_1-\alpha).\] 
Now, let 
\[S_1-\alpha=\sqcup T^1_i\bigsqcup \sqcup P^1_i\]
\[S_2-\beta=\sqcup T^2_i\bigsqcup \sqcup P^2_i\]
where the $T^j_i$'s and $P^j_i$'s are polygons and the $T^j_i$'s are the polygons which intersect non-trivially with $\partial_1 S_j$.  Since $\alpha$ is admissible, it has property $iii$, which implies that
\[\bigsqcup T^1_i=T^1_{1_1} \sqcup T^1_{1_1} \ldots \sqcup T^1_{k_1}\]
where $T^1_{i_1}$ is a disk with $\alpha_{i_1}$ and $\alpha_{(i+1)_1}$ on its boundary.  Finally, $S_2 \underset{c}{\circ} S_1-\gamma$ is obtained by gluing each $T^1_{i_1}$ to a $T^2_j$ along an interval in the boundary.  And these intervals are all distinct and intersect only at their endpoints. Therefore, 
\[S_2 \underset{c}{\circ} S_1-\gamma = \bigsqcup T_i\]
where the $T_i$'s are polygons i.e., $\gamma$ is filling.  

On the other hand, if the $([\alpha],\omega)$ and $([\beta],\sigma)$ do not have this structure, then the gluing procedure gives a refinement of the situation described above.  More precisely  
\[S_2 \underset{c}{\circ} S_1-\gamma=(S_2-\tilde{\beta})\bigcup_{\partial_1 S_1 \sim \partial_1 S2} (S_1-\tilde{\alpha}).\] 
where 
\[\tilde{\beta}=\beta\cup\{\tilde{\beta}_1,\ldots ,\tilde{\beta}_k\}\]
\[\tilde{\alpha}=\alpha\cup \{\tilde{\alpha}_1,\ldots, \tilde{\alpha}_l\}\]
and $\tilde{\beta}_i$ is an arc parallel to a $\beta_j\in\beta$ for some $j$
and similarly $\tilde{\alpha}_i$ is an arc parallel to a $\alpha_j\in\alpha$ for some $j$.  Then it is clear that $S_2 \underset{c}{\circ} S_1-\gamma$ is a refinement of the case above into sub-polygons and thus $\gamma$ is filling.  Furthermore, if $S_1$ has more than one outgoing closed boundary component, since $[\alpha]$ has property $i$, then one can repeat the argument above independently for each outgoing close boundary component of $S_1$ to see that $[\gamma]$ is filling.

\begin{figure}[h]
  \centering
  \includegraphics[scale=0.6]{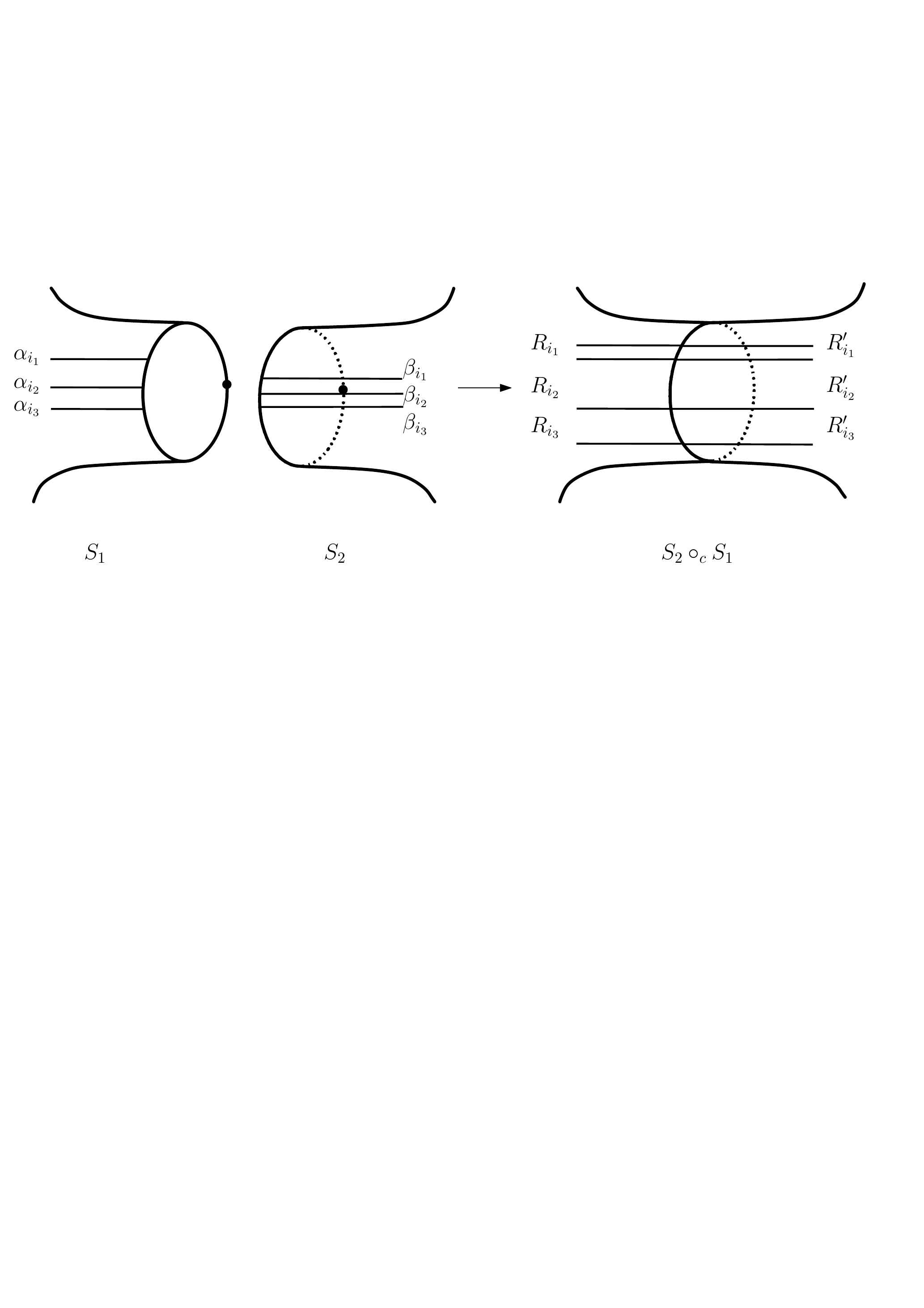}
  \caption{Gluing along closed boundary components in the case where the number of arcs and their weights match.}
  \label{easy_gluing}
\end{figure}

In the degenerate case, when $S_2$ is a disk with one incoming boundary component, let $\alpha_1, \ldots \alpha_k$ be the arcs with an endpoint at the outgoing closed boundary of $S_1$.  If we delete these arcs and then cut the surface $S_1$ along the remaining arcs, by the above argument we see that: 
\[S_1-\bigcup_{i> k} \alpha_i= A \bigsqcup \sqcup P^1_i\]
where $A$ is an annulus containing the outgoing closed boundary component of $S_1$ and $P^1_i$ are polygons.  Thus,
\[S_2\circ_{c} S_1-\bigcup_{i> k} \alpha_i= T \bigsqcup \sqcup P^1_i\]
where $T$ is the disk obtained by gluing $S_2$ to $A$.  The same argument holds if $S_2$ contains a disjoint union of disks each with one incoming closed boundary component.

Regarding the weights, since the gluing construction did not scale $([\beta],\omega)$, then it is clear that the total weight of $([\gamma],\theta)$ at each outgoing closed boundary is $1$.  Finally, the scaling of $([\alpha],\omega)$ could create arcs of weight greater than $1$.  However, since the gluing procedure subdivides the bands of $[\alpha]$ with the band of $[\beta]$ and vice-versa, there are no arcs in $([\gamma],\theta)$ with weight greater than $1$.  So $([\gamma],\theta)$ is a weighted admissible arc system.  
\end{proof}

\begin{cons}
\label{open_gluing}
We construct a map
\begin{equation*}
\underset{op}{\circ}:|\Bcat_0(S_2 \underset{c}{\circ} S_1)|\to |\Bcat_0(S_2\circ S_1)|
\end{equation*}
For $i=1,\ldots,q_2$, let $\iota^i_1:I\to S_1$ be the parametrization of the $i$th outgoing open boundary component of $S_1$ and $\iota^i_2:I\to S_2$ be the parametrization of the $i$th incoming open boundary component of $S_2$.  Let $\partial^i S_1 = \text{Im}(\iota^i_1)$ and $\partial^i S_2 = \text{Im}(\iota^i_2)$.  We can think of the $\partial^i S_j$'s as sub-spaces of $S_2 \underset{c}{\circ} S_1$.  Then the cobordism $S_2\circ S_1$ is obtained from $S_2 \underset{c}{\circ} S_1$ by gluing $\partial^i S_1$ to $\partial^i S_2$ using $\iota^i_1$ and $\iota^i_2$.  Thus, we can also think of the $\partial^i S_j$'s as arcs on $S_2\circ S_1$ and the gluing procedure gives that 
\[\partial^i S_1=\partial^i S_2\subset S_2\circ S_1\]

Now, let $([\gamma],\theta)\in |\Bcat_0(S_2 \underset{c}{\circ} S_1)|$, we define
\[\underset{op}{\circ}([\gamma],\theta):=([\gamma\cup\{\delta^1,\delta^2,\ldots,\delta^{q_2}\}],\tilde{\theta})\]
where $\delta^i=\partial^i S_1=\partial^i S_2$ and $\tilde{\theta}$ is given as follows:
\[
\tilde{\theta}:  
\left\lbrace
	\begin{array}{ccc}
	 \gamma_i & \mapsto & \theta(\gamma_i)\\
	 \delta^i & \mapsto & 1
\end{array}\right.
\]
See Figure \ref{open_gluing_example}.  This construction could create boundary parallel arcs.  If so, we just discard them.

\begin{figure}[h]
  \centering
  \includegraphics[scale=0.6]{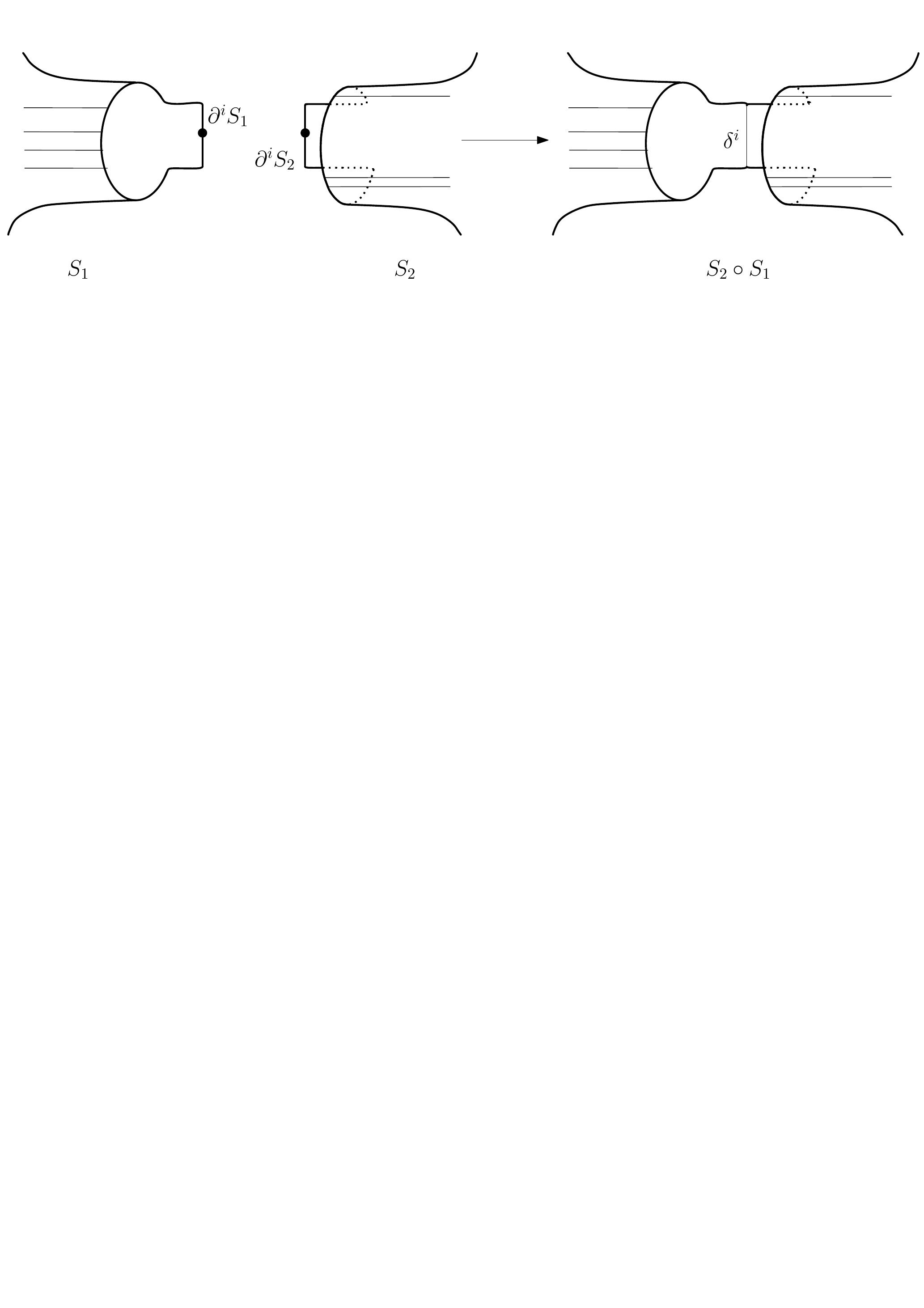}
  \caption{Local picture of gluing weighted arc systems along an open boundary.}
  \label{open_gluing_example}
\end{figure}
\end{cons}

\begin{rmk}
If $S_1$ has no outgoing closed boundary components, then 
\[S_2\underset{c}{\circ}S_1=S_2 \sqcup S_1\]
and the construction above still makes sense. On the other hand, if $S_1$ has no outgoing open boundary components, then 
\[S_2\underset{c}{\circ}S_1=S_2 \circ S_1\]
and the map $\underset{op}{\circ}$ is the identity.
\end{rmk}

\begin{lem}
The map 
\begin{equation*}
\underset{op}{\circ}:|\Bcat_0(S_2 \underset{c}{\circ} S_1)|\to |\Bcat_0(S_2\circ S_1)|\end{equation*}
described in Construction \ref{open_gluing} is well defined, continuous and induces a well defined map on quotients
\begin{equation}
\underset{op}{\circ}|\underline{\Bcat}_0(S_2 \underset{c}{\circ} S_1)|\to |\underline{\Bcat}_0(S_2\circ S_1)|.
\end{equation}
Moreover, the composition 
\begin{equation}
\label{gluing_B0quot}
\circ:|\underline{\Bcat}_0(S_2)| \times |\underline{\Bcat}_0(S_1)|
\stackrel{\underset{c}{\circ}}{\longrightarrow}
|\underline{\Bcat}_0(S_2 \underset{c}{\circ} S_1)|
\stackrel{\underset{op}{\circ}}{\longrightarrow}
|\underline{\Bcat}_0(S_2 \circ S_1)|
\end{equation}
is dual to the map (\ref{gluing_ad_map}) described in Construction \ref{gluing_ad}.
\end{lem}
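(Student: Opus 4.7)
The plan is to verify the three assertions in turn, leaving duality with Construction \ref{gluing_ad} for last. For well-definedness of $\underset{op}{\circ}$, I would first check that $[\gamma\cup\{\delta^1,\ldots,\delta^{q_2}\}]$ remains admissible and filling in $S_2\circ S_1$. The admissibility conditions $(i)$--$(iii)$ of Definition \ref{arc_admissible} only concern outgoing closed boundary components, while each $\delta^i$ lies entirely along an open boundary component of $S_2\circ S_1$; thus these conditions are inherited unchanged from $[\gamma]$. For filling, $\gamma$ already decomposes $S_2\underset{c}{\circ}S_1$ into polygons, and the identification $\partial^iS_1\sim\partial^iS_2$ fuses two such polygons along the new interior arc $\delta^i$; adding $\delta^i$ separates them again, so the polygonal decomposition is preserved. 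Continuity is essentially free: the map is the identity on the simplex coordinates determined by $[\gamma]$ with the arcs $\delta^i$ and weights $1$ appended as fixed data, so it is simplicial with respect to the triangulations at play.

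For the descent to quotients, any representative diffeomorphism of an element of $\Mod(S_2\underset{c}{\circ}S_1)$ fixes $\partial_{in}S_j\cup\partial_{out}S_j$ and its collars pointwise, and hence extends uniquely (by the identity across the open gluing locus) to a diffeomorphism of $S_2\circ S_1$. The arcs $\delta^i$ sit inside these fixed collar neighbourhoods, so the extension carries $\gamma\cup\{\delta^i\}$ to the image of $\gamma$ together with the same $\delta^i$'s. This shows $\underset{op}{\circ}$ descends to a continuous map on mapping-class-group quotients.

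For duality with Construction \ref{gluing_ad}, I would use the isomorphism $\Bcat_0(S)^{op}\cong\Eadg$ and its descent $|\underline{\Bcat}_0(S)|\cong|\Fatadg|$ to translate both sides into the metric admissible fat graph picture, under the standing correspondence between weights on arcs and lengths on dual edges (an arc $\alpha_j$ of weight $\omega_j$ corresponds to an edge $e_j$ of length $\omega_j$, with the normalisation conventions on admissible boundary cycles matching on both sides). Under this dictionary, the band rescaling of arcs at the $i$th outgoing closed boundary by the factor $B_i$ matches the rescaling of edges in the sub-graph $\Gamma_1^i$ by $B_i$ appearing in step (iii) of Construction \ref{gluing_ad}, and the simultaneous band-gluing at the closed boundaries realises the identification of the vertices $v_1^i$ and $v_2^i$ into a single vertex $v_i$, together with the removal of bivalent vertices. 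The second step, adding an arc $\delta^i$ of weight $1$ at each matched pair $(l_1^{q_1+j},l_2^{q_1+j})$ of open leaves, is dual to gluing these leaves into an edge $e_j$ of length $1$ in $\Gamma_2\circ\Gamma_1$. Composing the two identifications yields the claim.

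The main obstacle will be tracking how the Kaufmann--Livernet--Penner band-gluing refines the polygonal decomposition when the weights at the two sides of a closed boundary do not match: in that case the bands of $[\alpha]$ subdivide those of $[\beta]$ and vice-versa, producing extra arcs parallel to the original ones, and hence extra edges (of the form required by the metric on $\Gamma_2\underset{c}{\circ}\Gamma_1$) in the dual graph. I would handle this by first verifying the duality pointwise on the dense subset of configurations in which the weights already match exactly (so no subdivision occurs and the duality is transparent), and then observing that both compositions are continuous maps of realizations which agree on this dense subset, so agree everywhere.
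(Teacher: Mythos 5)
Your treatment of well-definedness (admissibility conditions $(i)$--$(iii)$ untouched because $\delta^i$ lies on an open interface; filling preserved because $\delta^i$ re-separates the two fused polygons) and of the descent to mapping-class-group quotients (the arcs $\delta^i$ live in the pointwise-fixed collars) is essentially the same argument the paper gives, if somewhat more explicit. You should, however, note the degenerate case flagged in the paper: when $S_1$ (or $S_2$) is a disk on the relevant side, the arc $\delta^i$ can be boundary parallel and must be discarded; your polygon-fusion picture does not by itself cover this.

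The duality argument has a genuine gap. Your plan is to verify duality on the ``dense subset'' of configurations where the weighted bands on the two sides of each closed interface match exactly, and then extend by continuity. But that subset is not dense in $|\underline{\Bcat}_0(S_2)| \times |\underline{\Bcat}_0(S_1)|$. Fix arc systems $[\alpha]$ in $S_1$ and $[\beta]$ in $S_2$; if the number of arc-ends at $\partial_i S_1$ does not equal the number at $\partial_i S_2$ (counted with multiplicity), no choice of weights makes the band partitions match, and when the counts do match the matching condition cuts out a positive-codimension linear slice of the weight simplex. You also cannot perturb to matching configurations by adding parallel refining arcs, since arcs in a system must be pairwise non-isotopic. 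The paper's proof takes the opposite strategy: it verifies the correspondence \emph{directly} for arbitrary weights, by checking that the rescaling by $B_i$ on the arc side is literally the rescaling by $B_i$ on the metric-graph side, and that the band gluing (with its mutual subdivision of bands) is dual to gluing the admissible cycle of $\Gamma_1$ to the corresponding boundary cycle of $\Gamma_2$ (where the vertices of one side subdivide the edges of the other, producing the extra edges you correctly identify as ``the main obstacle''). This bookkeeping is unavoidable; the density shortcut does not close around it.
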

\begin{proof}
If $([\gamma],\theta)\in |\Bcat_0(S_2 \underset{c}{\circ} S_1)|$, then $[\gamma]$ is admissible and in particular filling. It is clear that $\underset{op}{\circ}([\gamma],\theta)$ is filling, since we are adding arcs along all the gluing intervals. Note that the construction only adds boundary parallel arcs when gluing a disk. The admissibility properties hold trivially.  Moreover, since the mapping class group fixes the incoming and outgoing boundaries point-wise in $S_2\underset{c}{\circ}S_1$, this map induces a well defined map on the quotients.  It is easy to check that $\underset{op}{\circ}$ is continuous.

This construction is dual to the one described in Construction \ref{gluing_ad}.  To see this, consider 
$([\underline{\alpha_1}],\omega_1) \in |\underline{\Bcat}_0(S_1)|$ and $([\underline{\alpha_2}],\omega_2) \in |\underline{\Bcat}_0(S_2)|$ and let $[\Gamma_1,\lambda_1]$ respectively $[\Gamma_2,\lambda_2]$ be their dual metric admissible fat graphs.  Then $[\Gamma_2 \underset{c}{\circ} \Gamma_1]$ is obtained by scaling the closed outgoing boundary components of $\Gamma_1$ and then gluing them to the closed incoming boundary components on $\Gamma_2$.  The dual of this is scaling the arcs of $\underline{\alpha_1}$ with endpoints in the outgoing closed boundary components of $S_1$ and gluing them to the arcs with endpoints in the incoming closed boundaries of $S_2$.  Therefore 
$([\underline{\alpha_2} \underset{c}{\circ} \underline{\alpha_1} ],
\omega_2 \underset{c}{\circ} \omega_1)
\text{ is dual to }
[\Gamma_2 \underset{c}{\circ} \Gamma_1, \lambda_2 \underset{c}{\circ} \lambda_1].$
Finally, $\underset{op}{\circ}$ glues surfaces along intervals and adds an arc on each glued interval.  This is exactly dual to connecting the leaves corresponding to the open boundary components to create a new edge that crosses the glued interval.
\end{proof}

With all this at hand we can prove the main theorem.
\begin{proof}[Proof of Theorem \ref{admissible_composition}]
The map on metric fat graphs (\ref{gluing_ad_map}) is dual to the map on weighted arc systems (\ref{gluing_B0quot}), therefore it is well defined and continuous.  

To show that this construction models the map on classifying spaces of mapping class groups (\ref{gluing_BMod}) it 
is enough to study what it does on the universal bundles and show that on the fibers, it is given by the homomorphism on mapping class groups (\ref{gluing_Mod}).
Let 
\[p_i:\Bcat_0(S_i)\fib\underline{\Bcat}_0(S_i)\] 
be the $\Mod(S_i)$-universal bundles for $i=1,2$ and let 
\[p:\Bcat_0(S_2\circ S_1)\fib\underline{\Bcat}_0(S_2\circ S_1)\] 
be the $\Mod(S_2\circ S_1)$-universal bundle.  The action of the mapping class group on $\Bcat_0(S)$ does not change the weights.  Therefore, for simplicity we omit the weights from the notation.  For $i=1,2$ choose 
$\underline{\alpha_i}\in \underline{\Bcat}_0(S_i)$
and isomorphisms
\[f_i:\Mod(S_i)
\stackrel{\cong}{\longrightarrow} 
p^{-1}_i(\underline{\alpha_i})
\]
Note that for $\varphi_i\in \Mod(S_i)$ we have that $f_i(\varphi_i)=\varphi_i(f_i(1))$ i.e., it is the arc system obtained by acting with $\varphi_i$ on $f_i(1)$.  Let
\[g:\Mod(S_2 \circ S_1)
\stackrel{\cong}{\longrightarrow} 
p^{-1}(\underline{\alpha_2}\circ \underline{\alpha_1})\]
be the isomorphism given by $g(1)=f_2(1)\circ f_1(1)$.  Then it is clear that the composite
\[\Mod(S_2)\times \Mod(S_1)
\stackrel{(f_2,f_1)}{\longrightarrow}
p^{-1}_2(\underline{\alpha_2}) \times p^{-1}_1(\underline{\alpha_1})
\stackrel{\circ}{\longrightarrow}
p^{-1}(\underline{\alpha_2}\circ \underline{\alpha_1})
\stackrel{g^{-1}}{\longrightarrow} 
\Mod(S_2 \circ S_1)
\]
is the homomorphism (\ref{gluing_Mod}) since 
\[g^{-1}(\circ((f_2,f_1)(\varphi_2,\varphi_1)))=
g^{-1}(\varphi_2(f_2(1))\circ\varphi_1(f_1(1)))=
\varphi_2\circ\varphi_1\]
which finishes the proof.
\end{proof}

\section{The chain complex of black ad white graphs}
\subsection{The definition}
In \cite{costellotcft}, Costello gives a complex which models the mapping class group of open-closed cobordisms.  In \cite{wahlwesterland}, Wahl and Westerland rewrite this complex in terms of fat graphs.  In this section we describe this complex as it is defined in \cite{wahlwesterland}.

\begin{dfn}
\label{bw_generalized_def}
A \emph{generalized black and white graph} $G$ is a tuple 
\[G=(\Gamma, V_b, V_w, In, Out, Open, Closed)\] 
where $\Gamma$ is a fat graph, and $V_b$, $V_w$ are subsets of $V$ the set of vertices of $\Gamma$.  We call $V_b$ the set of \emph{black vertices} and $V_w$ the set of \emph{white vertices}.  The sets, $In$, $Out$, $Open$ and $Closed$ are subsets of $L_{\Gamma}$, the set of leafs of $\Gamma$.  In the tuple $G$ the following must hold
\begin{itemize}
\item[-] $V=V_b\sqcup V_w$ i.e., all vertices are either black or white.
\item[-] All black inner vertices are at least trivalent, white vertices are allowed to have valence $1$ or $2$.
\item[-] The subsets $Open$ and $Closed$ are disjoint.
\item[-] The subsets $In$ and $Out$ are disjoint.
\item[-] $In\sqcup Out = Open\sqcup Closed$, we say the leaves are labeled as in/out and open/closed.
\item[-] $Out\subset Open$, all the outgoing leaves are open and $In\subset Open\sqcup Closed$ all incoming leaves are either open or closed.
\item[-]$V_w \cap (Open\sqcup Closed) = \emptyset$.
\item[-] A closed leaf is the only labeled leaf on its boundary cycle.
\end{itemize} 
Additionally, we have as part of the data:  
\begin{itemize}
\item[-] An ordering of the white vertices or equivalently a labeling of the white vertices by $1,2, \ldots \vert V_w\vert$.
\item[-] A choice of a start half edge on each white vertex  i.e., the half edges incident at a white vertex are totally ordered not only cyclically ordered.
\item[-] An ordering of the incoming leaves
\item[-] An ordering of the outgoing leaves.
\end{itemize}
We allow degenerate graphs which are either the empty graph, or a corolla with 1 or 2 leaves on a black vertex. 
\end{dfn}
\begin{dfn}
\label{bw_def}
A \emph{black and white graph} is a generalized black and white graph in which all the leaves are labeled, except possibly the leaves which are connected to the start of a white vertex, which are allowed to be unlabeled.
Figure \ref{BW_example} shows an example of a black and white graph.
\end{dfn}

\begin{rmk}
Note that a black and white fat graph with no white vertices is just an open-closed fat graph with no outgoing closed leaves.
\end{rmk}

\begin{figure}[h!]
  \centering
    \includegraphics[scale=0.7]{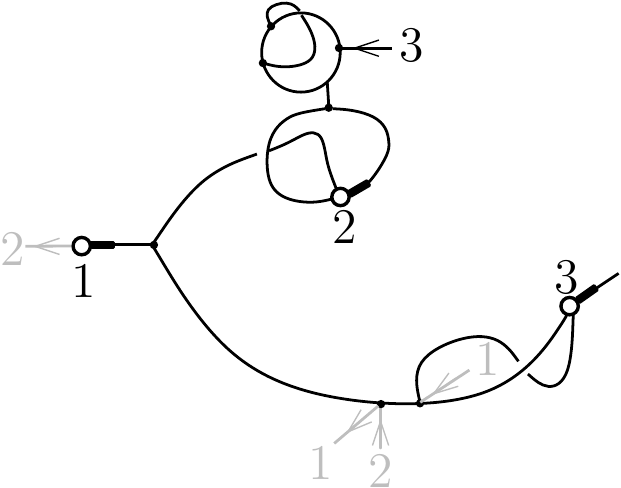}
  \caption{An example of a black and white fat graph.  The incoming and outgoing leaves are marked with arrows. Leaves in black are closed and leaves in grey are open.  The start half edges of the white vertices are thickened.}
  \label{BW_example}
\end{figure}

As for open-closed fat graphs, from a black and white fat graph $G$ we can construct an open-closed cobordism $\Sg$.  First construct a bordered oriented surface $\Sigma_{G}$.  To do this, we thicken the edges of $G$ to strips, the black vertices to disks and glue them together according to the cyclic ordering.  Then, we thicken each white vertex to an annulus and glue to its outer boundary the strips corresponding to the edges attached to it according to the cyclic ordering.  We label the inner boundary of the annuli as outgoing closed and order these components by the ordering of the white vertices.  We label and order the rest of the boundary of  $\Sigma_{G}$ in the same way as for open-closed fat graphs.  This construction gives and open-closed cobordism well defined up to topological type (see Figure \ref{BW_fattening}).

\begin{figure}[h!]
  \centering
    \includegraphics[scale=0.7]{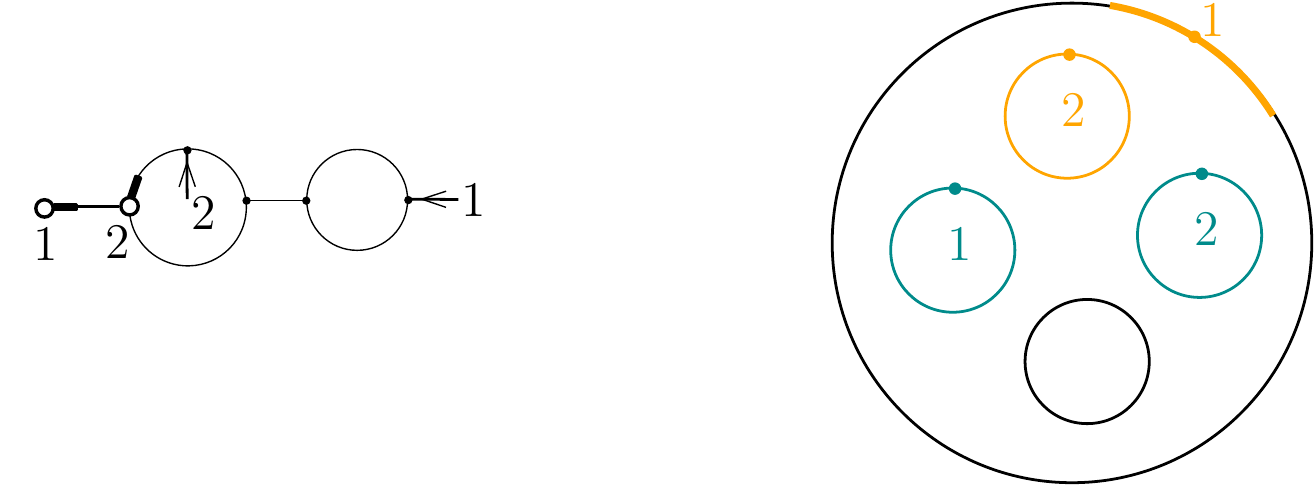}
  \caption{A black and white fat graph and its corresponding open-closed cobordism.  The incoming boundary is shown in yellow and the outgoing boundary in green.}
  \label{BW_fattening}
\end{figure}

\begin{dfn}
An \emph{orientation} of a fat graph $\Gamma$ is a unit vector in $\mathrm{det}(\R(V\sqcup H))$, this is equivalent to an ordering of the set of vertices and an orientation for each edge.  
\end{dfn}

\begin{dfn}
\label{underlying_BW_graph}
A generalized black and white graph $G$ has an \emph{underlying black and white graph} $\lfloor G\rfloor$ defined as $\lfloor G \rfloor = G$ if $G$ is already a black and white graph i.e., $G$ has no unlabeled leaves that are not connected to the start of a white vertex.  On the other hand, let $G$ be a graph with an unlabeled leaf $l$ which is not connected to the start of a white vertex, let $e_l$ denote the edge of $l$ and $v_l$ the other vertex to which $e_l$ is attached.  If $v_l$ is white or if $v_l$ is black and $|v_l|>3$, then $\lfloor G \rfloor$ is the empty graph, where $|v_l|$ denotes the valence of $v_l$. If $v_l$ is black and $|v_l|=3$, then $\lfloor G \rfloor$ is the graph obtained by forgetting $l$, $e_l$ and $v_l$.

If $G$ has an orientation, it induces an orientation on $\lfloor G \rfloor$ which we only need to describe in the case where $\lfloor G \rfloor$ is not $G$ or the empty graph.  In this case, let $l$, $e_l$ and $v_l$ be given as above and let $e_l=\lbrace h_l, \tilde{h}_l\rbrace$ where $s(h_l)=v_l$ and $s(\tilde{h}_l)=l$.  Let $s^{-1}(v_l)=(h_1,h_2,h_l)$ occurring in that cyclic ordering.  Rewrite the orientation of $G$ as $v_l\wedge h_1\wedge h_2\wedge h_l\wedge \tilde{h}_{l}\wedge l\wedge x_1\wedge \ldots \wedge x_k$.  The induced orientation in $\lfloor G \rfloor$ is $x_1\wedge \ldots \wedge x_k$. 
\end{dfn}

\begin{dfn} [Edge Collapse]
Let $G$ be a (generalized) black and white graph, and $e$ be an edge of $G$ which is neither a loop nor does it connect two white vertices.  The \emph{set of edge collapses} $G/e$ is the collection of (generalized) black and white graphs obtained by collapsing $e$ in $G$ and identifying its two end vertices.  If both vertices are black we declare the new vertex to be black. If one of the vertices is white, we declare the new vertex to be white with the same label as the white vertex of $e$.
\begin{itemize}
\item[-] \emph{Fat structure} The collapse of $e$ induces a well defined cyclic structure of the half edges incident at the new vertex.
\item[-] \emph{Start half edge} If $e$ does not contain the start half edge of a white vertex, then there is a unique black and white fat graph obtained by collapsing $e$.  If $e$ contains the start half edge of a white vertex, there is a finite collection of black and white graphs obtained by collapsing $e$.  Each graph in this collection corresponds to a choice of placement of the start half edge among the half edges incident at the collapsed black vertex.  See Figure \ref{edge_collapse} an example of this collection.
\item[-] \emph{Orientation}  An orientation of $G$ induces an orientation of the elements of $G/e$ as follows.  Let $e:=\lbrace h_1,h_2\rbrace$,  $s(h_1)=v_1$, and $s(h_2)=v_2$.  Write the orientation of $G$ as $v_1\wedge v_2\wedge h_1\wedge h_2\wedge x_1\wedge \ldots \wedge x_k$. Then the induced orientation of an element of $G/e$ is given by $v\wedge x_1\wedge \ldots \wedge x_k$.
\end{itemize}
\end{dfn}

\begin{figure}[h!]
  \centering
    \includegraphics[scale=0.9]{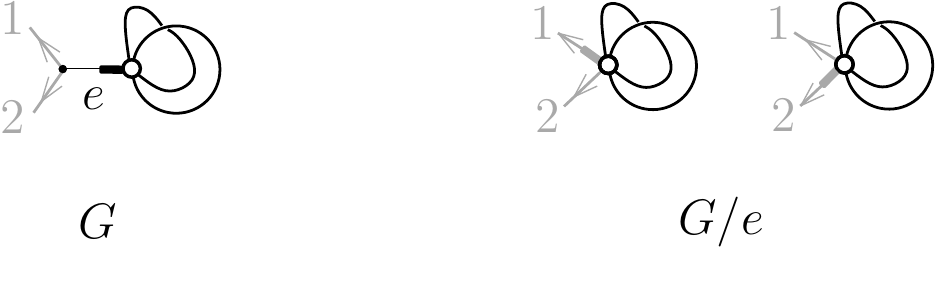}
  \caption{On the left a black and white fat graph $G$ and to the right its edge collapse set $G/e$. }
  \label{edge_collapse}
\end{figure}

\begin{dfn}
Let $G$ and $\tilde{G}$ be generalized black and white graphs.  We say $\tilde{G}$ is a \emph{blow-up} of $G$ if there is an edge $e$ of $\tilde{G}$ such that $G\in\tilde{G}/e$.
\end{dfn}
\begin{rmk}
Note that the blow-up of a black and white graph is not necessarily a black and white graph again, since the blow might contain unlabeled leaves which are not the start of a white vertex. See Figure \ref{blow-up} for an example.
\end{rmk}

\begin{figure}[h!]
  \centering
    \includegraphics[scale=0.9]{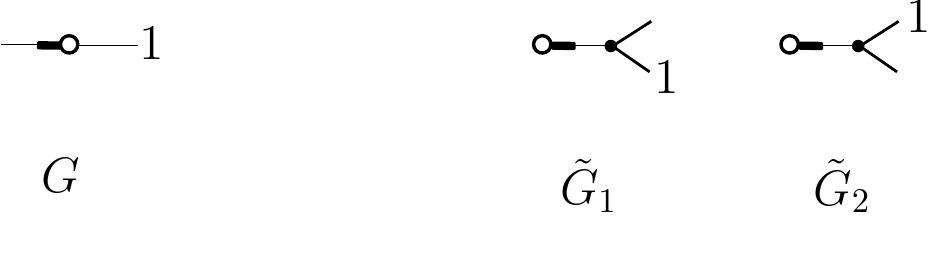}
  \caption{An example of a black and white fat graph $G$ with blow-ups $\tilde{G}_1$ and $\tilde{G}_2$ that have unlabeled leaves that are not connected to the start edge of a white vertex.}
  \label{blow-up}
\end{figure}

\begin{dfn} [The chain complex of Black and White Graphs]
\label{bw_cpx_def}
The chain complex of black and white fat graphs $\bwgraphs$ is the complex generated as a $\Z$ module by isomorphism classes of oriented black and white graphs modulo the relation where $-1$ acts by reversing the orientation.  The degree of a black and white graph $G$ is 
\[\mathrm{deg}(G):=\sum_{v\in V_b}(|v|-3)+\sum_{v\in V_w}(|v|-1)\]
where $|v|$ is the valence of the vertex $v$.  The differential of a black and white graph $G$ is
\[d(G):=\sum_{
\begin{array}{c}
\scriptstyle{(\tilde{G},e)}\\
\scriptstyle{G\in\tilde{G}/e}
\end{array}}
\lfloor\tilde{G}\rfloor\]
where the sum runs over all isomorphism classes of generalized black and white graphs which are blow-ups of $G$.  Figure \ref{differential} gives some examples of the differential.
\end{dfn}

\begin{rmk}
In \cite{wahlwesterland}, it is shown that $d$ is indeed a differential.  Note that, since the number of white vertices, and the number of boundary cycles remain constant under blow-ups and edge collapses, the chain complex $\bwgraphs$ splits into chain complexes each of which is finite and corresponds to a topological type of an open-closed cobordism. 
\end{rmk}

\begin{figure}[h!]
  \centering
    \includegraphics[scale=0.7]{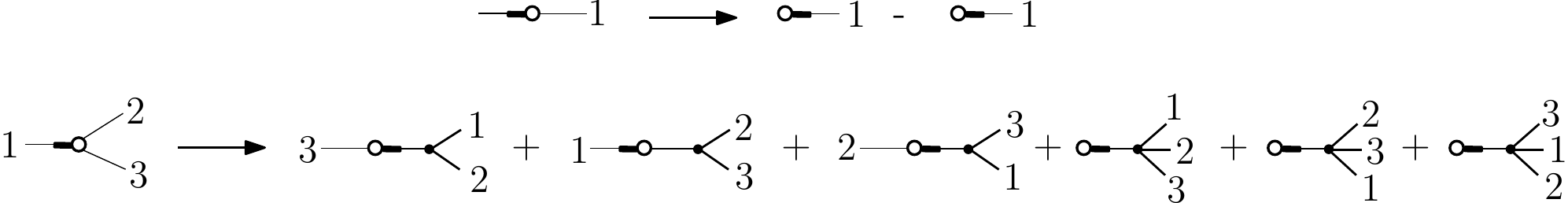}
  \caption{Differential for two black and white fat graphs.  All the labeled leaves are incoming open.}
  \label{differential}
\end{figure}

\subsection{Black and white graphs as models for the mapping class group}
Using a partial compactification of the moduli space of open-closed cobordisms, Costello proves that the chain complex $\bwgraphs$ is a model for the mapping class groups of open-closed cobordisms (cf.\cite{costellorg, costellotcft}). We give a new proof this result by showing that $\bwgraphs$ is a chain complex of $\vert\Fatad\vert$.  In \cite{Godinunstable}, Godin gives a CW structure on $\vert\Fatoc\vert$ which restricts to $\vert\Fatad\vert$ in which each $p$-cell is given by a fat graph $[\Gamma]$ of degree $p$ where 
\[\mathrm{deg}([\Gamma]):=\sum_v(\vert v \vert -3)\]
and the sum ranges over all inner vertices of $[\Gamma]$ and $\vert v \vert$ denotes the valence of $v$.  From this structure, she constructs a chain complex which is the complex generated as a $\Z$ module by isomorphism classes of oriented fat graphs modulo the relation where $-1$ acts by reversing the orientation.  The differential of a fat graph $[\Gamma]$ is
\[d([\Gamma]):=\sum_{
\begin{array}{c}
\scriptstyle{([\tilde{\Gamma}],e)}\\
\scriptstyle{[\Gamma]=[\tilde{\Gamma}/e]}
\end{array}}
[\tilde{\Gamma}]\]

While working with Sullivan diagrams, a quotient of $\bwgraphs$, Wahl and Westerland give a natural association that constructs a black and white graph from an admissible fat graph by collapsing the admissible boundary to a white vertex and using the leaf marking the admissible boundaries to mark the start half edge \cite{wahlwesterland}.  This construction is only well defined in special kind of admissible fat graphs.

\begin{dfn}
Let $\Gamma$ be an admissible fat graph.  We say $\Gamma$ is \emph{essentially trivalent at the boundary}, if every vertex on the admissible cycles of $\Gamma$ is trivalent or it has valence $4$ and is attached to the leaf marking the admissible cycle.
\end{dfn}
\begin{rmk}
\label{iso_bw_Fat3}
There is a bijection between the set of isomorphism classes of black and white graphs and the set of isomorphism classes of admissible fat graphs which are essentially trivalent at the boundary.  To see this, let $G$ be a black and white graph. Construct an admissible fat graph $\Gamma_G$ by expanding each white vertex to an admissible cycle.  The start half edge of the white vertex gives the position of the leaf marking its corresponding admissible cycle.  That is, if the start half edge is an unlabeled leaf, then the leaf of the corresponding admissible cycle in $\Gamma_G$ is a attached to a trivalent vertex.  Otherwise, the leaf corresponding to the admissible cycle is attached to the same vertex to which the start half edge is attached to.  Label all the admissible leaves using the labeling of the white vertices in $G$.  The fat graph $\Gamma_G$ is by construction an admissible fat graph which is essentially trivalent at the boundary.  Figure \ref{BW_Fat_cons} shows an example of this construction.   In the other direction, given an admissible fat graph $\Gamma$ which is essentially trivalent at the boundary, construct a black and white fat graph $G_\Gamma$ by collapsing the admissible boundaries to white vertices and placing the start half edge according to the position of the admissible leaves in $\Gamma$.  These constructions are clearly inverse to each other. 
\begin{figure}[h!]
  \centering
    \includegraphics[scale=0.7]{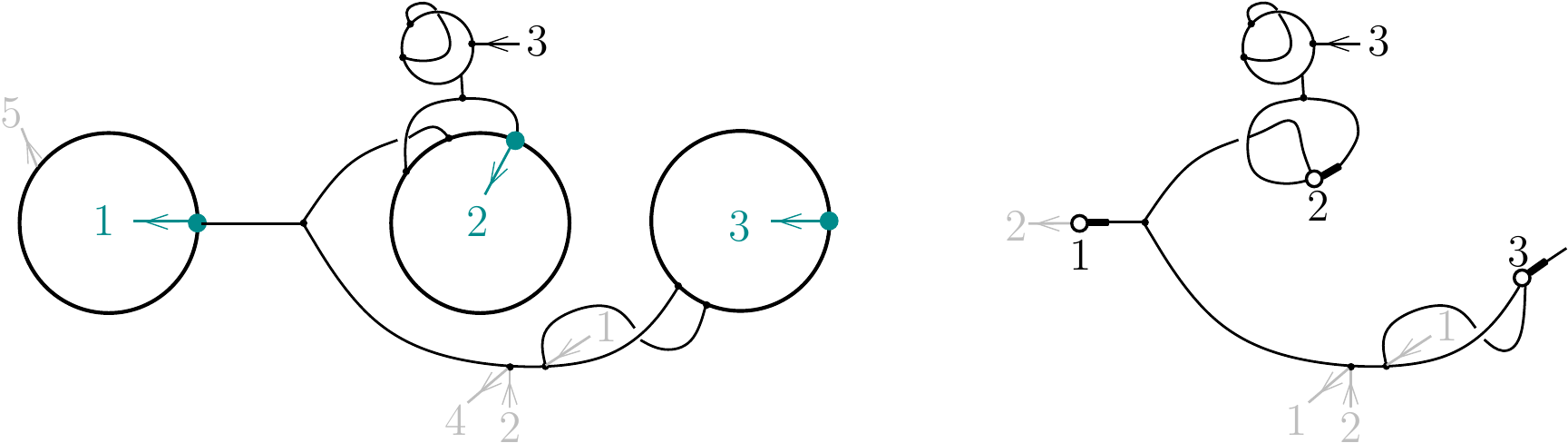}
  \caption{On the left an admissible fat graph that is essentially trivalent at the boundary and on the right its corresponding black and white graph}
  \label{BW_Fat_cons}
\end{figure}
\end{rmk}

However, this natural association does not give a chain map between $\bwgraphs$ and the chain complex constructed by Godin.  To realize this, note that by expanding white vertices to admissible cycles on a black and white graph, all black vertices remain unchanged i.e., a black vertex of degree $n$ is sent to a black vertex of degree $n$.  However, a white vertex of degree $n$ is sent to an admissible cycle with $n+1$ edges where the sum of the degrees of its vertices is at most $1$.  Instead of giving a chain map we will construction a filtration 
\[\Fatad\ldots \supset \Fat^{n+1} \supset \Fat^{n} \supset \Fat^{n-1} \ldots \Fat^{1}\supset\Fat^{0}\]
that gives a cell-like structure on $\Fatad$ where the quasi-cells are indexed by black and white graphs i.e., $\vert \Fat^{n}\vert / \vert \Fat^{n-1}\vert\cong\vee S^n$ where the wedge sum is indexed by isomorphism classes of black and white graphs of degree $n$.  

\subsubsection{The Filtration}

In order to give such a filtration we use a mixed degree on $\Fatad$ which is given by the valence of the vertices and the number edges on the admissible cycles.

\begin{dfn}
Let $\Gamma$ be an admissible fat graph with $k$ admissible cycles.  Let $E_a$ denote the set of edges on the admissible cycles, $V_{b}$ the set of vertices that do not belong to the admissible cycles, $V_a$ the set of vertices on the admissible cycles which are not attached to an admissible leaf, and $V_{a,*}$ be the set of vertices on the admissible cycles which are attached to an admissible leaf.  The \emph{mixed degree of $\Gamma$} is 
\[\md(\Gamma):=\vert E_a\vert -k + \sum_{v\in V_a\cup V_{b}}(\vert v\vert -3)+ \sum_{v\in V_{a,*}}(\max\lbrace0,\vert v\vert-4\rbrace)\]
Figure \ref{mixed_degree} shows some examples of admissible fat graphs of mixed degree two.
\end{dfn}
Notice that the mixed degree is well defined for isomorphism classes of admissible fat graphs.  We will use this degree to describe a filtration of $\Fatad$

\begin{figure}
  \centering
    \includegraphics[scale=0.7]{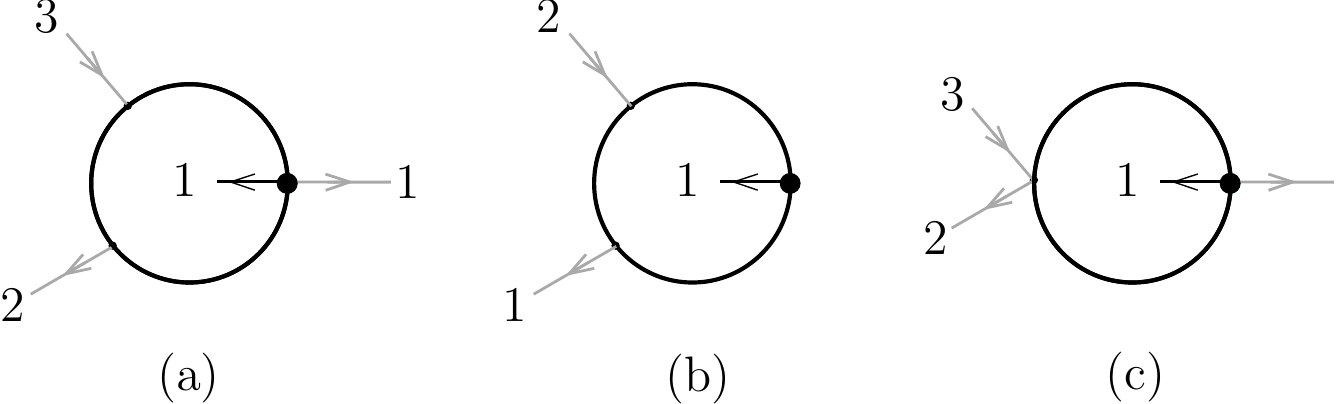}
  \caption{Three different admissible fat graphs all of mixed degree two.  In particular (a) is $l_3$ and (b) is $\tilde{l}_3$.}
  \label{mixed_degree}
\end{figure}

\begin{dfn}
$\Fat^n$ is the full subcategory of $\Fatad$ on objects isomorphism classes of admissible fat graphs $[\Gamma]$ s.t. $\md([\Gamma])\leq n$.
\end{dfn}

\subsubsection{The Quasi-cells}  We now describe the quasi-cell corresponding to a black and white graph $G$.
\begin{dfn}
An admissible fat graph $\tilde{\Gamma}$ is a \emph{blow-up} of an admissible fat graph $\Gamma$ if there is an edge $e$ of $\tilde{\Gamma}$ such that $\Gamma=\tilde{\Gamma}/e$.  Furthermore, $\tilde{\Gamma}$ is a \emph{blow-up away from the admissible boundary} if $e$ does not belong to an admissible cycle in $\tilde{\Gamma}$. If $e$ contains a vertex on an admissible cycle but does not belong to one we say $\tilde{\Gamma}$ is obtained from $\Gamma$ by \emph{pushing away from the admissible cycles}. Finally, $\tilde{\Gamma}$ is a \emph{blow-up at the admissible boundary} if $e$ belongs to an admissible cycle in $\tilde{\Gamma}$.  
\end{dfn}

\begin{dfn}
A white vertex on a black and white graph is called \emph{generic} if all its leaves are labeled and \emph{suspended} otherwise.  Similarly, an admissible cycle $C$ in a graph which is essentially trivalent at the boundary is called \emph{generic} if the vertex connected to the admissible leaf has valence at least $4$ and \emph{suspended} otherwise.
\end{dfn}

\begin{dfn} We define the following full subcategories of $\Fatad$
\begin{itemize}
\item[-] For $n\geq 3$, $\Tcat_n$ is the full subcategory of $\Fatad$ on objects trees with $n$ leaves $\lbrace1, 2, \ldots n\rbrace$ occurring in that cyclic order.

\item[-] Let $l_n$ be the admissible fat graph of mixed degree $n-1$ with one admissible boundary cycle which consists of $n$ edges, together with $n$ leaves labeled $\lbrace 1,2 \ldots n\rbrace$ attached to it in that cyclic order, such that leaf $1$ is attached to the vertex connected to the admissible leaf, see Figure \ref{mixed_degree} (a).  $\Lcat_n$ is the full subcategory of $\Fatad$ on objects $l_n$ and all admissible fat graphs $[\Gamma]$ obtained from $l_n$ by collapsing edges at the admissible cycles and blow-ups away from the admissible cycles.  See Figure \ref{L3} for an example.

\item[-] Let $\tilde{l}_n$ be the admissible fat graph of mixed degree $n-1$ with one admissible boundary cycle which consists of $n$ edges and $n-1$ leaves labeled $\lbrace 1,2 \ldots n-1\rbrace$ attached to it in that cyclic ordering such that there is no leaf attached to the vertex connected to the admissible leaf, see Figure \ref{mixed_degree} (b).  $\tLcat_n$ is the full subcategory of $\Fatad$ on objects $\tilde{l}_n$ and all admissible fat graphs $[\Gamma]$ obtained from $\tilde{l}_n$ by collapsing edges at the admissible cycles and blow-ups away from the admissible cycles.  See Figure \ref{tL3} for an example.
\end{itemize}
\end{dfn}

\begin{figure}
  \centering
    \includegraphics[scale=0.7]{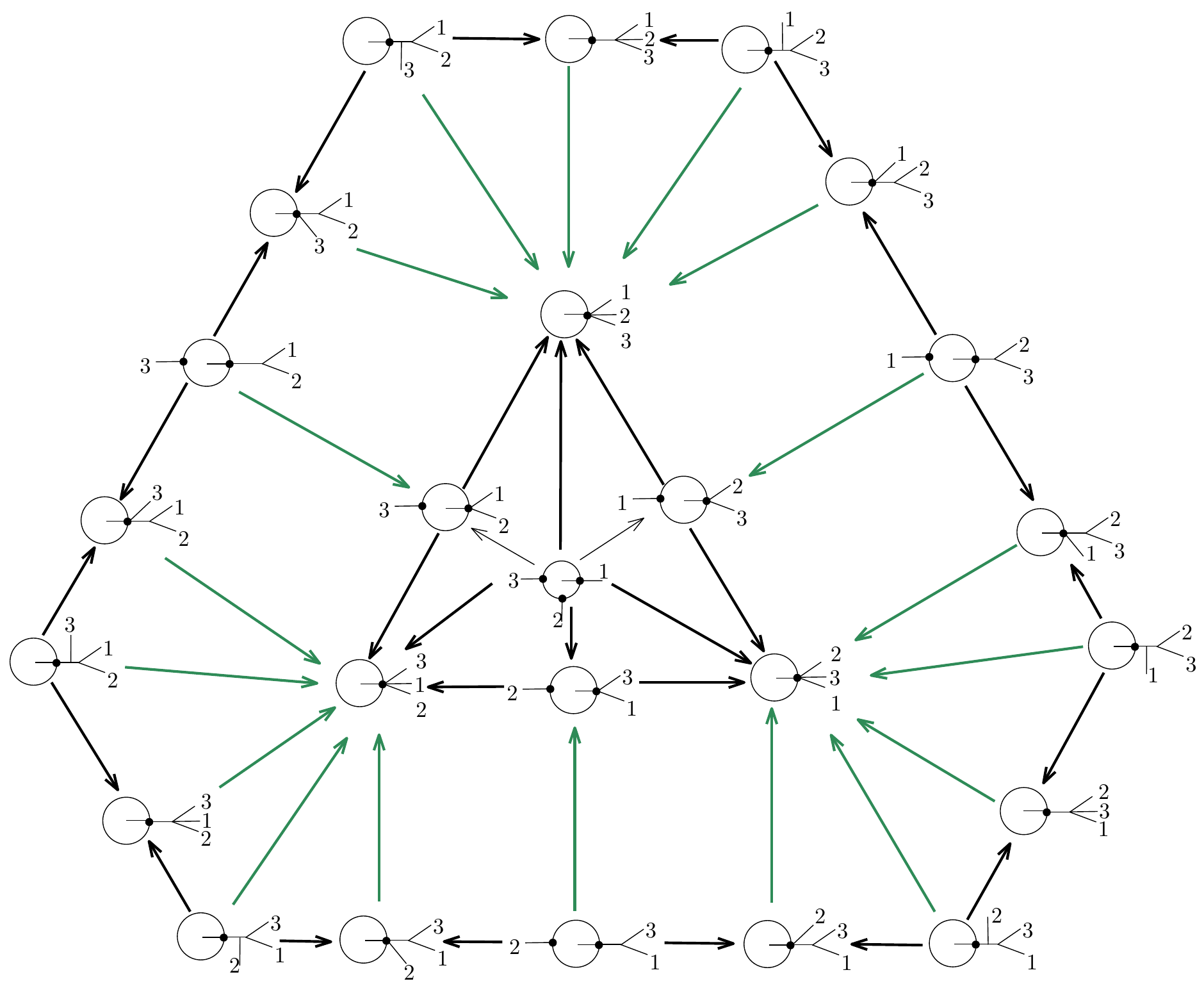}
  \caption{The category $\Lcat_3$.  The arrows in green indicate the deformation retraction onto the core $\Cat_3$}
  \label{L3}
\end{figure}

\begin{figure}
  \centering
    \includegraphics[scale=0.7]{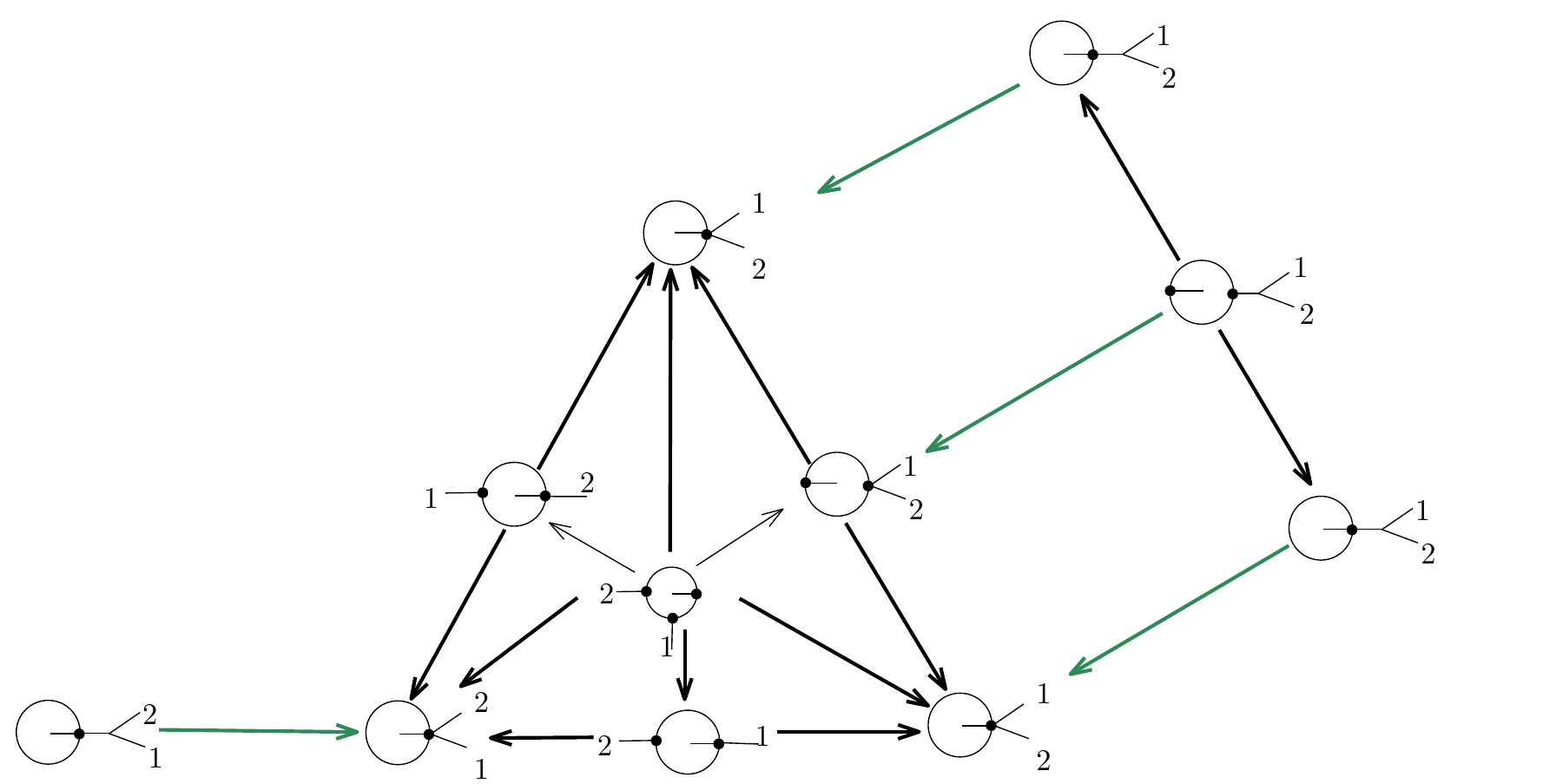}
  \caption{The category $\tLcat_3$.  The arrows in green indicate the deformation retraction onto the core $\tCat_3$}
  \label{tL3}
\end{figure}

\begin{dfn}
Let $G$ be a black and white graph, $V_b$ be the set of its black vertices, $V_g$ be the set of generic white vertices and $V_{s}$ be the set of suspended white vertices.  The \emph{quasi-cell} of $G$ is the category
\[\Ecat_G\cong
\prod_{v\in V_{b}}\Tcat_{\vert v\vert}\times
\prod_{v\in V_g}\Lcat_{\vert v\vert}\times
\prod_{v\in V_{s}}\tLcat_{\vert v\vert}
\]
\end{dfn}

\begin{rmk}
\label{Godin_asso}
In \cite{Godinunstable}, Godin shows that $\vert\Tcat_n\vert$ is homeomorphic to a disk $D^{n-3}$.  In fact by choosing a root of the trees in $\Tcat_n$ we can show that $\vert\Tcat_n\vert$ is a Stasheff polyhedron or associahedron whose vertices are given by  different ways in which we can bracket a product of $n-1$ variables.
\end{rmk}

\begin{dfn}
The \emph{core of $\Lcat_n$} which we denote $\Cat_n$ is the full subcategory of $\Lcat_n$ on objects obtained from $l_n$ by edge collapses.  Similarly the \emph{core of $\tLcat_n$} which we denote $\tCat_n$ is the full subcategory of $\tLcat_n$ on objects obtained from $\tilde{l}_n$ by edge collapses.
\end{dfn}
\begin{rmk}
Note that $\Cat_n$ is the full subcategory of $\Lcat_n$ on objects admissible fat graphs of mixed degree $n$.
\end{rmk}

\begin{dfn}
\hspace{20 mm}
\begin{itemize}
\item[-] The \emph{boundary of $\Cat_n$ (respectively $\tCat_n$)}, which we denote $\partial\Cat_n$ (respectively $\partial\tCat_n$), is the full subcategory of $\Cat_n$ (resp. $\tCat_n$) on objects different from $l_n$ (respectively $\tilde{l}_n$).
\item[-] The \emph{interior of the realization of $\Cat_n$}, is the subspace 
\[int(\vert \Cat_n\vert )= \vert \Cat_n\vert-\vert \partial\Cat_n\vert\]
similarly
\[int(\vert \tCat_n\vert )= \vert \tCat_n\vert-\vert \partial\tCat_n\vert\]
\end{itemize}
\end{dfn}

\begin{lem}
\label{core_simplex}
The nerve $N\Cat_n$ is isomorphic to the barycentric subdivision of $\Delta[n-1]$, thus $\vert \Cat_n\vert$ is homeomorphic to $\Delta^{n-1}$.  The interior of the realization of $\tCat_n$ is homeomorphic to the interior of $\Delta^{n-1}$ i.e., $int(\vert \tCat_n\vert)\cong int(\Delta^{n-1})$.
\end{lem}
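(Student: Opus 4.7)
The plan is to identify $|\Cat_n|$ and $\mathrm{int}(|\tCat_n|)$ with combinatorial models of $\Delta^{n-1}$ via the metric-graph interpretation of Remark \ref{metric_nerve}. For $\Cat_n$, I would first check that the objects of $\Cat_n$ are in bijection with proper subsets $S \subsetneq \{e_1,\dots,e_n\}$ of the admissible cycle edges of $l_n$: any such $S$ is a forest in $l_n$ since at least one cycle edge survives, its collapse is still admissible, and distinct $S$ yield non-isomorphic collapses because the labeled leaves of $l_n$ rigidify every vertex. Since any morphism in $\Fatad$ is realized uniquely as a forest collapse (as recorded in the remark preceding the definition of $\Fatad$), a morphism $l_n/S \to l_n/S'$ in $\Cat_n$ exists if and only if $S \subseteq S'$, and is then unique. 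Hence $\Cat_n$ is equivalent as a category to the poset of proper subsets of $\{1,\dots,n\}$ under inclusion, which under complementation $S \mapsto S^c$ is anti-isomorphic to the poset of non-empty subsets of $\{1,\dots,n\}$ under inclusion. Since the nerves of a poset and of its opposite have the same geometric realization, $N\Cat_n$ will be isomorphic as a simplicial complex to $\mathrm{sd}(\Delta[n-1])$, and therefore $|\Cat_n| \cong \Delta^{n-1}$.

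For $\tCat_n$, I would instead construct the map
\[
\tilde\phi \colon \Delta^{n-1} \longrightarrow |\tCat_n|, \qquad (\lambda_1,\dots,\lambda_n) \longmapsto [\tilde l_n,\lambda],
\]
sending the point with barycentric coordinates $(\lambda_1,\dots,\lambda_n)$ to the isomorphism class of metric admissible fat graphs in which the $i$-th admissible cycle edge of $\tilde l_n$ has length $\lambda_i$, edges of length zero being collapsed. By Remark \ref{metric_nerve} every point of $|\tCat_n|$ is represented by such a class, so $\tilde\phi$ is a continuous surjection. On the open simplex $\mathrm{int}(\Delta^{n-1}) = \{\lambda_i > 0\}$ the underlying graph is $\tilde l_n$, and since every leaf of $\tilde l_n$ is labeled it has only the identity automorphism; hence $\tilde\phi$ is injective on $\mathrm{int}(\Delta^{n-1})$. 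Moreover the image $\tilde\phi(\mathrm{int}(\Delta^{n-1}))$ is exactly the locus where the underlying graph equals $\tilde l_n$, which is $|\tCat_n|\setminus|\partial\tCat_n| = \mathrm{int}(|\tCat_n|)$.

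To finish, since $\tilde\phi$ is a continuous surjection from a compact space to a Hausdorff space it is a quotient map, so its restriction to the saturated open set $\mathrm{int}(\Delta^{n-1})$ is an open continuous bijection onto $\mathrm{int}(|\tCat_n|)$ and hence a homeomorphism. The main subtlety, and the reason the lemma asserts only the interior statement for $\tCat_n$, is that $\tilde\phi$ does fail to be injective on $\partial\Delta^{n-1}$: because the vertex of $\tilde l_n$ carrying the admissible leaf has no labeled leaf attached to it, collapsing certain distinct subsets of cycle edges produces isomorphic admissible fat graphs. This is already visible for $n=2$, where $\tilde\phi$ identifies the two vertices of $\Delta^1$ and one obtains $|\tCat_2|\cong S^1$, so the simplicial structure on $\Delta^{n-1}$ does not descend past the interior to all of $|\tCat_n|$.
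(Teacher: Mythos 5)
Your argument for $\Cat_n$ is essentially the paper's: identify objects with subsets of the admissible cycle edges (the paper works with the under-category $l_n / \Lcat_n$ and sends $\zeta_\beta$ to its complement in $\{0,\dots,n-1\}$, which realizes the same poset identification you get via $S \mapsto S^c$), and note rigidity from the labeled leaves to see that morphisms are unique collapses. For $\tCat_n$ the two arguments diverge in flavor. The paper stays categorical: it shows the under-category $\tilde l_n/\tCat_n$ is again $\Pcat[n-1]$, observes that the forgetful functor $F:\tilde l_n/\tCat_n\to\tCat_n$ is injective on morphisms but not on objects (the example $\zeta_{\beta_1}=\{e_1,\dots,e_{n-1}\}$, $\zeta_{\beta_2}=\{e_0,\dots,e_{n-2}\}$), and asserts that the realization $|F|:\Delta^{n-1}\to|\tCat_n|$ is injective on the interior. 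You instead work with the metric-graph model of Remark \ref{metric_nerve}, define the comparison map directly by assigning lengths to the cycle edges of $\tilde l_n$, and close the argument with the standard compact-to-Hausdorff / saturated-open-subset observation to upgrade the continuous bijection on the interior to a homeomorphism. This is a genuinely different mechanism for the last step, and it has the advantage of making explicit why one gets a homeomorphism (and not merely an injective continuous map) on the interior, a point the paper leaves implicit. Your map $\tilde\phi$ is in fact $|F|$ under the identification $|\tilde l_n/\tCat_n|\cong\Delta^{n-1}$, so the two proofs are ultimately reasoning about the same map; you have added the topological bookkeeping. The $n=2$ example is a good sanity check for the failure of injectivity on the boundary, and your diagnosis of why injectivity fails (the vertex carrying the admissible leaf has no labeled leaf to break the residual symmetry) matches the paper's counterexample.
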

\begin{proof}
Note first that the fat structure together with the admissible leaf induce an ordering of the vertices on the admissible cycles of $l_n$ and $\tilde{l}_n$, where the first vertex is the vertex connected to the admissible leaf. In the case of $\Cat_n$, for $0\leq i\leq n-1$, let $e_i$ denote the edge connecting the vertices $(i-1)$ and $i$.  Let $[n]$ denote the set $\lbrace 0, 1, \dots  n\rbrace$.  It is enough to show that $\Cat_n$ is isomorphic to the poset category $\Pcat([n-1])$.  Note that the fat structure and the labeling of the leaves gives that for any object $[\Gamma]$ in $\Cat_n$ there is a unique morphism $l_n\to [\Gamma]$.  Therefore $\Cat_n$ is isomorphic to the under-category $l_n/\Lcat_n$. An object $\beta:l_n\to [\Gamma]$ in $l_n/\Lcat_n$, is uniquely determined by a set of edges on the admissible cycle $\zeta_{\beta}:=\lbrace  e_{\beta_1}, e_{\beta_2},\ldots e_{\beta_r}\rbrace$ whose union is not the entire boundary cycle.  For the object given by the identity, the set  $\zeta_{id}$ is the empty set.  We define a functor $\Phi:l_n/\Lcat_n\to\Pcat([n-1])$ on objects by $\Phi(\beta):=\lbrace 0, 1,\ldots ,n-1\rbrace-\lbrace\beta_1,\beta_2,\ldots \beta_r\rbrace$ this induces a natural map on morphisms and it is easy to see that it is an isomorphism.
 
In the case of $\tCat_n$, for $0\leq i\leq n-1$ let $e_i$ denote the edge connecting the vertices $i$ and $(i+1)$.  Then the argument above shows that $\tilde{l}_n/\tCat_n$ is isomorphic to $\Pcat[n-1]$.  However, the forgetful functor $F:\tilde{l}_n/\tCat_n\to \tCat_n$ is injective on morphisms but not on objects.  To see this, let $\zeta_{\beta_1}:=\lbrace e_1,e_2,\ldots ,e_{n-1}\rbrace$ and let $\zeta_{\beta_2}:=\lbrace e_0,e_1,\ldots ,e_{n-2}\rbrace$, then $F(\beta_1)=F(\beta_2)$.  Therefore the realization of $\tCat_n$ is not homeomorphic to the simplex. However, the geometric realization of $F$ induces a map $\vert F \vert: \vert \tilde{l}_n/\tCat_n \vert\cong\vert \Pcat[n-1]\vert=\Delta^{n-1} \to \vert\tCat_n\vert$ which is injective on the interior of the simplex.
\end{proof}

\begin{dfn}
Let $\Gamma$ be an admissible fat graph, $V_a$ be the set of vertices on the admissible cycles which are not attached to an admissible leaf, and $V_{a,*}$ be the set of vertices on the admissible cycles which are attached to an admissible leaf.  Let $\xi_\Gamma$ be the set 
\[\xi_\Gamma:=\lbrace v\in V_a\vert \vert v\vert > 3\rbrace\cup\lbrace v\in V_{a,*}\vert \vert v\vert > 4\rbrace\]  
We can construct from $\Gamma$, an admissible fat graph which essentially trivalent at the boundary, which we denote $\hat{\Gamma}$, by pushing out all the vertices of $\xi_\Gamma$ i.e., by blow-ups away from the admissible boundary given by a single edge on each of the vertices of $\xi_\Gamma$.  We call this procedure \emph{making the graph $\Gamma$ essentially trivalent}.
\end{dfn}

\begin{dfn}
The \emph{black and white degree of an admissible fat graph $[\Gamma]$} is 
\[\bwd([\Gamma]):=\mathrm{deg}(G_{\hat{\Gamma}})\]
where $[\hat{\Gamma}]$ is the graph obtained by making $[\Gamma]$ essentially trivalent, $G_{\hat{\Gamma}}$ is the black and white graph corresponding to $\hat{\Gamma}$ under the isomorphism given in \ref{iso_bw_Fat3} by collapsing admissible boundaries to white vertices, and $\mathrm{deg}$ is the degree of black and white graphs.
\end{dfn}

We define a few special subcategories which are the building blocks of a quasi-cell.

\begin{dfn}
\label{boundaries} 
\hspace{20 mm}
\begin{itemize}
\item[-] The \emph{boundary of $\Tcat_n$, $\Lcat_n$ and $\tLcat_n$} which we denote  $\partial\Tcat_n$, $\partial\Lcat_n$ and $\partial\tLcat_n$, are the full subcategories of respectively $\Tcat_n$, $\Lcat_n$ and $\tLcat_n$ on objects of mixed degree $k<n$.
\item[-] The \emph{thick boundary of $\Tcat_n$, $\Lcat_n$ and $\tLcat_n$} which we denote  $\eth\Tcat_n$, $\eth\Lcat_n$ and $\eth\tLcat_n$, are the full subcategories of respectively $\Tcat_n$, $\Lcat_n$ and $\tLcat_n$ on objects of black and white degree $k<n$.
\end{itemize}
\end{dfn}

\begin{rmk}
Note that $\eth\Tcat_n = \partial \Tcat_n$.  Moreover, note that $\vert \bL\vert$ intersects $\vert \Lcat_n \vert$  exactly at the boundary of the core $\vert \partial  \Cat_n\vert$, and similarly, $\vert \btL\vert$ intersects $\vert \tLcat_n \vert$  exactly at the boundary of the core $\vert \partial \tCat_n\vert$.
\end{rmk}

We now construct functors $P:\Lcat_n \to \Cat_n$ and $\widetilde{P}:\tLcat_n \to \tCat_n$.  For an object $[\Gamma]$ of $\Lcat_n$, let $F_\Gamma$ denote the sub-forest of all edges that are not on the admissible cycles and are not connected to a leaf.  We define the functor $P$ on objects by $[\Gamma]\mapsto [\Gamma/F_\Gamma]$.  This induces a natural map on morphisms.  To see this, let $\psi_F: [\Gamma]\to [\Gamma/F]$ be a morphism in $\Lcat_n$ and note that $[\Gamma/(F\cup F_\Gamma)]=[(\Gamma/F)/(F_{\Gamma/F})]$.  We define $\widetilde{P}$ similarly, see Figures \ref{L3} and \ref{tL3}.

\begin{lem}
\label{thick_boundary}
The functors $P$ and $\widetilde{P}$ induce maps $\vert P \vert:(\vert \Lcat_n \vert,\vert \partial\Lcat_n\vert) \to (\vert \Cat_n \vert,\vert \partial\Cat_n\vert)$  and $\vert \widetilde{P} \vert:(\vert \tLcat_n \vert,\vert \partial\tLcat_n\vert) \to (\vert \tCat_n \vert,\vert \partial\tCat_n\vert)$ which is are homotopy equivalences of pairs.
\end{lem}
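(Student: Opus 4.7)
The plan is to construct a natural transformation $\eta$ from the identity functor on $\Lcat_n$ to $\iota \circ P$, where $\iota : \Cat_n \hookrightarrow \Lcat_n$ is the inclusion, with component at $[\Gamma]$ given by the collapse morphism $[\Gamma] \to [\Gamma / F_\Gamma]$. Since $F_\Gamma$ is empty for every $[\Gamma] \in \Cat_n$ (all non-leaf edges of such a graph lie on the admissible cycle), one automatically has $P \circ \iota = \mathrm{id}_{\Cat_n}$, and the geometric realization of $\eta$ then provides a simplicial homotopy exhibiting $\vert P \vert$ as a deformation retraction of $\vert \Lcat_n \vert$ onto $\vert \Cat_n \vert$.

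The naturality of $\eta$ is verified by observing that for any collapse morphism $\psi_F : [\Gamma] \to [\Gamma / F]$ in $\Lcat_n$, the forest $F_{\Gamma / F}$ agrees with the image of $F_\Gamma \setminus F$ in $\Gamma / F$, so that both legs of the naturality square reduce to collapsing the common sub-forest $F \cup F_\Gamma$ in $\Gamma$. Here one uses that objects of $\Lcat_n$ are built from $l_n$ by admissible edge collapses and blow-ups away from the admissible cycle, so that $F_\Gamma$ is genuinely a forest (a disjoint union of trees rooted at the admissible cycle) and that collapsing a sub-forest neither introduces new leaves nor destroys admissibility of the cycle.

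For the statement on pairs, the key arithmetic is that collapsing a single edge $e \in F_\Gamma$ merges two vertices of valences $a$ and $b$ into one of valence $a + b - 2$; a short case analysis on which of $V_b$, $V_a$, $V_{a,*}$ the two endpoints belong to (using that $e$ is incident neither on a leaf nor on an admissible cycle) shows that $\md$ strictly increases by one at every such step. Iterating yields $\md([\Gamma / F_\Gamma]) = \md([\Gamma]) + \vert F_\Gamma \vert$, from which one concludes that $P$ sends $\partial \Lcat_n$ into $\partial \Cat_n$. Restricting $\eta$ to the boundary then produces a deformation retraction $\vert \partial \Lcat_n \vert \to \vert \partial \Cat_n \vert$, which together with the absolute statement gives the homotopy equivalence of pairs. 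The argument for $\widetilde{P}$ is structurally identical with $\tilde l_n$ and $\tCat_n$ replacing $l_n$ and $\Cat_n$; the mild non-injectivity of the forgetful functor appearing in the proof of Lemma \ref{core_simplex} is confined to the boundary of the top simplex and does not disturb the homotopy. I expect the main obstacle to be this mixed-degree bookkeeping: one must rule out the possibility that $P$ sends a point of $\partial \Lcat_n$ onto the top cell $l_n$ of $\Cat_n$, which requires the precise form of $\md$ (in particular the distinction between $V_a$ and $V_{a,*}$) together with the constraint that blow-ups in $\Lcat_n$ only occur away from the admissible cycle.
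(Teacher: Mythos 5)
The absolute part of your proof — using the natural transformation $\eta\colon \mathrm{id}_{\Lcat_n}\Rightarrow \iota\circ P$ with components $[\Gamma]\to[\Gamma/F_\Gamma]$ to obtain a deformation retraction of $\vert\Lcat_n\vert$ onto $\vert\Cat_n\vert$ — is exactly the argument in the paper and is correct. The pair statement, however, has a genuine gap, and it is precisely the point where the paper's proof becomes substantially more involved.

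The issue is structural: you write that "restricting $\eta$ to the boundary then produces a deformation retraction $\vert\partial\Lcat_n\vert\to\vert\partial\Cat_n\vert$." But $\partial\Cat_n$ is \emph{not} a subcategory of $\partial\Lcat_n$; they are disjoint at the level of objects. The boundary $\partial\Lcat_n$ consists of the objects whose mixed degree drops below that of $l_n$, i.e.\ precisely $\Lcat_n\setminus\Cat_n$, whereas $\partial\Cat_n\subset\Cat_n$. So the target of $\eta_\Gamma=[\Gamma]\to[\Gamma/F_\Gamma]$ for $[\Gamma]\in\partial\Lcat_n$ always lies in $\Cat_n$, hence \emph{outside} $\partial\Lcat_n$, and there is no endofunctor $\iota'\circ p$ of $\partial\Lcat_n$ for $\eta$ to compare against. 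In particular $\vert\partial\Cat_n\vert$ is not a subspace of $\vert\partial\Lcat_n\vert$, so the notion of deformation retracting one onto the other does not even type-check. This is why the paper cannot re-use the natural transformation and instead decomposes $p=\psi_{n-1}\circ\cdots\circ\psi_1$ through a chain of intermediate categories $X_1\supset\cdots\supset X_n$ filtered by the depth $\dep(\Gamma)=\vert E_a\vert$, proving that each $\psi_i$ is a homotopy equivalence via Quillen's Theorem A together with a three-way case analysis of the over-categories $\psi_i/\Gamma$ (the hardest case reducing, through the auxiliary categories $\Dcat_1,\Dcat_2$, to the contractibility of a product of associahedra $\Tcat_{k_j+1}$).

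A secondary, related point: your mixed-degree computation $\md([\Gamma/F_\Gamma])=\md([\Gamma])+\vert F_\Gamma\vert$ is correct, but the conclusion you draw from it — that $P$ sends $\partial\Lcat_n$ into $\partial\Cat_n$ — is a non sequitur. \emph{Every} object of $\Cat_n$ has the same mixed degree (collapses of cycle edges of $l_n$ preserve $\md$, since the decrease in $\vert E_a\vert$ is exactly offset by the increased valence contribution), so mixed degree cannot tell $l_n$ apart from the rest of $\Cat_n$. The statement $P(\partial\Lcat_n)\subset\partial\Cat_n$ is in fact true, but for a different reason: $l_n$ admits no blow-up away from the admissible cycle that stays admissible with all inner vertices of valence $\geq 3$, so $P^{-1}(l_n)=\{l_n\}$ and $l_n\notin\partial\Lcat_n$. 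In any case, this preliminary observation only gets you a well-defined functor $p\colon\partial\Lcat_n\to\partial\Cat_n$; it does not give you the homotopy equivalence on boundaries, which is the actual content of the lemma and which your proposal leaves unaddressed.
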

\begin{proof}
In this proof we always use isomorphism classes of graphs, but we exclude the brackets from the notation, to avoid clutter. Note that the objects of $\Cat_n$ have no edges which are not on the admissible cycles or connected to a leaf, thus $P$ is the identity on objects of the core.  Therefore, $P$ restricts to a functor $p:=P\vert: \partial\Lcat_n \to \partial\Cat_n$. We show first that $\vert P \vert$ is a homotopy equivalence. Let $\iota$ denote the inclusion functor $\iota:\Cat_n\cof \Lcat_n$.  It is clear that $P\circ \iota = id_{\Cat_n}$.  On the other hand, we have a natural transformation $\eta:id_{\Lcat_n}\Longrightarrow \iota\circ P$ given by $\eta_{\Gamma}:\Gamma\to \Gamma/F_\Gamma$.  So $\vert P\vert$ is a homotopy equivalence.  Note that, $\eta_{\Gamma}=id_{\Gamma}$ for $\Gamma\in\Cat_n$. Therefore, $\vert \eta \vert$ is a strong deformation retraction of $\Lcat_n$ onto its core. This argument depends only on the fact that there is a unique morphism $\Gamma \to P(\Gamma)$.  We will this idea several times in what comes next.

The functor $p$, pushes $\partial\Lcat_n$ onto $\partial\Cat_n$.  We define a notion of depth, and show that $p$ is the composition of $n-1$ functors which sequentially push in the graphs according to their depth and that each functor induce a homotopy equivalence on realizations.  Let $\Gamma$ be an object of $\Lcat_n$.  The \emph{depth of $\Gamma$} is
\[\dep(\Gamma):=\vert E_a\vert\]
where $E_a$ is the set of edges on the admissible cycle.  Recall that $\vert \Cat_n\vert$ is the barycentric subdivision of $\Delta[n-1]$, and thus we can interpret an object $\Gamma$ in $\Cat_n$ as representing a face of $\Delta[n-1]$ of a certain dimension.  We call this the \emph{dimension of $\Gamma$} and denote it $\dim(\Gamma)$.  For $1\leq i\leq n$ we define a category $X_i$ to be the full subcategory of $\eth\Lcat_n$ on objects:
\begin{itemize}
\item[-] $\Gamma\in\partial\Lcat_n$ such that $\dep(\Gamma)\geq i$
\item[-] $\Gamma\in\partial\Cat_n$ such that $\Gamma$ represents a face of  $\vert\Cat_n \vert$ of dimension $\leq n-2$ 
\end{itemize}

Note that for $\Gamma\in\partial\Lcat_n$, it holds that $1\leq \dep(\Gamma)\leq n-1$. Therefore, $X_1=\partial\Lcat_n$ and $X_n=\partial\Cat_n$.  

For $1\leq i\leq n-1$ we define a functors $\psi_i:X_i\to X_{i+1}$ on objects by:

\[\psi_i(\Gamma):=\left\lbrace 
\begin{array}{cl}
p(\Gamma) & \Gamma\in\partial\Lcat_n, \text{ }\dep(\Gamma)=i, \\
\Gamma  & \text{else}
\end{array}
\right. 
\] 
with the natural map induced on morphisms.

Thus, we have a sequence of functors 
\[\partial\Lcat_n= X_1 \stackrel{\psi_1}{\longrightarrow} X_2\stackrel{\psi_2}{\longrightarrow}\ldots X{n-1}=\stackrel{\psi_{n-1}}{\longrightarrow} X_n=\partial\Cat_n\]
and it clearly holds that $p:=\psi_{n-1}\circ\ldots\psi_2\circ\psi_1$.

The over category $\psi_i/\Gamma$ has objects $(\tilde{\Gamma},\alpha)$ where $\tilde{\Gamma}\in X_i$ and $\alpha: p(\tilde{\Gamma})\to\Gamma$ is a morphism in  $X_{i+1}$. Morphisms from $(\tilde{\Gamma_1},\alpha_1)$ to $(\tilde{\Gamma_2},\alpha_2)$ in $\psi_i/\Gamma$ are given by morphisms $\beta$ in $X_i$ such that the bottom triangle in diagram \ref{left_fiber} commutes.

\begin{equation}
\begin{tikzpicture}[scale=0.5]
\node (a) at (0,0){$p(\tilde{\Gamma}_1)$};
\node (b) at (5,0) {$p(\tilde{\Gamma}_2)$};
\node (c) at (0,3){$\tilde{\Gamma}_1$};
\node (d) at (5,3){$\tilde{\Gamma}_2$};
\node (e) at (2.5,-3){$\Gamma$};

\path[auto,arrow,->] (c) edge node{$\beta$} (d)
					 (c) edge node{} (a)
					 (d) edge node{} (b)
					 (a) edge node{$p(\beta)$} (b)
					 (a) edge node [swap] {$\alpha_1$} (e)
					 (b) edge node{$\alpha_2$} (e);	
\end{tikzpicture}
\label{left_fiber}
\end{equation}

We separate $\psi_i/\Gamma$ into three different cases
\begin{description}
\item[If $\Gamma\in\partial\Lcat_n$]  Morphisms of fat graphs are given by collapsing edges.  Thus, if $\Gamma\in\partial\Lcat_n$, all the graphs and arrows in diagram \ref{left_fiber} are be objects and morphisms in $X_i$.  Therefore $\psi_i/\Gamma=X_i/\Gamma$ which is a contractible category.

\item[If $\Gamma\in\partial\Cat_n$, $\dim(\Gamma)\leq i-2$] For $j=1,2$, the graph $\tilde{\Gamma}_j$ is a blow-up away from the admissible boundary of the graph $p(\tilde{\Gamma}_j)$.  Moreover, the condition on the dimension of $\Gamma$ implies that $\Gamma\in X_i$.  Thus, the existence of morphisms $\alpha_j$ implies that there are morphisms $\tilde{\alpha}_j:\tilde{\Gamma}_j\to\Gamma$ in $X_i$.  Then the category $\psi_i/\Gamma$ is contractible, since the object $(\Gamma,id_\Gamma)$ is terminal.

\item[If $\Gamma\in\partial\Cat_n$, $\dim(\Gamma)= i-1$] In this case, $\Gamma$ is not an object in $X_i$.  However, by the case above, we can see that the objects of $\psi/\Gamma$ are all blow-ups of $\Gamma$ together with a map to $\Gamma$ in $X_{i+1}$.  These collapse maps onto $\Gamma$ are unique.  Therefore, $\psi/\Gamma$ is the full subcategory of $X_i$ on objects that are are blow-ups of $\Gamma$.  Let $\Dcat_1$ denote the full subcategory of $X_i$ on objects that are obtained from $\Gamma$ by blow-ups away from the admissible cycle.  Similarly, let $\Dcat_2$ denote the full subcategory of $X_i$ on objects that are obtained from $\hat{\Gamma}$ by blow-ups away from the admissible cycle, where $\hat{\Gamma}$ is the graph obtained by making $\Gamma$ essentially trivalent at the boundary.  Then, we have inclusions of categories 

\[\Dcat_2\cof \Dcat_1\cof \psi_i/\Gamma\]

Let $\tilde{\Gamma}$ be an object in $\psi_i/\Gamma$  which is not an object in $\Dcat_1$.  There is a unique morphism $\gamma_{\tilde{\Gamma}}$ in $X_{i+1}$ of the form $\gamma_{\tilde{\Gamma}}:p(\tilde{\Gamma})\to \Gamma$ and this morphism is given by collapsing edges on the admissible cycle.  Note that $\tilde{\Gamma}$ and $p(\tilde{\Gamma})$ have the same structure on the admissible cycle, in particular they have the same number of edges on the admissible cycle. Thus, the map $\gamma_{\tilde{\Gamma}}$ lifts to a unique map $\gamma_{\tilde{\Gamma} *}:\tilde{\Gamma}\to\Gamma'$ where $\Gamma'$ is an object in $\Dcat_1$. More precisely, the morphism $\gamma_{\tilde{\Gamma} *}$ is given by collapsing the same edges on the admissible cycles of $\tilde{\Gamma}$ that  $\gamma_{\tilde{\Gamma}}$ collapses on the admissible cycles of $p(\tilde{\Gamma})$.  This defines a functor $G_1:\psi_i/\Gamma\to \Dcat_1$ that is the identity on objects of $\Dcat_1$ and on all other objects it is given by 
$\tilde{\Gamma}\mapsto \gamma_{\tilde{\Gamma} *}(\tilde{\Gamma})$.  Note that since $\gamma_{\tilde{\Gamma} *}$ is uniquely defined, the same argument used to show that $P$ induces a homotopy equivalence shows that $G_1$ induces a homotopy equivalence on realizations.  
Similarly, define a functor $G_2:\Dcat_1\to \Dcat_2$ that it is given on objects by $\Gamma\mapsto\widehat{\Gamma'}$ where $\widehat{\Gamma'}$ is the graph obtained from $\Gamma'$ by making it essentially trivalent at the boundary.  Note that $G_2$ is the identity on objects of $\Dcat_2$ and that there is a unique morphism $\widehat{\Gamma'}\to \Gamma'$.  Thus the same argument shows that $G_2$ induces a homotopy equivalence on realizations.

Finally, we show that $\Dcat_2$, the subcategory on objects that are obtained from $\hat{\Gamma}$ by blow-ups away from the admissible cycle, has a contractible realization.  Let $v_1, v_2 \ldots v_r$ denote the vertices on the admissible cycle of $\Gamma$ and let $k_1, k_2, \ldots k_r$ denote the number of leaves that are attached at each vertex.  Consider the functor
\[\Phi:\Dcat_2\longrightarrow \prod_{j=1}^r \Tcat_{k_j+1}\]
that it is given on objects by $\tilde{\Gamma}\to(T_1,T_2, \ldots, T_r)$, where $T_j$ is the tree attached to the vertex $v_j$ of $\tilde{\Gamma}$ and the map on morphisms is defined in a natural way.  It is easy to see that $\Phi$ induces an isomorphisms of categories.  The inverse functor is give by reattaching the trees at the vertices of the admissible cycle.  Then by Remark \ref{Godin_asso}, $\Dcat_2$ is a contractible category and thus so is $\psi_i/\Gamma$
\end{description}

Then by Quillen's Theorem A, each $\psi_i$ induce a homotopy equivalence and therefore so does $p$.  The proof for $\widetilde{P}$ follows exactly the same way.

\end{proof}

We define subcategories and sub-spaces of the quasi-cell of a black and white graph $G$.

\begin{dfn}
The \emph{core of the quasi-cell of $G$} is 
\[\overline{\ie}_G:=\prod_{v\in V_{b}} (\Tcat_{\vert v\vert})\times
\prod_{v\in V_g} ( \Cat_{\vert v\vert})\times
\prod_{v\in V_{s}} ( \tCat_{\vert v\vert} )\]
The \emph{boundary of the core of the quasi-cell of $G$} is 
\[\partial\overline{\ie}_G:=\prod_{v\in V_{b}} (\partial\Tcat_{\vert v\vert})\times
\prod_{v\in V_g} ( \partial\Cat_{\vert v\vert})\times
\prod_{v\in V_{s}} ( \partial\tCat_{\vert v\vert} )\]
The \emph{boundary of the quasi-cell of $G$} is 
\[\partial\Ecat_G\cong
\prod_{v\in V_{b}}\partial\Tcat_{\vert v\vert}\times
\prod_{v\in V_g}\partial\Lcat_{\vert v\vert}\times
\prod_{v\in V_{s}}\partial\tLcat_{\vert v\vert}
\]
The \emph{thick boundary of the quasi-cell of $G$} is 
\[\eth\partial\Ecat_G\cong
\prod_{v\in V_{b}}\eth\Tcat_{\vert v\vert}\times
\prod_{v\in V_g}\eth\Lcat_{\vert v\vert}\times
\prod_{v\in V_{s}}\eth\tLcat_{\vert v\vert}
\]
The \emph{open quasi-cell of $G$} is 
\[\ie_G:=\prod_{v\in V_{b}} int(\vert\Tcat_{\vert v\vert}\vert)\times
\prod_{v\in V_g} int(\vert \Cat_{\vert v\vert}\vert)\times
\prod_{v\in V_{s}} int(\vert \tCat_{\vert v\vert} \vert)\]
\end{dfn}

\begin{cor}
\label{hom_eq_pairs}
There is a functor $P_G:\Ecat_G\to \overline{\ie}_G$ that after realization, induces a homotopy equivalence of pairs 

\[\vert P_G\vert :(\vert \Ecat_G\vert, \vert \partial\Ecat\vert)\to 
(\vert \overline{\ie}_G\vert, \partial \vert \overline{\ie}_G\vert)\]
\end{cor}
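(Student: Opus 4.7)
The plan is to build $P_G$ coordinate-wise from the functors already analyzed in Lemma~\ref{thick_boundary}. Concretely, I would set
\[
P_G \;:=\; \prod_{v\in V_b}\mathrm{id}_{\Tcat_{|v|}} \;\times\; \prod_{v\in V_g} P_{|v|} \;\times\; \prod_{v\in V_s}\widetilde{P}_{|v|},
\]
which is a well-defined functor $\Ecat_G\to \overline{\ie}_G$ because both sides decompose as products indexed by the same set of vertices of $G$. Since the nerve and geometric realization functors commute with finite products of small categories, upon realization $|P_G|$ is literally the product of the factor maps $|P_{|v|}|$, $|\widetilde{P}_{|v|}|$ and the identity on $|\Tcat_{|v|}|$.

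Next I would check that $|P_G|$ genuinely is a map of pairs. This is immediate from the given definitions, since
\[
|\partial\Ecat_G| \;=\; \prod_v |\partial\Tcat_{|v|}|\times\prod_v|\partial\Lcat_{|v|}|\times\prod_v|\partial\tLcat_{|v|}|
\]
and similarly for $|\partial\overline{\ie}_G|$, and each factor of $P_G$ respects the given boundary by Lemma~\ref{thick_boundary} (trivially so in the $\Tcat_{|v|}$ factor, where source and target boundary coincide). So $|P_G|$ restricts to a map $|\partial\Ecat_G|\to |\partial\overline{\ie}_G|$.

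Finally, I would conclude by invoking the general fact that a finite product of homotopy equivalences of pairs is again a homotopy equivalence of pairs. The homotopy inverses $Q_{|v|}$ and $\widetilde{Q}_{|v|}$ supplied by the proof of Lemma~\ref{thick_boundary} (via inclusions of cores together with the natural transformations collapsing the forests $F_\Gamma$) preserve the boundary filtration factor-by-factor; their product provides a pair-homotopy inverse to $|P_G|$, and the homotopies $|P_{|v|}|\circ Q_{|v|}\simeq \mathrm{id}$ and $Q_{|v|}\circ |P_{|v|}|\simeq \mathrm{id}$ (relative to the boundary of each factor) assemble under the product into pair-homotopies for $|P_G|$.

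The main content of the corollary is therefore already contained in Lemma~\ref{thick_boundary}; the rest is just the bookkeeping of products. The only mildly delicate point to verify is that the natural transformations $\eta$ and the stratified homotopies built in the proof of Lemma~\ref{thick_boundary} actually take place \emph{through} maps of pairs (i.e.\ send $|\partial\Lcat_n|$ into itself throughout the homotopy), so that their products give honest pair-homotopies after multiplying over all $v$; this is however clear from the construction, since $\eta_\Gamma=\mathrm{id}_\Gamma$ whenever $\Gamma$ is already in the core and the sequence of functors $\psi_i$ never increases the mixed degree. No further obstacle is expected.
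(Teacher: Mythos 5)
Your proposal is correct and is essentially the paper's own argument: the paper likewise defines $P_G$ factor-wise using $P$, $\widetilde{P}$, and the identity on the $\Tcat_{|v|}$ factors, and cites Lemma~\ref{thick_boundary} for the fact that each factor is a homotopy equivalence of pairs. Your extra bookkeeping about products of pair-equivalences and the compatibility of the boundary decompositions is exactly what the paper leaves implicit in its two-line proof.
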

\begin{proof}
This follows immediately from Lemma \ref{thick_boundary}. The functor $P_G$ is obtained by using $P$ and $\widetilde{P}$ on the building blocks of $\Ecat_G$.
\end{proof}

\begin{rmk}
\label{boundary_ln}
Let $\Gamma_{G}$ denote the the fat graph corresponding to a black and white graph $G$. Consider $l_n$ as a black and white graph. For any $G$ in the differential of $l_n$, the graph $\Gamma_{G}$, is obtained from $l_n$ by collapsing $m$ consecutive edges in the admissible cycle for $1\leq m\leq n-1$ and then making the graph essentially trivalent.  
Similarly, consider $\tilde{l}_n$ as a black and white graph. For any $G$ in the differential of $\tilde{l}_n$, $\Gamma_{G}$ is obtained from $\tilde{l}_n$ by collapsing $m$ consecutive edges in the admissible cycle that do not contain the admissible leaf for $1\leq m\leq n-2$ and then making the graph essentially trivalent or by collapsing an edge that contains the admissible leaf. 
\end{rmk}

\begin{rmk}
We have shown that $\vert\Lcat_n\vert$ is an $n-1$ disk whose boundary is a sphere which is given by quasi-cells corresponding to the black and white graphs $G$ in the differential of $l_n$. In an analogous way than for the category $\Tcat_n$, we can interpret the graphs in the differential of $l_n$ as meaningful bracketings on $n$ variables arranged in a circle using one parenthesis.  Thus $\Lcat_n$ is a realization of the cyclohedron.  This realization is  close but not equivalent to the one given in \cite{kaufmann_schwell}.  In fact it seems to be a thickened version of that realization.
\end{rmk}

\subsubsection{The Cell-like structure on Admissible Fat Graphs}
\label{cell_like_section}
We now use the quasi-cells described in the previous subsection to give a cell like structure on $\Fatad$.

\begin{dfn}
\label{cell_functor}
Let $G$ be a black and white graph of degree $n$. We will define a functor 
\[\varphi_g:\Ecat_G\to \Fat^n\]
Let $H$ denote the set of half edges of $G$ and $V_b$ the set of black vertices.  Choose an ordering of $V_b$, and for each $v\in V_b$ choose a start half edge.   Then we can describe $H$ as $H:=\amalg_{1\leq i\leq \vert V_b\vert} H_{i}$, where $H_{i}$ is the subset of half edges attached at the $i$-th vertex.  Note that the cyclic ordering and the start half edges give a total ordering of the sets $H_{i}$.  Let $v_{l_1}, v_{l_2}, \ldots v_{l_s}$ denote the generic white vertices of $G$ ordered by their labeling and $v_{j_1}, v_{j_2}, \ldots v_{j_t}$ denote the suspended white vertices of $G$ ordered by their labeling. Cut in half all the edges of $G$ and complete each half edge $h\in H_{i}$ to a leaf labeled by the label of $h$ in the total ordering of $H_{i}$.  This gives a disjoint union of corollas on black and white vertices and $m$ chords, where the chords correspond to the leaves of $G$.  Expand the white vertices to admissible cycles. This gives a tuple of graphs 
\[\alpha_G:=  ( T_{G_1},T_{G_2},\ldots T_{G_{\vert V_b \vert}}, \Gamma_{G_{l_1}}, \Gamma_{G_{l_2}},\ldots \Gamma_{G_{l_s}}, \Gamma_{G_{j_1}}, \Gamma_{G_{j_2}},\ldots \Gamma_{G_{j_t}})\]
where $T_{G_i}$ is the corolla corresponding to $i$-th black vertex, $\Gamma_{G_{l_i}}$ is $l_{\vert v_{l_i}\vert}$,  and $\Gamma_{G_{j_i}}$ is $\tilde{l}_{\vert v_{j_i}\vert}$.  Note that $\alpha_G$ is an object of $\Ecat_G$.  Let $(i,j)$ denote the $j$-th leave of the $i$-th graph of $\alpha_G$ and let $\lbrace (i_1,j_1), (i_2,j_2),\ldots (i_m,j_m) \rbrace$ be the leaves of $\alpha_G$ that correspond to leaves in $G$.  This procedure gives an involution 
\[\iota: \bigcup_{i,j} (i,j) - \bigcup_{l=1}^m(i_l,j_l)\to \bigcup_{i,j} (i,j) - \bigcup_{l=1}^m(i_l,j_l)\]
given by the involution in $H$ which attaches its half edges and a bijection 
\[g:\lbrace1,2,\ldots m\rbrace\to \bigcup_{l=1}^m(i_l,j_l)\]
given by the labeling of the leaves of G. Let 
\[\alpha:=  ( T_{1},T_{2},\ldots T_{{\vert V_b \vert}}, \Gamma_{{l_1}}, \Gamma_{{l_2}},\ldots \Gamma_{{l_s}}, \Gamma_{{j_1}}, \Gamma_{{j_2}},\ldots \Gamma_{{j_t}})\]
be an object in $\Ecat_G$.  Then we define $\varphi_G(\alpha)$ to be the graph obtained from $\alpha$ by gluing together the leaves of $\alpha$ according to $\iota$ and then forgetting the attaching vertex so that the graph obtained has inner vertices of valence at least $3$, and then label the remaining leaves of $\alpha$ according to $g$.  Notice that $\varphi_G(\alpha)$ has mixed degree at most $n$ and that $\varphi_G(\alpha_G)$ is the admissible fat graph obtained from $G$ by expanding its white vertices as shown in \ref{iso_bw_Fat3}.  The functor is naturally defined on morphisms since morphisms in $\Ecat_G$ and $\Fat^n$ are given by collapses of inner forests that do not contain any leaves.
\end{dfn}

\begin{lem}
\label{cover}
Let $\Fatadg$  and $\Fatng$ denote the full subcategories of $\Fatad$ and $\Fat^n$ on fat graphs of topological type $\Sg$.  Since the category $\Fatadg$ is finite, then there is an $N$ such that $\FatNg=\Fatadg$.  If $n\leq N$, $\Fatng$ is covered by quasi-cells of dimension $n$ i.e.,  $\bigcup_G\vert \mathrm{Im}(\varphi_G)\vert=\vert \Fat^n\vert$ where the union runs over all isomorphism classes of black and white graphs of degree $n$ and of topological type $\Sg$.
\end{lem}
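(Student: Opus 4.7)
The plan is to show that every $\Gamma \in \Fatng$ with $\md(\Gamma) = m \leq n$ lies in the image of $\varphi_G$ for some black and white graph $G$ of degree $n$ and topological type $\Sg$, by exhibiting such a pair $(G,\alpha)$ explicitly.

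As a preliminary reduction, let $\hat{\Gamma}$ denote the graph obtained from $\Gamma$ by pushing out the vertices of $\xi_\Gamma$ so as to make it essentially trivalent at the boundary. The collapse of the push-out forest is a morphism $\hat{\Gamma} \to \Gamma$ in $\Fatad$, and since $\varphi_G$ sends morphisms in $\Ecat_G$ to morphisms in $\Fatad$, producing $(G,\hat{\alpha})$ with $\varphi_G(\hat{\alpha}) = \hat{\Gamma}$ yields $(G,\alpha)$ with $\varphi_G(\alpha) = \Gamma$ via the corresponding morphism in $\Ecat_G$. I therefore assume from the outset that $\Gamma$ is essentially trivalent at the boundary, so that $\md(\Gamma) = \bwd(\Gamma) = m$. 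In this case, Remark \ref{iso_bw_Fat3} associates to $\Gamma$ a unique black and white graph $G_\Gamma$ of degree $m$, and Definition \ref{cell_functor} gives $\varphi_{G_\Gamma}(\alpha_{G_\Gamma}) = \Gamma$; if $m = n$, I am done by taking $G := G_\Gamma$.

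When $m < n$, the plan is to inflate $G_\Gamma$ through a chain of $n - m$ edge collapses, each increasing the degree by exactly $1$. A direct calculation from Definition \ref{bw_cpx_def} confirms this increment: collapsing a non-loop edge not joining two white vertices merges its endpoints into a single new vertex of valence $k_1 + k_2 - 2$, contributing one more than the sum of the two predecessors' contributions, whether the collapse is black-black (new black vertex contributes $k_1 + k_2 - 5$ instead of $(k_1 - 3) + (k_2 - 3)$) or black-white (new white vertex contributes $k_b + k_w - 3$ instead of $(k_b - 3) + (k_w - 1)$). Each such collapse $G' \to G''$ corresponds under the gluing of Definition \ref{cell_functor} to merging two pieces of the decomposition of $\Gamma$: two adjacent tree pieces fuse into a larger tree in $\Tcat_{\vert v\vert}$, or a tree piece is absorbed into an adjacent admissible-cycle-neighborhood, yielding an object of $\Lcat_{\vert v\vert}$ or $\tLcat_{\vert v\vert}$ obtained from the canonical piece by a blow-up away from the admissible cycle. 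Both kinds of merge produce valid objects of the relevant subcategories, and the identity $\varphi(\alpha') = \Gamma$ is preserved along the chain, so the final $\alpha \in \Ecat_G$ satisfies $\varphi_G(\alpha) = \Gamma$.

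The hard part will be showing that $G_\Gamma$ always admits such a chain of $n - m$ degree-increasing collapses of the permitted type. This is ensured by the topological hypothesis: the maximum degree of a black and white graph of topological type $\Sg$ equals $N$, the top dimension of $\Fatadg$; since $n \leq N$ and $m \leq n$, we have $n - m \leq N - m$, which is the length of a maximal chain of degree-increasing collapses starting from $G_\Gamma$ (degree $m$) and terminating at a maximally coarse black and white graph of degree $N$. Truncating such a chain at length $n - m$ yields the desired $G$ of degree $n$, which remains of topological type $\Sg$ because edge collapses preserve topological type, and this completes the construction.
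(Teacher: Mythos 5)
Your argument establishes (modulo some details discussed below) that every \emph{object} $[\Gamma]$ of $\Fatng$ lies in the image of some $\varphi_G$, but this does not prove the lemma. The claim is an equality of geometric realizations, $\bigcup_G \vert \mathrm{Im}(\varphi_G)\vert = \vert\Fat^n\vert$, and $\vert\Fat^n\vert$ is built from \emph{all} simplices of the nerve, not just the $0$-simplices. A point in the interior of a non-degenerate $k$-simplex $\xi = ([\Gamma_0]\to\cdots\to[\Gamma_k])$ lies in $\vert\mathrm{Im}(\varphi_G)\vert = \vert\varphi_G\vert(\vert\Ecat_G\vert)$ only if some chain in $\Ecat_G$ maps onto $\xi$ (or a degeneracy of it), and having each individual $[\Gamma_i]$ separately in the image of \emph{some} (possibly different) $\varphi_{G_i}$ is not enough. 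This is the crux of the lemma, and it is exactly what the cited paper addresses: after the object case it shows that a morphism $\psi_e\colon[\Gamma]\to[\Gamma/e]$ lies in the image of a single $\varphi_G$ (splitting into the cases $e$ on or off the admissible cycle, since these change the mixed degree in opposite directions), and then handles a general $k$-simplex by selecting a vertex $[\Gamma_l]$ of maximal degree in $\xi$ and running the object construction for $[\Gamma_l]$, checking that the entire chain then sits inside the corresponding quasi-cell.

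A secondary issue concerns your preliminary reduction via $\hat\Gamma$. For the implication ``$\hat\Gamma \in \mathrm{Im}(\varphi_G)$ implies $\Gamma \in \mathrm{Im}(\varphi_G)$'' you need the collapse $\hat\Gamma \to \Gamma$ of the push-out forest to be realized by a morphism \emph{inside} $\Ecat_G$, which fails if any push-out edge becomes an edge cut by the involution $\iota$ associated to $G$ (cut edges are never collapsed within $\Ecat_G$). Ensuring this requires choosing the $n-\md(\hat\Gamma)$ collapses from $G_{\hat\Gamma}$ to $G$ so as to absorb the whole push-out forest, which is possible but has to be argued. The paper sidesteps this entirely in its treatment of non-essentially-trivalent $\Gamma$: rather than pushing out away from the admissible cycle (which \emph{lowers} the mixed degree), it blows up \emph{at} the admissible boundary, which is mixed-degree-neutral, producing a $\tilde\Gamma$ at the same degree whose image under $\varphi_{G_{\tilde\Gamma}}$ obviously contains $\Gamma$ because the collapses happen entirely within individual $\Lcat$/$\tLcat$ factors.

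Finally, the assertion that $n\le N$ guarantees $n-m$ degree-raising edge collapses from $G_\Gamma$ — equivalently, that one can always enlarge the mixed degree up to $n$ by collapses away from the cycles and blow-ups at the cycles — is stated but not proved. The paper asserts this too, so it is not a point of divergence, but your specific justification (truncating ``a maximal chain of degree-increasing collapses terminating at degree $N$'') presupposes that such a chain starting at $G_\Gamma$ reaches degree $N$, which is itself what needs arguing.
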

\begin{proof}

In this proof, for a fat graph $[\Gamma]$, let $G_\Gamma$ denote its corresponding black and white graph as given in Remark \ref{iso_bw_Fat3}. Let $[\Gamma]$ be an object in $\Fatng$, we will show there is a $G$ of degree $n$ such that $[\Gamma]\in Im(\varphi_G)$.  If $[\Gamma]$ is an admissible fat graph of mixed degree $n$ which is essentially trivalent at the boundary, then $[\Gamma]\in Im(\varphi_{G_\Gamma})$. If $[\Gamma]$ is an admissible fat graph of mixed degree $k<n$ which is essentially trivalent at the boundary.  Then, since $n\leq N$, by collapsing edges that do not belong to the admissible cycles and blow ups at the admissible cycles of $[\Gamma]$, we can obtain an graph $[\tilde{\Gamma}]$ which is essentially trivalent at the boundary and of degree $n$. Thus, $[\Gamma]\in Im(\varphi_{G_{\tilde{\Gamma}}})$ and furthermore,  $Im(\varphi_{G_\Gamma})\subset Im(\varphi_{G_{\tilde{\Gamma}}})$.  Finally, assume $[\Gamma]$ is not essentially trivalent at the boundary. Note that collapsing an edge on a generic admissible boundary does not change the mixed degree of the graph.  Similarly, collapsing an edge on a suspended admissible boundary that does not contain the admissible leave does not change the mixed degree of the graph.  Equivalently, blow-ups at an admissible boundary that do not separate the admissible leave do not change the mixed degree of the graph.  Therefore, we can blow up $[\Gamma]$ at the admissible boundary to an admissible fat graph $[\tilde{\Gamma}]$ of degree at most $n$ which is essentially trivalent at the boundary.  Then $[\Gamma]\in Im(\varphi_{G_{\tilde{\Gamma}}})$ and we are done on objects by the argument above.

Now we show that given a morphism $\psi_e:[\Gamma]\to [\Gamma/e]$ in $\Fat^n$, then $\psi_e\in Im(\varphi_G)$  for some black an white graph $G$ of degree $n$.  If $e$ doe not belong to an admissible cycle, then $\md([\Gamma])< \md([\Gamma/e])$.  Then by the procedure described above, we can construct a graph $[\widetilde{\Gamma/e}]$ such that $\psi_e$ is a morphism in the image of $\Ecat_{G_{\widetilde{\Gamma/e}}}$.  Similarly, if $e$ is an edge on an admissible cycle then $\md([\Gamma])\geq \md([\Gamma/e])$ and thus there is a graph $[\tilde{\Gamma}]$ such that $\psi_e$ is a morphism in the image of $\Ecat_{G_{\tilde{\Gamma}}}$.  Similarly, for a general $k$-simplex $\xi:=[\Gamma_0]\to [\Gamma_1]\ldots\to [\Gamma_k]$, we choose a vertex of $\xi$ say $[\Gamma_l]$ such that it has maximum degree in $\xi$, this is not a unique choice. Then by the procedure described above, we can construct a graph $[\widetilde{\Gamma_l}]$ such that $\xi$ is contained in the image of $\Ecat_{G_{\widetilde{\Gamma_l}}}$.  
\end{proof}

\begin{rmk}
\label{cell_boundary}
Let $G$ be a black and white graph of degree $n$ and let $\Gamma_G$ be its corresponding admissible fat graph.  By remark \ref{boundary_ln}, for any $\tilde{G}$ in the differential of $G$ its corresponding admissible fat graph $\Gamma_{\tilde{G}}$ is obtained from $\Gamma_G$ by one of the following procedures:
\begin{itemize}
\item[-] A blow-up at a vertex that does not belong to an admissible cycle
\item[-] Collapsing consecutive edges on an admissible cycle that do not contain a trivalent vertex connected to the admissible leaf, and then making the graph essentially trivalent.
\item[-] Collapsing an edge on an admissible cycle that contains a trivalent vertex connected to the admissible leaf.
\end{itemize} 
Note then that each $\Gamma_{\tilde{G}}$ is an admissible fat graph of mixed degree $n-1$ which is essentially trivalent at the boundary which is obtained from $\Gamma$ by collapses at the admissible cycles and expansions away from the admissible cycles.  Notice moreover, that any graph $\Gamma'$ of mixed degree $k<n$ that is obtained from $\Gamma$ by collapses at the admissible cycles and expansions away from the admissible cycles can be obtained in this way from some $\Gamma_{\tilde{G}}$.  Therefore, the argument of the proof of the lemma above gives 
\[\vert \varphi_G(\partial\Ecat_G) \vert =
\bigcup
\vert \varphi_{\tilde{G}}(\Ecat_{\tilde{G}})
\vert
\]
where the union is taken over all $\tilde{G}$ in the differential of $G$.
\end{rmk}

We know that $\vert \Fat^n\vert$ is covered by quasi-cells of dimension $n$, now we want to show that they sit together nicely inside this space.  Recall that $\ie_G$ is the interior of the core of the quasi-cell $\Ecat_G$.

\begin{lem}
\label{disjoint_interior}
Let $G$ and $G'$ be different isomorphism classes of black and white graphs of degree $n$.  Then the following hold
\begin{itemize}
\item[-] The restriction $\varphi_G\vert_{\ie_G}: \ie_G\to \vert Fat^n\vert$ is injective
\item[-] The image of $\ie_G$ is disjoint from the image of $\ie_{G'}$ i.e.,  $\mathrm{Im}(\varphi_G\vert_{\ie_G})\cap\mathrm{Im}(\varphi_{G'}\vert_{\ie_{G'}})=\emptyset$
\end{itemize}
\end{lem}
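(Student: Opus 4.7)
The plan is to prove both statements simultaneously by constructing, from a metric admissible fat graph $\Gamma := \varphi_G(\alpha) \in \vert \Fatng \vert$ with $\alpha \in \ie_G$, a canonical reconstruction of both $G$ and $\alpha$; the two claims follow at once from the uniqueness of this reconstruction.

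First I unpack what a point of $\ie_G$ looks like concretely. By Lemma \ref{core_simplex} and Remark \ref{metric_nerve}, a point of $int(\vert \Cat_{\vert v\vert}\vert) \cong int(\Delta^{\vert v\vert - 1})$ is a metric admissible fat graph of underlying shape $l_{\vert v\vert}$ with strictly positive lengths on all admissible-cycle edges (summing to $1$), and analogously $int(\vert \tCat_{\vert v\vert}\vert)$ corresponds to $\tilde{l}_{\vert v\vert}$. For the tree factors, I claim that a point in the topological interior of the associahedron $\vert \Tcat_{\vert v\vert}\vert$ represents a metric tree in which every internal edge has length strictly less than $1$: such a point lies in the interior of some top-dimensional simplex of $N\Tcat_{\vert v\vert}$, which corresponds to a maximal chain ending at the corolla, and along such a chain every internal edge of the initial tree is eventually collapsed, contributing strictly positive weight to the value $0$ in the convex-combination metric.

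The image $\Gamma$ is obtained by gluing these factors via the involution $\iota$ of Definition \ref{cell_functor} and smoothing the resulting bivalent vertices, so its edges split into three classes with distinct metric signatures: (a) edges on admissible cycles of $\Gamma$, contributed by white-vertex factors; (b) edges produced by $\iota$, which correspond to edges of $G$ and, never being collapsed in any chain representing $\alpha$ (no morphism of $\Ecat_G$ collapses an edge of $G$, only edges internal to a factor), carry length exactly $1$; (c) internal edges of the tree factors $T_v$, which by the previous paragraph carry length strictly less than $1$. This trichotomy enables the following canonical reconstruction from $\Gamma$: the admissible cycles of $\Gamma$ together with the positions of their admissible leaves determine the white vertices of $G$, their valence, their generic/suspended type, and the corresponding factors of $\alpha$; after excising the admissible cycles, cutting the remaining subgraph $\Gamma'$ at the interiors of all its length-$1$ edges produces connected components in bijection with the black vertices of $G$, each component being precisely the tree factor $T_v$ with dangling half-edges where its leaves were glued; finally, the original gluing pattern of $\Gamma$ records which cut ends are identified, recovering both the edge structure of $G$ and the total ordering of half edges at each black vertex. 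Since both $G$ and $\alpha$ are functions of $\Gamma$ alone, we obtain injectivity of $\varphi_G\vert_{\ie_G}$ and $\mathrm{Im}(\varphi_G\vert_{\ie_G}) \cap \mathrm{Im}(\varphi_{G'}\vert_{\ie_{G'}}) = \emptyset$ for $G \neq G'$.

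The main technical obstacle is the sharpness of the length dichotomy on tree factors, i.e.\ verifying that no internal edge of any $T_v$ can accidentally carry length exactly $1$ when $\alpha \in \ie_G$. This reduces to checking that the topological interior of $\vert \Tcat_{\vert v\vert}\vert$ is covered by the interiors of top-dimensional simplices of its nerve whose maximal chains reach the corolla, so that no internal edge of the representative tree can persist (with length $1$) throughout the chain supporting the point. Once this is secured, the remaining steps are bookkeeping on how the construction of Definition \ref{cell_functor} distributes half-edges and propagates labels through the gluing involution $\iota$.
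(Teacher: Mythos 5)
Your approach is genuinely different from the paper's. The paper works entirely at the simplicial level: it uses the bijection of Remark~\ref{iso_bw_Fat3} to observe that an object $[\Gamma]\in\Fat^n$ of mixed degree $n$ which is essentially trivalent at the boundary has a unique black-and-white graph $G_\Gamma$ attached to it and a unique preimage object $\alpha_{G_\Gamma}$ in $\Ecat_{G_\Gamma}$, and then traces this uniqueness through $k$-simplices of the nerve whose carriers intersect the image of $\ie_G$. You instead work with the metric-graph model of Remark~\ref{metric_nerve} and extract a length trichotomy on the edges of $\Gamma=\varphi_G(\alpha)$ (admissible-cycle edges, glued edges of length exactly $1$, tree-internal edges of length $<1$), from which $G$ and $\alpha$ are reconstructed intrinsically. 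Both routes hinge on the same fact -- that an interior point of the core determines the black-and-white graph and the factor decomposition canonically -- but yours phrases it via metric invariants rather than via the nerve; this is arguably cleaner for reading off the disjointness claim in one stroke, at the cost of some care in justifying the length dichotomy and the assertion that glueing leaves always produces edges of length exactly $1$ in the induced metric (this is correct: glued edges, as concatenations of two leaves, are never collapsed along any chain in $N\Ecat_G$ and never lie on an admissible cycle, so $\lambda_i=1$ at every vertex of the chain).

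There is, however, one genuine slip in the paragraph you flag as the main technical point. You reduce the sharpness of the dichotomy to the assertion that $int(\vert\Tcat_{\vert v\vert}\vert)$ is covered by the \emph{interiors of top-dimensional simplices} of its nerve. This is false: the corolla itself is an interior point sitting on a $0$-simplex, and points on the interiors of non-maximal simplices $T'\to\cdots\to C$ with $T'$ not binary are interior but lie on no top-dimensional open simplex. The correct statement, which is what your argument actually needs, is that the \emph{carrier} of any interior point of $\vert\Tcat_{\vert v\vert}\vert$ has the corolla as a vertex (since $int(\vert\Tcat_n\vert)=\vert\Tcat_n\vert\setminus\vert\partial\Tcat_n\vert$ and $\vert\partial\Tcat_n\vert$ consists exactly of simplices with no corolla vertex), so the barycentric coordinate $s_C$ on the corolla is strictly positive; then for any internal edge $e$ one has $\lambda(e)\le 1-s_C<1$ because $\lambda_C(e)=0$. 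With this replacement the reduction goes through and the rest of the argument is sound. One further small point to spell out in a clean write-up: when you "excise the admissible cycles and cut at length-$1$ edges," you should distinguish leaf edges of $\Gamma$ (length $1$ but with a univalent endpoint) from glued edges (length $1$ with both endpoints at least trivalent); this is visible from the graph and does not threaten the reconstruction, but it is worth making explicit.
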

\begin{proof}
Note that the functor $\varphi_G:\Ecat_G\to \Fat^n$ is not necessarily injective on objects. Let $[\Gamma]$ be an object in $\Fat^n$ of mixed degree $n$ which is essentially trivalent at the boundary.  By the bijection of \ref{iso_bw_Fat3}, there is a unique black an white graph $G_\Gamma$ corresponding to $[\Gamma]$, and thus $[\Gamma]$ lies only on the image of $\Ecat_{G_\Gamma}$.  Moreover, there is a unique object of $\Ecat_{G_\Gamma}$ in the preimage of $[\Gamma]$, namely $\alpha_{G_\Gamma}$, where $\alpha_{G_\Gamma}$ is given by cutting edges of $G_\Gamma$ as given in definition \ref{cell_functor}.  Consider the map induced by $\varphi$ on the $k$-nerve of the core i.e., the map 
$N_k\varphi: N_k\overline{\ie}_G\longrightarrow N_k\Fat^n$ which sends
$\zeta:=(\alpha_0\to \ldots \to \alpha_k)\mapsto \xi:=([\Gamma_0]\to\ldots \to [\Gamma_k])$. If the simplex $\xi$ intersects the image of $\ie_G$, then there is an $l\leq k$ such that $[\Gamma_l]$ is essentially trivalent at the boundary and $\md([\Gamma_k])=n$.  This implies that $\alpha_l$ is in the interior of the core, and since the interior of the core is a disk, there is a unique simplex defined by $(\alpha_l\to \alpha_{l+1}\to\ldots \to \alpha_k)$ which maps to the simplex $([\Gamma_l]\to [\Gamma_{l+1}]\to\ldots \to [\Gamma_k])$. Moreover, the image of the simplex defined by $\alpha_0\to \alpha_1 \ldots \to \alpha_{l-1}$ does not intersect the image of $\ie_G$.  Therefore the map $\varphi_G\vert_{\ie_G}$ is injective. The image of $\ie_G$ is disjoint from the image of $\ie_{G'}$ for any $G'$ different than $G$ by the same argument.
\end{proof}
\begin{rmk}
\label{nice_intersection}
The functor $\varphi_G:\Ecat_G\to \Fat^n$  is not necessarily injective on objects.  If $\varphi_G$ is not injective on objects of mixed degree $k\leq n-1$, then $\vert \varphi_G(\Ecat_G)\vert$ is not injective at the boundary of the quasi-cell.  On the other hand if $\vert \mathrm{Im}(\varphi_G) \vert$ is not injective on the interior, then it must be so already at the boundary of the core i.e., there must be $\alpha_1$, $\alpha_2$ in $\Ecat_G$ such that $\varphi_G(\alpha_1)=\varphi_G(\alpha_2)$ and $\md(\varphi_G(\alpha_1))=n$. If this happens, then $\alpha_1$ and $\alpha_2$ are in a way symmetric, in the sense that they only differ from each other on the numbering of their leaves, since the same graph is obtained from both configurations by attaching their leaves through the functor $\varphi$.  Therefore, for each morphism in the thick boundary $\psi_{i_1}:\alpha_{i_1}\to\alpha_1$ in $\eth\Ecat_G$ there is exactly one morphism $\psi_{i_2}:\alpha_{i_2}\to\alpha_2$ in $\eth\Ecat_G$ such that $\varphi_G(\psi_{i_1})=\varphi_G(\psi_{i_2})$.  That is if $\vert \mathrm{Im}(\varphi_G) \vert$ is not injective on the interior, then it self intersects at vertices of the boundary of the core and simplicially on all simplices on the thick boundary containing such vertices.  The same argument show that if $\vert \mathrm{Im}(\varphi_G) \vert$  and $\vert \mathrm{Im}(\varphi_G') \vert$ intersect on their interior, then they intersect at vertices of the boundary of their cores and simplicially on all simplices on the thick boundary containing such vertices. 
\end{rmk}

The following theorem is originally proved (rationally) by Costello in \cite{costellorg,costellotcft} by very different methods.
\begin{thm}
The chain complex of black and white graphs is a model for the classifying spaces of mapping class groups of open-closed cobordisms.  More specifically there is an isomorphism
\[\mathrm{H}_*(\bwgraphs)\cong\mathrm{H}_*\left( \coprod_{\Sg}\mathrm{B}\Modgpq\right) \]
where the disjoint union runs over all topological types of open-closed cobordisms in which each connected component has at least one boundary component which is neither free nor outgoing closed. 
\end{thm}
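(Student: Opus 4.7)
The plan is to deduce the theorem from Theorem \ref{ad_oc} by showing that $\bwgraphs$ arises as the cellular chain complex of a CW-like structure on $|\Fatad|$, where the cells are exactly the quasi-cells $\Ecat_G$ indexed by isomorphism classes of black and white graphs. Working one topological type $S$ at a time, it suffices to show that $\mathrm{H}_*(\mathscr{BW}_S)\cong\mathrm{H}_*(|\Fatad_S|)$, since the right-hand side is $\mathrm{H}_*(\mathrm{B}\Mod(S))$ by Theorem \ref{ad_oc}.

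First I would establish the CW-like structure on $|\Fatad_S|$ using the filtration $\Fat^0_S\subset\Fat^1_S\subset\cdots\subset\Fat^N_S=\Fatad_S$. By Lemma \ref{cover}, $|\Fat^n_S|$ is covered by the images of the quasi-cell functors $\varphi_G:\Ecat_G\to\Fat^n_S$ as $G$ ranges over black and white graphs of degree $n$ and topological type $S$. By Corollary \ref{hom_eq_pairs}, each $|\Ecat_G|$ is homotopy equivalent as a pair to $(|\overline{\ie}_G|,|\partial\overline{\ie}_G|)$, and by Lemma \ref{core_simplex} combined with Remark \ref{Godin_asso}, the core $|\overline{\ie}_G|$ is homeomorphic to a disk $D^n$ with boundary sphere $S^{n-1}$. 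Lemma \ref{disjoint_interior} shows that the open cores embed disjointly into $|\Fat^n_S|$, and Remark \ref{cell_boundary} identifies the image of $|\partial\Ecat_G|$ inside $|\Fat^{n-1}_S|$. Remark \ref{nice_intersection} ensures that the potential non-injectivity of $\varphi_G$ occurs only on the thick boundary and in a simplicial way, so it can be absorbed by passing to a quotient compatible with the filtration.

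Next I would show that the subquotient $|\Fat^n_S|/|\Fat^{n-1}_S|$ is homotopy equivalent to a wedge of $n$-spheres indexed by isomorphism classes of black and white graphs of degree $n$ and of topological type $S$. The functor $\varphi_G$ restricted to the core induces a map of pairs $(D^n,S^{n-1})\to(|\Fat^n_S|,|\Fat^{n-1}_S|)$, and the collection of these maps assembles into a weak equivalence $\bigvee_{[G]} S^n \simeq |\Fat^n_S|/|\Fat^{n-1}_S|$. This gives a spectral sequence (or directly a cellular chain complex) $E^1_{n,0}=\bigoplus_{\mathrm{deg}(G)=n}\Z\langle G\rangle$ converging to $\mathrm{H}_*(|\Fatad_S|)$, where the generators are oriented isomorphism classes modulo orientation reversal (the orientation comes from identifying the core with an oriented disk, using Remark \ref{Godin_asso} and Lemma \ref{core_simplex}).

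Finally I would identify the cellular differential $d^1$ with the differential $d$ of $\bwgraphs$ from Definition \ref{bw_cpx_def}. This is the crux of the proof and the main obstacle: one must check that the attaching map of the $n$-cell indexed by $G$ onto the $(n-1)$-skeleton decomposes, up to sign, as a sum over pairs $(\tilde{G},e)$ with $G\in \tilde{G}/e$, and that each such pair contributes $\lfloor \tilde{G}\rfloor$ with the correct orientation prescribed by Definition \ref{underlying_BW_graph}. The geometric content is encoded in Remark \ref{cell_boundary}: the boundary of the core is tiled by the faces corresponding to single edge collapses in an admissible cycle of $\Gamma_G$ or to blow-ups at a single vertex not on the admissible cycles, which are in bijection with the blow-ups of $G$ as a black and white graph. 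The orientation bookkeeping must be matched with the one from Definition \ref{underlying_BW_graph}, which is why we introduced the underlying-graph operation in the first place (the $\lfloor\cdot\rfloor$ accounts for the suspended versus generic white vertex contributions appearing in $\tLcat$ versus $\Lcat$). Once this identification is in place, the cellular chain complex of $|\Fatad_S|$ agrees with $\mathscr{BW}_S$, giving $\mathrm{H}_*(\mathscr{BW}_S)\cong\mathrm{H}_*(|\Fatad_S|)\cong\mathrm{H}_*(\mathrm{B}\Mod(S))$ and hence the theorem after summing over topological types.
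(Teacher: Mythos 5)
Your proposal follows essentially the same route as the paper: the same filtration by mixed degree, the same quasi-cell decomposition via Lemma \ref{cover}, Corollary \ref{hom_eq_pairs}, Lemma \ref{disjoint_interior}, Remark \ref{nice_intersection} and Remark \ref{cell_boundary}, the same identification of the subquotients $|\Fat^n|/|\Fat^{n-1}|$ with wedges of spheres indexed by black and white graphs, and the same use of the filtration spectral sequence to conclude. This is the argument the paper gives.
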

\begin{proof}
It is enough to show that $\bwgraphs$ is a chain complex of $\vert \Fatad\vert$ since by \ref{ad_oc}, $\Fatad$ is a model for the classifying space of the mapping class group.  

We define a chain complex $\Cquasi_*$  using the filtration on $\Fatad$ given by the mixed degree of the graphs i.e., we define $\Cquasi_n:=H_n(\vert \Fat^n\vert, \vert \Fat^{n-1}\vert)$.   Since the quasi-cells of dimension $n$ cover $\Fat^n$ and their boundaries cover  $\Fat^{n-1}$ we have that 
\[H_*(\vert \Fat^n\vert,\vert \Fat^{n-1}\vert) =
H_*\left( \bigcup_G \vert \varphi_G(\Ecat_G)\vert,\bigcup_G \vert \varphi_G(\partial\Ecat_G)\vert\right)\]
Using Corollary \ref{hom_eq_pairs} we get a functor $\Pi_n:\amalg_G \Ecat_G\to \amalg_G\overline{\ie}_G$ that induces a homotopy equivalence of pairs

\[\vert \Pi_n\vert :(\amalg_G\vert \Ecat_G\vert, \amalg_G\vert \partial\Ecat\vert)\longrightarrow 
(\amalg_G\vert \overline{\ie}_G\vert, \amalg_G  \vert \partial\overline{\ie}_G\vert)\]

Recall that $\Pi_n$ is the identity on objects of the core.  Then, since the images of the quasi-cells intersect nicely on the thick boundary as mentioned in Remark \ref{nice_intersection}, the map $\vert \Pi_n\vert$ descends to a map 
\[\vert \pi_n\vert :(\bigcup_G\vert \varphi(\Ecat_G)\vert, \bigcup_G\vert \varphi(\partial\Ecat\vert))\longrightarrow
(\bigcup_G\vert \varphi(\overline{\ie}_G)\vert, \bigcup_G  \vert \varphi(\partial\overline{\ie}_G)\vert)\]
which is a homotopy equivalence of pairs. Since these are a CW pairs we have that

\[\tilde{H}_*(\bigcup_G\vert \varphi(\overline{\ie}_G)\vert, \bigcup_G  \vert \varphi(\partial\overline{\ie}_G)\vert) \cong
\tilde{H}_*\left( \frac{\bigcup_G\vert \varphi(\overline{\ie}_G)\vert}{
\bigcup_G  \vert \varphi(\partial\overline{\ie}_G)\vert}\right) 
\]

Recall that the interior of the associahedron and the cores are disks as stated in \ref{Godin_asso} and \ref{core_simplex}.  Therefore, the interior of the core of a quasi-cell $\ie_G$ is a open disk of dimension $n$ where $n$ is the degree of $G$ as a black and white graph.  Moreover, the image of the interiors of the cores of the quasi-cells are non-intersecting in $\Fat^n$ as described in Lemma \ref{disjoint_interior}.  Therefore,

\[\tilde{H}_*(\vert \Fat^n\vert,\vert \Fat^{n-1}\vert)\cong
\tilde{H}_*\left( \frac{\bigcup_G\vert \varphi(\overline{\ie}_G)\vert}{
\bigcup_G  \vert \varphi(\partial\overline{\ie}_G)\vert}\right) \cong
\tilde{H}_* (\vee_G S^n)\]

Thus, $\Cquasi_n$ is the free group generated by black and white graphs of degree $n$. The differential $\dquasi_n: H_n(\vert \Fat^n\vert, \vert \Fat^{n-1}\vert)\to H_{n-1}(\vert \Fat^{n-1}\vert, \vert \Fat^{n-2}\vert)$, is given by the connecting homomorphism of the long exact sequence of the triple $(\vert \Fat^n\vert, \vert \Fat^{n-1}\vert, \vert \Fat^{n-2}\vert)$.  We can show, see for example \cite{Godinunstable}, that a choice of orientation of a black and white graph corresponds to a compatible choice of orientations of the simplices that correspond to its quasi-cell.  Thus the differential takes a generator given by an $n$ dimensional quasi-cell, to its boundary in $\Fat^{n-1}$ and by \ref{cell_boundary} the boundary of a quasi-cell is given by the union of the quasi-cells corresponding to the differential of $G$.  So the chain complex $\Cquasi_*$ is the chain complex of black and white graphs $\bwgraphs$.  

On the other hand, the same argument that shows that cellular homology is isomorphic to singular homology, gives that $H_n(\Cquasi_*)\cong H_n(\vert \Fatad \vert)$ (cf. \cite[4.13]{spectral_guide}).  We give a brief sketch of this argument.  Consider the spectral sequence arising from the filtration of $\Fatad$.  The first page is given by $E^1_{p,q}=H_{p+q}(\vert \Fat^p\vert, \vert \Fat^{p-1}\vert)$.  Since the quotients in the filtration are wedges of spheres we have that
\[H_{p+q}(\vert \Fat^p\vert,\vert \Fat^{p-1}\vert)=\left\lbrace 
\begin{array}{lr}
\Cquasi_p & q=0\\
0 & q\neq 0
\end{array} \right. 
\]
Moreover the $d^1$ differential is given by the $\dquasi$ and thus by definition 
\[E^2_{p,q}=\left\lbrace 
\begin{array}{lr}
H_p(\Cquasi_*) & q=0\\
0 & q\neq 0
\end{array} \right. 
\]
Since all the terms of $E^2$ are concentrated on the row $q=0$ all higher differentials are trivial and $E^2_{p,q}=E^\infty_{p,q}$.  Finally, for this spectral sequence $E^\infty_{p,q}\cong H_p(\vert \Fatad\vert)$.  The easiest way to show that is by considering the argument in each connected component where $\Fatadg$ is a finite complex and thus the filtration is finite.
\end{proof}

\subsection{Gluing black and white Graphs}
The chain complex of black and white graphs was originally built from degenerate surfaces, and thus the gluing of surfaces along closed boundary components is not natural in this context.  The previous section gives a new point of view of black and white graphs, relating them directly to admissible fat graphs.  Furthermore, in Section \ref{gluing_ad_section} we give a topological map that models the gluing of surfaces in terms on admissible fat graphs.  We use both of these results to show that $\mathscr{BW}$-graphs is a model for the (positive-boundary) cobordism category.

The following chain map is defined in \cite{wahlwesterland}.
\begin{dfn}
Let $\mathscr{BW}_S$ denote the chain complex of black and white graphs of topological type $S$.  Let $S_1$ and $S_2$ be composable cobordisms.  We define a chain map
\[\circ_{BW}: \mathscr{BW}_{S_2} \otimes \mathscr{BW}_{S_1}\longrightarrow \mathscr{BW}_{S_2 \circ S_1}\]
Let $G_2\otimes G_1\in \mathscr{BW}_{S_2} \otimes \mathscr{BW}_{S_1}$ then 
\[G_2\circ_{BW} G_1:=\circ_{BW}(G_2\otimes G_1):=\sum \lfloor G\rfloor\]
where $\lfloor G\rfloor$ is the underlying black and white graphs of $G$ as defined in Definition \ref{underlying_BW_graph} and the sum runs over all graphs $G$ that can be obtained by the following procedure:
\begin{description}
\item[Closed gluing] \hspace*{2mm}
\begin{itemize}
\item Removing the white vertices of $G_1$ say $v_1,\ldots,v_{q_1}$.
\item Identifying the edge containing the start half edge of $v_i$ to the edge connected to $x_i$ the $i$-th closed leave of $G_2$. 
\item attaching the remaining edges incident at $v_i$ to the boundary cycle of $x_i$ respecting the cyclic ordering
\end{itemize}
\item[Open gluing] Attaching the $i$-th outgoing open leaf of $G_1$ to the $i$-th incoming open leave of $G_2$ to form a new edge for all $i=1,\ldots,q_2$.
\end{description}
\end{dfn}

\begin{rmk}
The open gluing was previously defined in \cite{costellotcft} where it was also shown that this gluing corresponds to gluing of surfaces along open boundary components in moduli space.  
\end{rmk}

\begin{thm}
Let $S_1$ and $S_2$ be composable cobordisms.  If the composite $S_2\circ S_1$ is an oriented cobordism in which each connected component has a boundary component which is neither free nor outgoing closed, then the chain map $\circ_{BW}$ is a chain model of the topological map 
\[\mathrm{BMod}(S_2) \times \mathrm{BMod}(S_1) \longrightarrow \mathrm{BMod}(S_2 \circ S_1)\]
This composition is associative.  Therefore $\mathscr{BW}$-graphs is a model for the $2$d-cobordism category.
\end{thm}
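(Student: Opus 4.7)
The plan is to compare the combinatorial gluing $\circ_{BW}$ with the topological gluing of admissible fat graphs from Construction~\ref{gluing_ad}, using the cell-like filtration established in the proof of Theorem~\ref{thmB}. That $\circ_{BW}$ is a well-defined chain map and is strictly associative is already proved in \cite{wahlwesterland}; what remains is to show that $\circ_{BW}$ induces on homology the map coming from the topological composition (\ref{gluing_BMod}) of classifying spaces of mapping class groups.

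First I would match the two gluings on generators. Given black and white graphs $G_1,G_2$, let $\Gamma_{G_1},\Gamma_{G_2}$ be the essentially trivalent admissible fat graphs corresponding to them via the bijection of Remark~\ref{iso_bw_Fat3}. I would equip them with metrics so that for each $i$ the admissible cycle of $\Gamma_{G_1}$ dual to the outgoing white vertex $v_i$ has total length equal to the total length of the incoming boundary cycle of $\Gamma_{G_2}$ associated with the $i$-th incoming closed leaf $x_i$. This is possible precisely because $\Gamma_{G_1}$ is admissible, so its closed boundary cycles are disjoint embedded circles that can be scaled independently. With these metrics no further rescaling occurs in Construction~\ref{gluing_ad}, and the glued metric admissible fat graph $\Gamma_{G_2}\circ\Gamma_{G_1}$ is obtained by identifying each admissible cycle of $\Gamma_{G_1}$ with its matching boundary cycle of $\Gamma_{G_2}$, distributing the edges incident to $v_i$ along the boundary cycle of $x_i$ while respecting the cyclic order; gluing open leaves and passing to the underlying admissible fat graph (via Definition~\ref{underlying_BW_graph}) then yields precisely one summand of $G_2\circ_{BW}G_1$ for each admissible choice of placement of the start half edge of $v_i$ against $x_i$'s boundary cycle.

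Second, I would upgrade this identification to chain complexes. By the cell-like filtration of Section~\ref{cell_like_section}, the complex $\mathscr{BW}_S$ is canonically isomorphic to the cellular-type complex $\Cquasi_*(|\Fatad_S|)$, whose generators are the quasi-cells indexed by essentially trivalent admissible fat graphs. I would verify that the topological composition of Construction~\ref{gluing_ad} sends the product of open quasi-cells $\ie_{G_1}\times\ie_{G_2}$ into the union of the open quasi-cells $\ie_G$ with $G$ appearing in $G_2\circ_{BW}G_1$, up to a cellular homotopy absorbed by the cellular approximation theorem. Passing to associated gradeds, the induced map on $\Cquasi_*$ agrees with $\circ_{BW}$ on generators, and Theorem~\ref{admissible_composition} then gives that $\circ_{BW}$ models the composition on $\mathrm{BMod}$. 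Strict associativity is purely combinatorial: since $G_2\circ_{BW}G_1$ depends only on the local redistribution of edges around boundary cycles of closed leaves, one verifies $(G_3\circ_{BW}G_2)\circ_{BW}G_1=G_3\circ_{BW}(G_2\circ_{BW}G_1)$ by inspection, recovering the associativity proved in \cite{wahlwesterland}.

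The hardest part will be the matching of lower-dimensional boundary strata of the quasi-cells under gluing. The topological composition becomes strictly combinatorial only after the canonical metric is chosen, and for graphs with suspended white vertices, with vertices of high valence on the admissible cycles, or whose BW gluing produces unlabeled leaves that must then be contracted, one must check that the various summands appearing in $G_2\circ_{BW}G_1$ correspond, with correct multiplicities and orientations, to the images of the boundary of a single product of quasi-cells. This will require a careful case analysis in the spirit of Lemma~\ref{gluing_closed}, together with a systematic tracking of orientations through the underlying-graph operation of Definition~\ref{underlying_BW_graph}.
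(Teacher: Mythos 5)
Your proposal follows essentially the same strategy as the paper: invoke \cite{wahlwesterland} for the facts that $\circ_{BW}$ is a chain map and is strictly associative, use Construction \ref{gluing_ad} and Theorem \ref{admissible_composition} for the topological model of composition, and then compare the two via the quasi-cell structure of Section \ref{cell_like_section}, showing that the image of a product of quasi-cells under $\circ_{\Fat}$ is precisely the union of the quasi-cells indexed by the summands of $G_2\circ_{BW}G_1$. The paper's proof is terser at exactly the point where you flag the difficulty: it asserts that the matching of quasi-cell images ``holds by inspection'' and sketches only that, as the metrics range over the product quasi-cell, the composites sweep out all the relevant admissible fat graphs with all their metrics; it does not carry out the boundary-stratum or orientation bookkeeping you identify as the hardest part. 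So your concern is well placed as a gap in rigor, but it is a gap you would share with the paper rather than a missing idea. One small inefficiency in your write-up: the invocation of the cellular approximation theorem is unnecessary if the image-of-quasi-cells claim is literal, which is what the paper asserts and what your plan in fact establishes; you do not need to pass through a cellular approximation if you prove directly that $\mathrm{Im}(\circ_{G_2,G_1})=\bigcup_{\lfloor G\rfloor}\varphi_{\lfloor G\rfloor}(\Ecat_{\lfloor G\rfloor})$.
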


\begin{proof}
In \cite{wahlwesterland} it is shown that this is a chain map and that when interpreted as composition of graphs it is associative.  On the other hand, in Construction \ref{gluing_ad} we define a map 
\begin{equation*}
\circ_{\Fat}:|\Fatad_{S_2}| \times |\Fatad_{S_1}| \longrightarrow |\Fatad_{S_2 \circ S_1}|
\end{equation*}
which models the map on $\text{BMod}(S)$.  In Section \ref{cell_like_section} we give a cell-like structure on $|\Fatad|$ where the quasi-cells are indexed by black and white graphs.  Therefore, an element
$G_2\otimes G_1\in \mathscr{BW}_{S_2} \otimes \mathscr{BW}_{S_1}$ represents a product of quasi cells 
\[\varphi_{G_2}(\Ecat_{G_2})\times\varphi_{G_1}(\Ecat_{G_1})\subset |\Fatad_{S_2}| \times |\Fatad_{S_1}|.\] 
Consider the restriction of $\circ_{\Fat}$ to 
\[\circ_{G_2,G_1}:
\varphi_{G_2}(\Ecat_{G_2})\times\varphi_{G_1}(\Ecat_{G_1})
\longrightarrow 
|\Fatad_{S_2 \circ S_1}|\]
It is enough to show that 
\[\text{Im}(\circ_{G_2,G_1})=\bigcup_{\lfloor G\rfloor} \varphi_{\lfloor G\rfloor}(\Ecat_{\lfloor G\rfloor})
\]
where the union is taken over all ${\lfloor G\rfloor}$ such that $G_2\circ_{BW} G_1=\sum{\lfloor G\rfloor}$.  This holds by inspection. To see this recall that a quasi-cell in $\Fatad$ corresponding to a black and white graph $G$ is the subcategory on objects obtained from $\Gamma_G$ (the admissible graph corresponding to $G$ which is essentially trivalent at the boundary) by collapses on the admissible boundaries and expansions away from the admissible boundaries.  Thus in terms of metric graphs, the quasi-cell of $G$ is the subspace of graphs obtained by the above procedure, with all their possible metrics.  When we compose point-wise, each pair of metrics gives a priori different admissible fat graphs.  And the image is the union over all possible admissible fat graphs obtained in this way together with all their possible metrics.
\end{proof}

\bibliographystyle{amsalpha}
\bibliography{stringtopology}
\end{document}